\newtheorem{theorem}{Theorem}[section]
\newtheorem{corollary}{Corollary}[section]
\newtheorem{lemma}{Lemma}[section]
\newtheorem{conjecture}{Conjecture}[section]
\newtheorem{remark}{Remark}[section]
\newtheorem{definition}{Definition}[section]
\newtheorem{proposition}{Proposition}[section]
\def \a{\alpha }
\def \b {\beta}
\def \l{\lambda }
\newcommand{\oa} {\bar{\alpha} }
\begin{document}

\newcommand{\wta}{{\rm {wt} }  a }
\newcommand{\R}{\Bbb  R}

\newcommand{\wtb}{{\rm {wt} }  b }
\newcommand{\bea}{\begin{eqnarray}}
\newcommand{\eea}{\end{eqnarray}}
\newcommand{\be}{\begin {equation}}
\newcommand{\ee}{\end{equation}}
\newcommand{\g}{\frak g}
\newcommand{\hg}{\hat {\frak g} }
\newcommand{\hn}{\hat {\frak n} }
\newcommand{\h}{\frak h}
\newcommand{\V}{\Cal V}
\newcommand{\hh}{\hat {\frak h} }
\newcommand{\n}{\frak n}
\newcommand{\Z}{\Bbb Z}
\newcommand{\N}{{\Bbb Z} _{> 0} }
\newcommand{\Zp} {\Z _ {\ge 0} }
\newcommand{\Hp}{\bar H}
\newcommand{\C}{\Bbb C}
\newcommand{\Q}{\Bbb Q}
\newcommand{\1}{\bf 1}
\newcommand{\la}{\langle}
\newcommand{\ra}{\rangle}
\newcommand{\NS}{\bf{ns} }

\newcommand{\wt}{{\rm {wt} }   }

\newcommand{\E}{\mathcal E}
\newcommand{\F}{\mathcal F}
\newcommand{\X}{\bar X}
\newcommand{\Y}{\bar Y}

\newcommand{\hf}{\mbox{$\tfrac{1}{2}$}}
\newcommand{\thf}{\mbox{$\tfrac{3}{2}$}}

\newcommand{\W}{\mathcal{W}}
\newcommand{\non}{\nonumber}
\def \l {\lambda}
\baselineskip=14pt
\newenvironment{demo}[1]%
{\vskip-\lastskip\medskip
  \noindent
  {\em #1.}\enspace
  }%
{\qed\par\medskip
  }

\def \l {\lambda}
\def \a {\alpha}

\keywords{vertex superalgebras, affine Lie algebras, admissible representations, $N=4$ superconformal algebra, logarithmic conformal field theory}
\title[]{
  A realization of certain modules for the $N=4$ superconformal algebra and the   affine Lie algebra $A_2 ^{(1) }$}

\thanks{This work has been fully supported by Croatian Science Foundation under the project   2634 "Algebraic and combinatorial methods in vertex algebra theory"  }
  \subjclass[2000]{
Primary 17B69, Secondary 17B67, 17B68, 81R10}
\author{ Dra\v zen Adamovi\' c }

\date{}
\curraddr{Department of Mathematics, University of Zagreb,
Bijeni\v cka 30, 10 000 Zagreb, Croatia} \email {adamovic@math.hr}
\markboth{Dra\v zen Adamovi\' c} { }
\bibliographystyle{amsalpha}
\pagestyle{myheadings}

  \maketitle

\begin{abstract}
We shall first present an explicit realization of the simple  $N=4$ superconformal vertex algebra $L_{c} ^{N=4}$ with central charge $c=-9$. This vertex superalgebra is realized inside of  the $ b c  \beta \gamma $ system  and contains a  subalgebra isomorphic to  the  simple affine vertex algebra $L_{A_1} (- \tfrac{3}{2} \Lambda_0)$. Then we construct a functor from the category of $L_{c} ^{N=4}$--modules with $c=-9$ to the category of modules for the admissible affine vertex algebra $L_{A_{2} } (-\tfrac{3}{2} \Lambda_0)$. By using this construction we construct a family of weight and logarithmic modules for  $L_{c} ^{N=4}$ and  $L_{A_{2}  } (-\tfrac{3}{2} \Lambda_0)$. We also show that a coset subalgebra of $L_{A_{2}  } (-\tfrac{3}{2} \Lambda_0)$ is an logarithmic  extension of the $W(2,3)$--algebra with $c=-10$. We discuss some generalizations of our construction based on the extension of affine  vertex algebra $L_{A_1} (k \Lambda_0)$ such that $k+2 = 1/p$ and $p$ is a positive integer.
\end{abstract}

\section{Introduction}

In this paper we explicitly construct certain  simple vertex algebras associated to the $N=4$ superconformal Lie algebra and the affine Lie algebra $A_2^{(1)}$ and apply this construction  in the representation theory of  vertex algebras. We demonstrate that these vertex algebras have interesting representation theories which include finitely many irreducible modules in the category $\mathcal{O}$, infinite series of weight irreducible modules and series of logarithmic representations. We will also show that these vertex algebras are connected with logarithmic conformal field theory obtained using logarithmic extension of affine $A_1 ^{(1)}$--vertex algebras and higher rank generalizations of triplet vertex algebras.

The $N=4$ superconformal  algebra appeared in the classification of simple formal distribution Lie
superalgebras which admit a central extension containing a Virasoro subalgebra
with a non-trivial center (cf. \cite{K}, \cite{FK}). It is realized   by using quantum reduction  of affine Lie superalgebras (cf. \cite{KW2}, \cite{KWR}, \cite{Ar1}). The free-fields realization of the universal vertex algebra associated to $N=4$  superconformal algebra appeared in \cite{KW2}. In this paper we shall realize the simple $N=4$ superconformal vertex algebra $L_{c} ^{N=4}$  with central charge $c=-9$. It appears that for this central charge the simple affine vertex algebra $L_{A_1} (-\frac{3}{2} \Lambda_0)$ is conformaly embedded into vertex superalgebra $L_{c} ^{N=4}$. Moreover, the Wakimoto module for $L_{A_1} (-\frac{3}{2} \Lambda_0)$ is realized inside of vertex superalgebra $M \otimes F$, where $M$ is a Weyl vertex algebra and $F$ is a Clifford vertex superalgebra. We prove that  $L_{c} ^{N=4}$ with $c=-9$ is isomorphic to the maximal $sl_2$--integrable submodule of $M \otimes F$. In physics terminology vertex superalgebra $M \otimes F$ is called the $\beta \gamma b c$ system. So $N=4 $ superconformal vertex superalgebra is realized as a subalgebra of the $\beta \gamma b c$ system. We classify irreducible $L_{c} ^{N=4}$--modules in the category of $\tfrac{1}{2} {\Zp}$--graded modules. It turns out that our classification is similar to that of  \cite{AM}. Vertex operator superalgebra $L_{c} ^{N=4}$ has finitely many irreducible modules in the category $\mathcal{O}$ (in fact only two) and infinitely many irreducible modules in the category of weight modules. Only the category of $\tfrac{1}{2}{\Zp}$--modules with finite-dimensional weight spaces is semi-simple. By applying the construction from \cite{AdM-2009}, we construct a family of logarithmic modules for $L_{c} ^{N=4}$. Next we show that the simple affine vertex operator algebra $L_{A_{2}  } (-\tfrac{3}{2} \Lambda_0)$ can be realized as a subalgebra of  $L_{c} ^{N=4} \otimes F_{-1}$. This construction is similar to that of \cite{A-2007} where affine vertex algebra of critical level for $\widehat{sl_2}$ was realized on the tensor product of a vertex superalgebra $\mathcal{V}$ and $F_{-1}$.

As in \cite{A-2007}, we construct a family of functors $\mathcal{L}_s$ which map  (twisted) $L_{c} ^{N=4}$--modules to untwisted   $L_{A_{2}  } (-\tfrac{3}{2} \Lambda_0)$--modules. As a consequence, we construct a family of irreducible weight  $L_{A_{2}  } (-\tfrac{3}{2} \Lambda_0)$--modules and logarithmic modules.

In \cite{A-JPAA}, we presented an explicit realization of the affine vertex algebras  $L_{A_{1}  } (-\tfrac{4}{3} \Lambda_0)$ and demonstrated that this vertex operator algebra is related with triplet algebra $W(p)$ with $p=3$ (cf. \cite{AdM-08}). A connection between $L_{A_1} (-\frac{1}{2}\Lambda_0)$ and triplet algebra $W(2)$ was studied in \cite{R}. Our present construction is also related with  $\mathcal{W}$--algebras appearing in logarithmic conformal field theory. In particular, we show that the parafermion vertex  subalgebra  $K(sl_3,-\tfrac{3}{2})$ of $L_{A_{2}  } (-\tfrac{3}{2} \Lambda_0)$ is an extension of $(1,2)$--model for the $W(2,3)$-algebras. Moreover, vacuum subspace  of  $L_{A_{2}  } (-\tfrac{3}{2} \Lambda_0)$ contains vertex algebra $W_{A_2}(p)$ with $p=2$ investigated by A. M. Semikhatov in \cite{S1} (see also \cite{AdM-peking}).

For every $p \ge 3$, we also introduce  vertex algebra ${\mathcal V} ^{(p)}$ which generalize the $N=4$ superconformal vertex algebra with $c=-9$ and vertex algebra $\mathcal{R}^{(p)}$ which generalize simple affine vertex algebra $L_{A_{2}  } (-\tfrac{3}{2} \Lambda_0)$ .

In our forthcoming publications we shall study fusion rules for modules constructed in this paper.

\noindent {\bf Acknowledgment:}  This work was done in part during the author stay at Erwin Schr$\ddot{\mbox{o}}$dinger Institute in Vienna in March, 2014. We would like to thank the organizer of the program "Modern trends in topological quantum field theory" for invitation. We also thank to A. Milas, O. Per\v se, D. Ridout, T. Creutzig, A. Semikhatov and S. Woods for recent discussions on vertex algebras and logarithmic conformal field theory.

\section{Preliminaries}
\label{prel}
  In this section we recall the
definition of vertex   superalgebras, their twisted modules (cf.
\cite{FHL}, \cite{FLM}, \cite{K},  \cite{LL}).

Let $(V=  V _{\bar 0} \oplus V _{\bar 1},Y, {\bf 1}, \omega)$ be a vertex operator superalgebra. We
shall always assume that
\bea
&&V_{\bar 0}=\coprod_{ n \in {\Zp} } V(n), \quad V _{\bar 1} = \coprod_{ n \in \tfrac{1}{2} +{\Zp} } V(n) \nonumber \\
&&\mbox{where} \ \  V(n) = \{ a \in V \ \vert \ L(0) a = n v \}. \nonumber \eea
For $a \in V(n)$, we shall write $\wt (a) = n$, or ${\rm deg}(a)=n$.
As usual, vertex operator associated to $a \in V$ is denoted by $Y(a,x)$, with the mode expansion
$$Y(a,x)=\sum_{n \in \mathbb{Z}} a_n x^{-n-1}.$$

Any element $u \in V _{\bar 0} $ (resp. $u \in
V _{\bar 1})$ is said to be even (resp. odd). We define $\vert u
\vert = 0 $ if $u$ is even and $\vert u \vert = 1$ if $u$ is odd.
Elements in  $V _{\bar 0}$ or  $V _{\bar 1}$ are called
homogeneous. Whenever $\vert u \vert $ is written, it is
understood that $u$ is homogeneous.

Let $\sigma$ be canonical automorphism of $V$ of order two.

Assume now that $g$ is an automorphism of the vertex superalgebra
$V$ such that $ g $ acts semisimply on $V$ and
\bea \label{svojstvo-automorfizma}
&&   V=\oplus _{ \bar{\a} \in \Gamma/{\Z} }   V ^{\bar{\a}}  \label{aut-1} \\
 && g  v = e^{2 \pi i {\a}} v, \ \ \mbox{for} \ \ v \in V^{\bar{\a}} \label{aut-2}
\eea
where $\Gamma$ is an additive subgroup of ${\R}$ containing ${\Z}$ and ${\oa}= {\a} + {\Z}$. We will always assume that $0 \le \alpha < 1$.

\begin{definition}
Let  $g$ be an automorphism of $V$  be such that
(\ref{aut-1}) and (\ref{aut-2})  hold. A  weak $g$--twisted $V$--module is a pair
$(M, Y_M)$, where $M = M _{\bar 0} \oplus M _{\bar 1}$ is a
${\Z}_2$--graded vector space, and $Y_M (\cdot, z)$  is a linear
map
$$ Y_M : V \rightarrow \mbox{End} (M)\{ z \}, \ a \mapsto
Y_M(a,z) = \sum_{ n \in \Gamma} a_n z ^{-n-1}, $$
satisfying the following conditions for $a, b \in V$ and $v \in
M$:

\begin{enumerate}

\item [(M1)]  $\vert a_n v\vert$ = $\vert a\vert + \vert v\vert$
for any $a \in V$.
 \item [(M2)]  $Y_M({\bf 1},z)=I_M$.
\item[ (M3)]$Y_M(D a,z)=\frac{d}{dz}Y_M(a,z)$.
\item[(M4)] $a_n v = 0$ for $n \gg 0$.
\item[(M5)] $Y_M(a,z) = \sum_{ n \in {\oa} } a_n z^{-n-1}$  for $a
\in  V ^{\bar{\a}} $.

\item[(M6)] The twisted Jacobi identity holds

\begin{eqnarray}
&
&z_{0}^{-1}\delta\left(\frac{z_{1}-z_{2}}{z_{0}}\right)Y_M(a,z_{1})Y_M(b,z_{2})
 -(-1) ^{\vert a \vert \vert b \vert}
z_{0}^{-1}\delta\left(\frac{z_{2}-z_{1}}{-z_{0}}\right)
Y_M(b,z_{2})Y_M(a,z_{1})\nonumber \\
&=&z_{2}^{-1} \left(\frac{z_{1}-z_{0}}{z_{2}}\right) ^{-\a}
\delta\left(\frac{z_{1}-z_{0}}{z_{2}}\right)Y_M(Y(a,z_{0})b,z_{2}).
\nonumber
\end{eqnarray}
for $ a \in V^{\bar{\a}}$, $b \in V$.

\end{enumerate}
\end{definition}

\begin{definition}
An admissible $g$--twisted $V$--module is a weak $g$--twisted $V$--module $M$ with a grading of the form
$$ M = M(0) \bigoplus \bigoplus_{ \mu  > 0 } M(\mu) $$
such that
$M(0) \ne 0$ and for any homogeneous  $a \in V$, $n \in {\Gamma}$ we have
$$ a_n  M(\mu) \subset M( \mu + \wt  (a)  - n  -1). $$
\end{definition}

In the case $g = 1$, $M$ is called (untwisted) $V$--module.

 Now we shall recall one important construction of twisted modules.
Let now $ h \in V$ such that
$$L(n) h =\delta_{n,0} h, \ \ h(n) h = \delta_{n,1} \gamma {\1} $$
for any $n \in {\Zp}$, where $\gamma$ is a fixed complex number.
Assume that $h(0)$ acts semisimply on $V$. For simplicity we assume that $h(0)$ has real eigenvalues. Let $\Gamma$ be an
additive subgroup of ${\R}$ generated by $1$  and the eigenvalues
of $h(0)$. Then $g_h= e^{2 \pi i h(0)}$ is an automorphism of $V$
and (\ref{aut-1})-(\ref{aut-2})  hold. $g_h$ has finite order if and only if the action of $h(0)$ on $V$ has only rational eigenvalues.

\begin{remark}
In general the automorphism $g$ can have infinite order, and group
${\Gamma}/ {\Z}$ is infinite. Using slightly different
terminology, $g$--twisted modules for ${\Gamma}/{\Z}$--graded
vertex superalgebras  were defined and investigated in \cite{KR}.
\end{remark}

Define $$\Delta (h, z ) = z^{ h(0)} \exp \left(\sum_{ n=1}
^{\infty} \frac{h(n)}{-n} (-z)^{-n}\right).$$
Applying  the results obtained in  \cite{Li5} on
$V$--modules we get the following proposition.

\begin{proposition} \label{novi-moduli}
%
For any weak $V$--module $(M,Y_M(\cdot,
 z))$,
$$ (\widetilde{M}, \widetilde{Y}_{\widetilde{M} } (\cdot,z)):=(M, Y_M (\Delta (h, z) \cdot, z))$$
is a $g_h$--twisted weak $V$--module. $\widetilde{M}$ is an irreducible
twisted $V$--module if and only if
  $M$ is   an irreducible  $V$--module.
\end{proposition}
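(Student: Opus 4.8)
The plan is to invoke the general theory of Li's $\Delta$-operator construction from \cite{Li5}, which says precisely that conjugating the vertex operator map by $\Delta(h,z)$ converts an (untwisted) weak $V$-module into a $g_h$-twisted weak $V$-module, provided $h$ satisfies the conditions $L(n)h = \delta_{n,0}h$ and $h(n)h = \delta_{n,1}\gamma\mathbf{1}$ for $n \in \Zp$. Since these are exactly the hypotheses we have imposed on $h$ just above the statement, the first part of the proposition is not really something to reprove but rather something to cite, after checking that the axioms (M1)--(M6) of a weak $g_h$-twisted module are the ones verified in \cite{Li5}. The one point worth spelling out is why the grading group is $\Gamma/\Z$ with $\Gamma$ generated by $1$ and the eigenvalues of $h(0)$: this is because $\Delta(h,z)$ introduces the factor $z^{h(0)}$, so that on a vector in $V$ of $h(0)$-eigenvalue $\alpha$ the modes of $\widetilde{Y}_{\widetilde{M}}$ land in $\alpha + \Z$, matching axiom (M5).

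For the second statement --- irreducibility of $\widetilde{M}$ if and only if irreducibility of $M$ --- the key observation is that the operator $\Delta(h,z)$ is invertible: its inverse is $\Delta(h,z)^{-1} = \exp\!\left(\sum_{n=1}^{\infty}\frac{h(n)}{n}(-z)^{-n}\right) z^{-h(0)}$, interpreted appropriately as an operator-valued formal series. Because $\widetilde{Y}_{\widetilde{M}}(a,z) = Y_M(\Delta(h,z)a, z)$, the two module structures have the same underlying space $M$, and a subspace $N \subseteq M$ is a $\widetilde{Y}_{\widetilde{M}}$-submodule precisely when it is a $Y_M$-submodule. Indeed, if $N$ is stable under all $a_n$ coming from $Y_M$, then since $\Delta(h,z)a$ is again a formal combination (with coefficients in $\C[z,z^{-1}]$) of elements of $V$, the modes of $\widetilde{Y}_{\widetilde{M}}(a,z)$ are formal combinations of the operators coming from $Y_M$, hence preserve $N$; conversely one uses the inverse operator $\Delta(h,z)^{-1}$ in the same way to recover $Y_M(b,z) = \widetilde{Y}_{\widetilde{M}}(\Delta(h,z)^{-1}b, z)$. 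Thus the lattices of submodules of $(M, Y_M)$ and of $(\widetilde{M}, \widetilde{Y}_{\widetilde{M}})$ coincide, and in particular one is irreducible if and only if the other is.

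I expect the main (and really only) subtlety to be the bookkeeping for where $\Delta(h,z)$ and $\Delta(h,z)^{-1}$ are well-defined as operators: one must check that applied to any fixed $a \in V$ these produce only finitely many negative powers of $z$ (this uses that $h(n)a = 0$ for $n$ sufficiently large, which holds since $V$ is a vertex algebra and by the grading restrictions), so that composing with $Y_M(\cdot, z)$ and extracting a mode $a_n$ yields a well-defined, and in fact locally finite, operator on $M$. Once that is in hand, the argument that submodules correspond is a formal manipulation. Since all of this is carried out in \cite{Li5}, the cleanest presentation is: cite \cite{Li5} for the twisted-module structure, then give the short invertibility argument for the equivalence of irreducibility, which is what I would write out in the proof.
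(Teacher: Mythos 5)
Your proposal is correct and matches the paper's treatment: the paper offers no written proof, simply stating that the proposition follows by applying the results of \cite{Li5}, which is exactly the citation you make. The additional details you supply (the grading via $z^{h(0)}$ and the invertibility of $\Delta(h,z)$ giving the correspondence of submodules, hence the equivalence of irreducibility) are the standard arguments from \cite{Li5} and are consistent with what the paper implicitly relies on.
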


Let us recall the definition of Zhu's algebra for vertex operator
superalgebras.

 Let $g$ be an automorphism of the vertex operator superalgebra $V$ such that (\ref{aut-1})- (\ref{aut-2}) hold. Assume also  that  for the automorphism  $  g \sigma  $ we have
\bea \label{svojstvo-automorfizma-2}
&&   V=\oplus _{ \bar{\a} \in \widetilde{\Gamma}/{\Z} }   V ^{\bar{\a}}  \label{aut-3} \\
 &&  g \sigma  v = e^{2 \pi i {\a}} v, \ \ \mbox{for} \ \ v \in V^{\bar{\a}} \label{aut-4}
\eea
where $\widetilde{\Gamma}$ is an additive subgroup of ${\R}$ containing ${\Z}$ and ${\oa}= {\a} + {\Z}$ where $0 \le \alpha < 1$.

Now we recall definition of twisted Zhu's algebra for vertex operator superalgebras (for various versions of the definition  see   \cite{Z}, \cite{Xu}, \cite{DZ}, \cite{DLM}, \cite{KS}, \cite{Ek} ).
For homogeneous $v \in V^{\bar \alpha}$ we set $\delta_{\bar \alpha} = 1$ if $ \bar{\alpha} = \bar{0} $ and $ \delta_{\bar \alpha} = 0$ otherwise.
We define two bilinear maps $*_g : V  \times V \rightarrow V$,
$\circ_g : V \times V \rightarrow V$ as follows: for homogeneous $a,
b \in V$, let
\bea
a *_g b &=& \left\{\begin{array}{cc}
 \  \mbox{Res}_x Y(a,x) \frac{(1+x) ^{\wt (a)}}{x}b  & \mbox{if} \ a,b  \in V^{\bar{0}}  \\
  0 & \mbox{if} \ a \ \mbox{or} \ b  \in V^{\bar{\alpha}} ,  \bar{\alpha} \ne \bar{0} \
\end{array}
\right.  \\
a\circ_g  b  &=&
   \mbox{Res}_x Y(a,x) \frac{(1+x) ^{\wt  (a) -1 + \delta_{\bar \alpha} + \alpha } }{x^{1+ \delta_{\bar \alpha} } }b  \quad  \mbox{if} \ a  \in V^{{\bar \alpha }}
 \eea

Next, we extend $*_g$ and $\circ_g$ to $V \otimes V$ linearly, and
denote by $O_g(V)\subset V$ the linear span of elements of the form
$a \circ_g b$, and by $A_g(V)$ the quotient space $V / O_g(V)$. The
space $A_g(V)$ has a unitary associative algebra structure, with the multiplication induced by $*_g$. Algebra $A_g(V)$ is called  the $g$--twisted
Zhu algebra of $V$.    The image of $v \in V$, under the natural
map $V \mapsto A_g(V)$ will be denoted by $[v]$.

If $g=Id$ we shall denote $A_g(V)$ by $A(V)$.

The following theorem was proved in \cite{DZ} (see also \cite{Z}, \cite{DLM}, \cite{Ek},  \cite{KS}).

\begin{theorem} \label{zhu-corr-mod} There is a one-to-one correspondence between admissible irreducible $g$--twisted  $V$-modules and
irreducible $A_g(V)$--modules.
\end{theorem}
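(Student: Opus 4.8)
The statement is the twisted Zhu correspondence, and I would follow the now-standard two-sided construction, adapting the arguments of \cite{Z}, \cite{DLM}, \cite{DZ} to the $\Gamma/\Z$--graded vertex operator superalgebra setting described above.

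\emph{From twisted modules to $A_g(V)$--modules.} Given an admissible $g$--twisted $V$--module $M=M(0)\oplus\bigoplus_{\mu> 0}M(\mu)$ with $M(0)\ne 0$, I would define for homogeneous $a\in V$ the component--preserving zero mode $o(a)$, namely $a_{\wt(a)-1}$ when $a\in V^{\bar 0}$ and $0$ when $a\in V^{\bar\a}$ with $\bar\a\ne\bar 0$, extended linearly; by the admissibility grading $o(a)$ preserves $M(0)$, and the $0$ on the non--integral part matches $a*_g b=0$ there. The first task is to show that $o(a\circ_g b)$ annihilates $M(0)$ for all $a,b$: expanding $\mathrm{Res}_x Y_M(a,x)(1+x)^{\wt(a)-1+\delta_{\bar\a}+\a}x^{-1-\delta_{\bar\a}}b$ and retaining only the $M(0)\to M(0)$ part by degree matching, the twisted Jacobi identity (M6) forces the surviving coefficient to vanish because it is expressed through modes $a_n$ with $n$ large enough to kill the top level. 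Hence $o$ descends to $A_g(V)$, and a parallel computation with the iterate $Y(Y(a,x_0)b,x_2)$ gives $o(a*_g b)=o(a)o(b)$ on $M(0)$, so $M(0)$ is an $A_g(V)$--module. If $0\ne N\subseteq M(0)$ is an $A_g(V)$--submodule, the $V$--submodule of $M$ generated by $N$ is graded, has degree--zero component exactly $N$, and by irreducibility of $M$ must equal $M$; thus $N=M(0)$ and $M(0)$ is irreducible.

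\emph{From $A_g(V)$--modules to twisted modules.} Conversely, given an irreducible $A_g(V)$--module $U$, I would form a generalized Verma-type $g$--twisted module $\bar M(U)=\bigoplus_{\mu\ge 0}\bar M(U)(\mu)$ with $\bar M(U)(0)=U$, built on the span of the twisted modes $a_n$ ($a\in V^{\bar\a}$, $n\in\bar\a$) with the action of $o(a)$ on $U$ prescribed by the $A_g(V)$--structure, as in \cite{DLM}, \cite{DZ}. One checks that $\bar M(U)$ is a weak $g$--twisted $V$--module (verifying (M1)--(M6), the twisted Jacobi identity being the main point), that it has a unique maximal graded $V$--submodule $J$ with $J\cap\bar M(U)(0)=0$, and that $L(U):=\bar M(U)/J$ is an admissible irreducible $g$--twisted $V$--module. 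The decisive point, and the main obstacle, is to prove that $U$ does not collapse, i.e.\ that the natural map $U\to L(U)(0)$ is injective: this is the twisted analogue of Zhu's nonvanishing lemma, established by producing for each $0\ne u\in U$ enough matrix coefficients $\langle u^*, Y(a_1,x_1)\cdots Y(a_k,x_k)u\rangle$ whose degree--zero parts recover the $A_g(V)$--action, so no graded submodule avoiding the top can contain $u$. Then $L(U)(0)\cong U$ as $A_g(V)$--modules.

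\emph{Mutual inverseness and the super/infinite-order bookkeeping.} Finally I would verify the two constructions are inverse up to isomorphism: for an irreducible admissible $g$--twisted $M$, the universal property of $\bar M(M(0))$ gives a surjection onto $M$ which is trivial on the top of its kernel, whence $M\cong L(M(0))$; and by construction $L(U)(0)\cong U$. This yields the claimed bijection between isomorphism classes. The extra features relative to the classical untwisted, finite-order case — the grading group $\Gamma/\Z$ possibly infinite, the Koszul signs $(-1)^{|a||b|}$ in (M6), and the shift $(1+x)^{\wt(a)-1+\delta_{\bar\a}+\a}$ in $\circ_g$ — enter only in the degree-matching and sign computations and leave the structure of the argument unchanged; compare \cite{KS}, \cite{Ek} for the superalgebra formalism and \cite{KR} for the infinite-order setting.
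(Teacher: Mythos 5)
Your proposal is correct and follows exactly the approach the paper relies on: the paper does not prove this theorem itself but quotes it from \cite{DZ} (see also \cite{Z}, \cite{DLM}, \cite{Ek}, \cite{KS}), and your outline --- the top-level functor $M\mapsto M(0)$ via degree-preserving zero modes, the generalized Verma induction $U\mapsto L(U)$ with the nonvanishing lemma, and the check that the two constructions are mutually inverse --- is precisely the standard two-sided argument of those references, adapted with the correct bookkeeping for the super grading and the possibly infinite-order automorphism.
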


\section{ Vertex operator algebra $L (k\Lambda_0)$ }

In this section we recall some basic facts about vertex operator
algebras associated to affine Lie algebras (cf. \cite{FZ},
\cite{Li-local}, \cite{K}).

Let ${\g}$ be a finite-dimensional simple Lie algebra over ${\Bbb
C}$ and let $(\cdot,\cdot)$ be a nondegenerate symmetric bilinear
form on ${\g}$. Let  ${\g} = {\n}_- + {\h} + {\n}_+$    be a
triangular decomposition for ${\g}$.
 The affine Lie algebra ${\hg}$ associated
with ${\g}$ is defined as $ {\g} \otimes {\C}[t,t^{-1}] \oplus
{\C}c \oplus {\C}d, $ where $c$ is the canonical central element
\cite{K}
 and  the Lie algebra structure
is given by $$ [ x \otimes t^n, y \otimes t^m] = [x,y] \otimes t
^{n+m} + n (x,y) \delta_{n+m,0} c,$$ $$[d, x \otimes t^n] = n x
\otimes t^n $$ for $x,y \in {\g}$.  We will write $x(n)$ for $x
\otimes t^{n}$.

The Cartan subalgebra ${\hh}$ and  subalgebras ${\hg}_+$,
${\hg}_-$ of ${\hg}$ are defined by $${\hh} = {\h} \oplus {\C}c
\oplus {\C}d, \quad
 {\hg}_{\pm} =  {\g}\otimes t^{\pm1} {\C}[t^{\pm1}].$$

Let  $P = {\g}\otimes {\C}[t] \oplus {\C}c \oplus {\C}d$ be upper
parabolic subalgebra.   For every $k \in {\C}$,  let ${\C} v_k$ be
$1$--dimensional $P$--module  such that the subalgebra ${\g}
\otimes  {\C}[t] + \C d$ acts trivially,
 and  the central element
$c$ acts as multiplication with $k \in {\C}$. Define the
generalized Verma module $N_{\g} ( k \Lambda_0)$ as
$$N_{\g} ({k}\Lambda_0) = U(\hg) \otimes _{ U(P) } {\C} v_k .$$
Then $ N_{\g} (k \Lambda_0)$ has a natural structure of a vertex
algebra generated by fields
$$ x (z) = Y( x(-1) {\bf 1}, z) = \sum_{n \in {\Z} } x(n) z ^{-n-1} \quad (x \in {\g} ),$$
where  ${\1} = 1 \otimes v_k$ is the vacuum vector.

Let $N^{1} _{\g} (k \Lambda_0)$ be the maximal ideal in the vertex
operator algebra $N_{\g} (k \Lambda_0)$. Then $L_{\g} (k \Lambda_0) =
\frac{N_{\g}(k \Lambda_0)}{N^{1}_{\g} ( k \Lambda_0)}$ is a simple vertex
operator algebra.

If $\g$ is a simple Lie algebra of type $A_n$, we shall denote the above vertex operator algebras  by $N_{A_n} (k \Lambda_0)$ and $L_{A_n} (k \Lambda_0)$.

Let $M_{\h} (k)$ denotes the vertex subalgebra of $L_{\g} (k \Lambda_0)$ generated by fields $h(z)$, $h \in {\h}$. Recall that if $k \ne 0$ then $M_{\h}(k) \cong M_{\h} (1)$ (cf. \cite{LL}).  We have the following coset vertex algebra
$$ K(\g, k):=\mbox{Com} (M_{\h} (k), L(k \Lambda_0) ) = \{ v \in L(k \Lambda_0) \ \vert \ h(n) v = 0, \ h \in {\h}, n \ge 0 \}, $$
which is called the parafermion vertex algebra.

The structure and representation theory   of the  parafermion vertex algebras  in the case when $k \in {\Zp}$ have been developed in series of papers \cite{ALY}, \cite{DLWY},  \cite{DW}. We shall see that for admissible affine vertex algebras the parafermion vertex  algebras are related with vertex algebras appearing in logarithmic conformal field theory.

\section{Lattice construction of $A_1^{(1)}$--modules of level
$-\frac{3}{2}$ and screening operators}
\label{real-sl2}

In this section we shall recall the definition of the Wakimoto modules for affine Lie algebra $A_1^{(1)}$ in the case of level $k = - 3/2$. For this level we will have  nice realization of modules and screening operators using lattice vertex superalgebras and their twisted modules.
Details about Wakimoto modules can be found in \cite{efren}, \cite{FB} and \cite{W-mod}.

Let $ p\in {\N}$, $p \ge 2$.

Let $L^{(p)} $ be the following lattice
$$L^{(p)}= {\Z} \a + {\Z}\b + {\Z} \delta $$
with the ${\Q}$--valued bilinear form $\la \cdot, \cdot \ra$ such
that
$$ \la \a , \a \ra = 1,  \quad \la \b , \b
\ra =-1, \quad \la \delta, \delta \ra = \frac{2}{p} $$ and other products of basis vectors are zero.

Let $V_{ L ^{(p)} } $ be the associated  generalized vertex algebra (cf. \cite{DL}). If $p=2$  then $V_{ L ^{(2)} } $ is a vertex superalgebra.

Assume now that $k + 2 = \frac{1}{p}$.

Define the
following three vectors
\bea
&& e = e^{\a + \b}, \label{def-e}  \\
&& h = -2 \b (-1) + \delta(-1), \label{def-h} \\
&& f = ( (k+1)  ( \a (-1) ^{ 2} - \a(-2) ) - \a(-1)
\delta(-1) +  \nonumber \\ && \ \ \  (k+2)  \a(-1) \b (-1)  )   e^{-\a - \b}.
\label{def-f} \eea
Then the components of the fields $Y(x,z) = \sum_{n \in {\Z}} x(n)
z ^{-n-1}$, $x \in \{e,f,h\}$ satisfy the commutations relations
for the affine Lie algebra $\hat{sl_2}$ of level $k$. Moreover, the subalgebra of $V_{ L ^{(p)} }$ generated by the
set $\{e,f,h\}$ is isomorphic to the simple vertex operator
algebra $L_{A_1} ( k \Lambda_0)$.

Screening operators are
\bea
\label{scr-gen}   Q = \mbox{Res}_z Y( e^{\alpha + \beta - p  \delta}, z) , \quad  \widetilde{Q} = \mbox{Res}_z Y( e^{ - \tfrac{1}{p} (\alpha + \beta)  +\delta}, z).
\eea
They commute with the $\widehat{sl_2}$--action.

Let $M$ be a subalgebra of $V_{L^{(p)} }$ generated by
$$ a  = e ^{\alpha + \beta}, \ a ^{*} =-\alpha(-1) e^{-\alpha -
\beta}. $$ Then $M$ is isomorphic to the Weyl vertex algebra (cf.
\cite{FMS}, \cite{efren}, \cite{A-2001}).

Let $M_{\delta}(1)$ be the Heisenberg vertex algebra generated by the field $\delta (z) = \sum_{n \in {\Z}} \delta(n) z ^{-n-1} $. Then
$$ F_{p/2} = M_{\delta} (1) \otimes {\C}[ {\Z} \tfrac{p}{2} \delta]  \quad \mbox{and} \quad  M \otimes F_{p/2} $$ are subalgebras of $V_{L^{(p)} }$.

We have the following (generalized) vertex algebra

$$ {\mathcal V} ^{(p)} = \mbox{Ker}  _{ M \otimes F_{p/2} } \widetilde{Q}. $$

Moreover  $L(k \Lambda_0)$ can be realized as a subalgebra of the
 $M \otimes M_{\delta}(1) \subset  M \otimes F_{p/2} $ (cf. \cite{efren},
\cite{FB}, \cite{W-mod}):
\bea
&& e(z)  = a (z) , \label{def-e-cw} \\
&& h(z) = -2 : a ^{*} (z) a (z) : +  \delta(z)  , \label{def-h-cw} \\
&& f(z)  = - : a^{*} (z) ^{2} a (z) : +  k \partial_{z}
a^{*} (z) +    a^{*} (z) \delta(z) . \label{def-f-cw} \eea
Since $ \widetilde{Q}$ commutes with the action of  $\widehat{sl_2}$ we have that  $$L_{A_1} (k \Lambda_0)  \subset {\mathcal V} ^{(p)}. $$ Moreover, one can show that $Q$ acts as a derivation on ${\mathcal V} ^{(p)}$.
If $p$ is even, then ${\mathcal V} ^{(p)}$ is a vertex superalgebra.

Let now $p = 2$ and $k =-3/2$. Let $L= L^{(2)}$. Then $V_L$ is a vertex operator superalgebra and the above formula give a realization of the simple vertex operator algebra inside of $V_L$.

Under this realization, the Sugawara vector is mapped to
\bea && \omega = \frac{1}{2} ( \a(-1) ^{2} -\a(-2) -\b(-1) ^{2} +
\b (-2) + \delta(-1) ^{2} - 2 \delta(-2) ). \label{def-virasoro}
\eea
The components of the field $Y(\omega,z) = \sum_{n \in {\Z}} L(n)
z^{-n-2}$ satisfy the commutation relations for the Virasoro
algebra with central charge $c=-9$.

Screening operators are  $$ Q= e^{\a + \b - 2 \delta}_0 = \mbox{Res}_z
Y(e^{\a+ \b -2\delta},z), \quad \widetilde{Q} = e^{-\frac{\a + \b
- 2 \delta} {2} }_0 = \mbox{Res}_z Y(e^{-\frac{\a + \b - 2 \delta}
{2} },z).  $$

 We also note the following important relation:

\bea
&& e(-1) \omega = e^{\a + \b }_{-1} \omega = \nonumber \\
 &&\left( - (\a(-1) + \b(-1) ) \b(-1) + \b (-2) +
\tfrac{1}{2}\delta(-1) ^{2} - \delta(-2) \right) e^{\a + \b}.
\label{vazna-rel1}
\eea

Moreover,  $F= F_1$ is a Clifford (fermionic) vertex superalgebra and it is
generated by $ \Psi = e ^{\delta}, \ \Psi ^{*} = e ^{-\delta}$
(cf. \cite{K}).

 Then  formulas (\ref{def-e-cw})-(\ref{def-f-cw}) give a realization of   $L(-\frac{3}{2} \Lambda_0)$  as a vertex subalgebra of the
Clifford-Weyl vertex superalgebra $M \otimes F$.
%
%
%
%

Since $e ^{\alpha + \beta - 2 \delta } =a _ {-1} \Psi^{*}_{-2}
{\Psi} ^{*} \in M \otimes F$ we have that
$$ Q = \mbox{Res}_{z} : a(z) \ \partial_z\Psi^{*} (z) \ \Psi^{*} (z):
. $$

Since $sl_2$ acts on $M \otimes F$ by derivations, we have the
subalgebra $(M \otimes F  )^{int}$ which is the maximal $sl_2$--integrable
submodule of $M \otimes F$.

Clearly, $L_{A_1} (-\frac{3}{2} \Lambda_0) \subset (M \otimes F  )^{int}$. Since
$e^{\delta}$ is $\widehat{sl_2}$--singular vector and since
\bea \label{integ}  f(0) ^2 e^{\delta}  =0  \eea
we have that $\Psi = e ^{\delta} \in   (M \otimes F  )^{int}$.

\begin{remark}
In the physics literature  the tensor product vertex algebra $M \otimes F$ is called the $\beta \gamma b c$  system. So we have realized an  extension of $L_{A_1} (-\tfrac{3}{2} \Lambda_0) $ inside of $\beta \gamma b c $ system.
 A different kind of realization of  $L_{A_1} (-\tfrac{3}{2} \Lambda_0) $ also appeared in \cite{CL}. The authors realized  $L_{A_1} (-\tfrac{3}{2} \Lambda_0) $ inside of the tensor product of three copies of the   $\beta \gamma b c$  system.
It was shown in \cite{CL} that the simple affine vertex algebra $L_{A_1} (-\tfrac{3}{2} \Lambda_0) $ (denoted there by $V_{-3/2} (sl_2)$) is connected with Odake's algebra with central charge $c=9$. In fact Odake's algebra and $V_{-3/2} (sl_2)$ were used to describe certain coset vertex subalgebras of  $(M \otimes F)^{\otimes 3}$. Recall that   Odake's vertex algebra is an extension of $N=2$ superconformal vertex algebra with central charge $c=9$. In the present  paper we shall go in a different direction. We shall study $(M \otimes F  )^{int}$ and prove that it is isomorphic to the simple $N=4$ superconformal vertex algebra with central charge $c=-9$, which is a different extension  of the $N=2$ superconformal vertex algebra.
\end{remark}

Let now $\mu \in {\C}$ and let $g_\mu:= e^{ 2 \pi i \mu \delta (0) }$. Then $g_{\mu}$--is an automorphism of $F$. Let $F ^{\mu}$ be a $g_{\mu}$--twisted $F$--module:
$$ (F^{\mu}, \widetilde{Y}_{F ^{\mu} } (\cdot, z) ):= (F, Y_{F} (\Delta (\mu \delta (-1) {\bf 1}) \cdot, z) ). $$

\begin{remark}
Recall that the twisted module structure on $F^{\mu}$  is generated by the twisted fields:
$ z ^{\mu} \Psi(z), z ^{-\mu} \Psi^* (z)$
(see \cite{KR}).
\end{remark}

$F^{\mu} $ is untwisted  $M_{\delta} (1)$--module and we have the following decomposition
$$ F ^{\mu} \cong V_{\Z \delta} . e ^{\mu \delta} = \bigoplus_{ j \in {\Z} } M_{\delta} (1). e ^{(j+ \mu) \delta}. $$

Identifying  $g_{\mu} =  Id \otimes  g_{\mu} $, we can consider $g_{\mu}$ as an automorphism of the vertex operator superalgebra $M \otimes F$. Moreover, if $\mathcal{M}$ is any module for the vertex algebra $M$, then $\mathcal{M} \otimes F ^{\mu}$ is a $g_{\mu}$--twisted $M \otimes F$--module.  Since $L_{A_1} (-\tfrac{3}{2} \Lambda_0) \subset M \otimes M_{\delta} (1)$ we have that  $\mathcal{M} \otimes F ^{\mu}$  is an untwisted $L(-\tfrac{3}{2} \Lambda_0)$--module.
 In particular,
 $$ M \otimes F ^{\mu} = \bigoplus_{j \in {\Z} } M \otimes M_{\delta}(1). e ^{ (\mu + j) \delta }, $$
 so $M \otimes F ^{\mu}$ is a direct sum of Wakimoto modules for $\widehat{sl_2}$ of the form $M \otimes M_{\delta} (1). e ^{ (\mu + j) \delta }$.

\section{ A realization of the simple $N=4$ superconformal vertex algebra with central
charge $c=-9$}

In this section we shall present an explicit realization of the simple $N=4$ superconformal vertex algebra $L^{N=4}_c$  with central charge $c=-9$.

We shall first recall the definition of the universal $N=4$ superconformal vertex algebra $V^{N=4}_c$ of central charge $c$ associated to the $N=4$ superconformal algebra.
Given a vertex superalgebra $V$ and a vector $a \in V$, we expand
the field $Y(a,z) = \sum_{n \in {\Z}} a_n z^{-n-1}$.  Define the
$\lambda$-bracket
$$[a_{\lambda} b] = \sum_{j \ge 0} \frac{\lambda ^{j}}{j!} a_j b.
$$
We shall now define the  "small" $N=4$ superconformal vertex
algebra $V_c ^{N=4}$ following \cite{K}. The even part of this
vertex algebra is generated by the Virasoro field $L$ and three
primary fields of conformal weight $1$, $J^{0}$, $J^{+}$ and
$J^{-}$. The odd part is generated by four primary fields of
conformal weight $\tfrac{3}{2}$, $G^{\pm}$ and
$\overline{G}^{\pm}$.

The remaining (non-vanishing) $\lambda$--brackets are
\bea
 &[J^{0}_{\l}, J^{\pm}] = \pm 2 J^{\pm} & [J^{0}_{\l} J^{0}] =
 \frac{c}{3} \l \nonumber \\
 &[J^{+}_{\l} J^{-}] = J^{0} + \frac{c}{6} \l  & [J^{0} _{\l}
 G^{\pm}] = \pm G^{\pm} \nonumber \\
 &   [J^{0}_{\l} \overline{G} ^{\pm} ] = \pm \overline{G} ^{\pm} &
 [J ^{+}_{\l} G^{-} ] = G^{+} \nonumber \\
 &   [J^{-}_{\l} G ^{+}] = G ^{-}  & [J^{+}_{\l}
 \overline{G}^{ -}] = - \overline{G} ^{+} \nonumber \\
 & [ J^{-}_{\l} \overline{G} ^{+}] = - \overline{G} ^{ -} &
 [G^{\pm}_{\l} \overline{G} ^{\pm} ] = (T + 2 \l) J^{\pm}
 \nonumber \\
 & [G^{\pm}_{\l}  \overline{G} ^{\mp} ] =& L \pm \frac{1}{2} T J^{0} \pm \l J^{0} + \frac{c}{6} \l ^{2}
 \nonumber
\eea

Let $V^{N=4}_c$ be the vertex superalgebra freely generated by the
fields $\overline{G} ^{\pm}$, $G^{\pm}$, $J^{+}$, $J^{0}$, $J^{-}$
and $L$. Let $L^{N=4}_c$ be its simple quotient.

Irreducible highest weight $V^{N=4}_c$--modules are defined as usual (cf. \cite{KW2} and \cite{Ar1}).

We shall now show that the simple vertex superalgebra $L^{N=4}_c$
with $c=-9$ can be realized as a subalgebra of the lattice vertex
superalgebra $V_L$ from  Section \ref{real-sl2}.

Define

\bea
&& j^{+} =e,  \quad j^{0}=h, \quad j^{-}= f, \label{def-j} \\
&& \tau^{+} =   e^{\delta} , \label{def-t+} \\
&& \overline{\tau} ^{+} = Q e^{\delta} = (\a(-1) + \b(-1) -2
\delta(-1)) e^{\a+\b -\delta}, \label{def-ot+} \\
&& \tau^{-}= f(0) e^{\delta} =-\alpha (-1) e ^{- \alpha - \beta + \delta}, \label{def-t-} \\
&& \overline{\tau} ^{ -}= - f(0) Q e^{\delta}. \label{def-ot-} \\
&&  \omega = ( e(-1) f(-1) + f(-1) e(-1) + \frac{1}{2} h(-1) ^2 ) {\bf 1}  \nonumber \\ && = \quad \frac{1}{2} ( \a(-1) ^{2} -\a(-2) -\b(-1) ^{2} +
\b (-2) + \delta(-1) ^{2} - 2 \delta(-2) ). \label{def-virasoro-2}
\eea

(Note  that the Virasoro vector $\omega$ is the Sugawara Virasoro vector in $L_{A_1} (-\frac{3}{2} \Lambda_0)$).

We can evaluate  the above expressions in vertex superalgebra $M \otimes F$.

 $$  \tau ^{+} = \Psi, \quad \tau^{-} = a^{*} _{-1}  \Psi, $$
 $$ \overline{\tau} ^{+}  = 2 a_{-1}  D \Psi^{*} + a_{-2} \Psi^*, $$
 $$ \overline{\tau} \  ^{-}  = D^{2} \Psi^{*} + (2 a^{*}_{-1} a_{-1} + \delta (-1) ) D \Psi^{*} + a^{*}_{-1} a_{-2}
  \Psi^{*}. $$

Define also the following fields

 \bea
&& J^{+} (z) =e(z),  \quad J^{0} (z)=h(z) , \quad J^{-}= f(z), \label{def-J(z)} \\
&&G^{\pm} (z) = Y( \tau^{\pm},z) = \sum_{n \in {\Z}} G^{\pm}(n+\tfrac{1}{2}) z^{-n-2}  , \label{def-G(z)} \\
&& \overline{G} ^{\pm} (z) = Y(\overline{\tau}^{\pm},z)  =
\sum_{n \in {\Z}} \overline{G}^{\pm}(n+\tfrac{1}{2}) z^{-n-2}, \label{def-oG(z)} \\
&& L(z) = Y(\omega,z) = \sum_{\n \in {\Z}} L(n) z^{-n-2}.
\label{def-L(z)}
\eea

\begin{lemma}
The components of the fields (\ref{def-J(z)})-
(\ref{def-L(z)}) satisfy the commutation relations  of the $N=4$
superconformal algebra with central charge $c=-9$.
\end{lemma}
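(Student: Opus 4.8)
The plan is to verify the $N=4$ relations directly, using the lattice-vertex-superalgebra realization and exploiting as much structure as possible so that only a few brackets need genuine computation. First I would record the obvious part: by the discussion in Section \ref{real-sl2}, the fields $J^{+}=e$, $J^{0}=h$, $J^{-}=f$ generate a copy of $L_{A_1}(-\tfrac{3}{2}\Lambda_0)$ inside $V_L$, so the $\lambda$-brackets among $J^{0},J^{\pm}$ are automatically those of $\widehat{sl_2}$ at level $k=-3/2$, matching the $N=4$ relations with $c=-9$ (note $c/3=-3=2k$, $c/6=-3/2=k$). Likewise $\omega$ is the Sugawara vector, so $L$ is Virasoro of central charge $c=-9$ and the $J$'s, being $L(-1)$-derivatives aside, are primary of weight $1$; and since $\tau^{\pm},\overline{\tau}^{\pm}$ are built from $e^{\delta}$ (weight $\tfrac12$ for $M_\delta(1)$) together with the weight-$1$ generators of $L_{A_1}$, one checks from the explicit $M\otimes F$ formulas that $G^{\pm},\overline{G}^{\pm}$ all have conformal weight $\tfrac32$, i.e. $[L_\lambda G^{\pm}]=(T+\tfrac32\lambda)G^{\pm}$ and similarly for $\overline{G}^{\pm}$.

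Next I would compute the action of $J^{0}(0)=h(0)$ on the four odd fields. Since $h=-2\beta(-1)+\delta(-1)$ and the $\delta$-charge of $e^{\gamma}$ is $\langle\delta,\gamma\rangle\cdot\tfrac{p}{2}$-normalized appropriately, one gets $h(0)\tau^{+}=\tau^{+}$, $h(0)\tau^{-}=-\tau^{-}$, $h(0)\overline{\tau}^{+}=\overline{\tau}^{+}$, $h(0)\overline{\tau}^{-}=-\overline{\tau}^{-}$, giving $[J^{0}_{\lambda}G^{\pm}]=\pm G^{\pm}$ and $[J^{0}_{\lambda}\overline{G}^{\pm}]=\pm\overline{G}^{\pm}$. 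The relations involving $J^{\pm}$ acting on the $G$'s are then mostly forced: $\tau^{-}=f(0)\tau^{+}$ and $\overline{\tau}^{-}=-f(0)\overline{\tau}^{+}$ by definition, and $\overline{\tau}^{+}=Q\tau^{+}$ with $Q$ commuting with $\widehat{sl_2}$; combining these with $e(0)$ acting on $\tau^{\pm}$ and the $\widehat{sl_2}$-module structure yields $[J^{+}_{\lambda}G^{-}]=G^{+}$, $[J^{-}_{\lambda}G^{+}]=G^{-}$, $[J^{+}_{\lambda}\overline{G}^{-}]=-\overline{G}^{+}$, $[J^{-}_{\lambda}\overline{G}^{+}]=-\overline{G}^{-}$, and the vanishing brackets $[J^{\pm}_{\lambda}G^{\pm}]=[J^{\pm}_{\lambda}\overline{G}^{\pm}]=0$ (e.g.\ $e(0)\tau^{+}=e_0 e^{\delta}=0$ since $\langle\alpha+\beta,\delta\rangle=0$ and regularity of the OPE).

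The real work is the $G$–$\overline{G}$ brackets:
\begin{eqnarray}
&&[G^{\pm}_{\lambda}\overline{G}^{\pm}]=(T+2\lambda)J^{\pm},\nonumber\\
&&[G^{\pm}_{\lambda}\overline{G}^{\mp}]=L\pm\tfrac12 TJ^{0}\pm\lambda J^{0}+\tfrac{c}{6}\lambda^{2},\nonumber
\end{eqnarray}
together with $[G^{\pm}_{\lambda}G^{\mp}]=0$, $[\overline{G}^{\pm}_{\lambda}\overline{G}^{\mp}]=0$, and all "equal-superscript'' fermionic brackets among $G$'s (resp.\ $\overline G$'s) vanishing. For these I would use the explicit expressions in $M\otimes F$: $\tau^{+}=\Psi$, $\tau^{-}=a^{*}_{-1}\Psi$, $\overline{\tau}^{+}=2a_{-1}D\Psi^{*}+a_{-2}\Psi^{*}$, and the displayed formula for $\overline{\tau}^{-}$, and compute normally-ordered OPEs using the Weyl algebra contractions $a(z)a^{*}(w)\sim (z-w)^{-1}$ and the Clifford contractions $\Psi(z)\Psi^{*}(w)\sim (z-w)^{-1}$ (here $F=F_1$). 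The key computation $[G^{+}_{\lambda}\overline{G}^{+}]=(T+2\lambda)J^{+}$ is essentially relation (\ref{vazna-rel1}) rewritten: since $\overline{\tau}^{+}=Qe^{\delta}$ and $Q=\mathrm{Res}_z\,{:}a(z)\partial_z\Psi^{*}(z)\Psi^{*}(z){:}$, one reduces $(\tau^{+})_{n}\overline{\tau}^{+}$ to the computation of $e^{\a+\b}_{n}\omega$ type expressions already available. The $G^{\pm}$–$\overline{G}^{\mp}$ bracket, which must reproduce $\omega$ plus $J^{0}$-terms with the correct coefficient $\tfrac{c}{6}=-\tfrac32$ in front of $\lambda^{2}$, is the most delicate; I expect matching this central term and the $TJ^{0}$ coefficient to be the main obstacle, and I would pin it down by a careful bookkeeping of the double contractions (one from the $\beta\gamma$ pair, one from the $bc$ pair) against the explicit form of $\omega$ in (\ref{def-virasoro-2}). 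All remaining brackets vanish for degree/charge reasons or because the relevant OPE is regular (e.g.\ $\Psi(z)\Psi(w)\sim 0$ forces $[G^{+}_{\lambda}G^{+}]=0$ since both involve $\Psi$ with no $\Psi^{*}$). Once all brackets are checked against the table in Section \ref{real-sl2}, the lemma follows.
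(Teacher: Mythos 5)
Your overall plan is correct and would establish the lemma, but it diverges from the paper's proof precisely at the point you yourself flag as the hardest: the brackets $[G^{\pm}{}_{\lambda}\overline{G}^{\mp}]$, which must produce $L\pm\tfrac12 TJ^{0}\pm\lambda J^{0}+\tfrac{c}{6}\lambda^{2}$ with the correct central term. You propose to get these by direct bookkeeping of double contractions in the $\beta\gamma bc$ system, and you leave that computation as a promissory note. The paper avoids it entirely by observing that the quadruples $(\tau^{+},\overline{\tau}^{-},j^{0},\omega)$ and $(\tau^{-},\overline{\tau}^{+},-j^{0},\omega)$ are $N=2$ superconformal vectors, i.e.\ they reproduce the standard free-field realization of the $N=2$ algebra with $c=-9$ inside the $\beta\gamma bc$ system; the mixed $G$--$\overline{G}$ brackets with the central term are then exactly the known $N=2$ relations and need not be recomputed. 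For the remaining brackets the two arguments essentially coincide: you both use that $\tau^{\pm}$ and $\overline{\tau}^{\pm}$ span two-dimensional $sl_2$-modules (the paper phrases this via $\tau^{+},\overline{\tau}^{+}$ being $\widehat{sl_2}$-highest weight vectors generating $L_{A_1}(-\tfrac52\Lambda_0+\Lambda_1)$, using (\ref{integ}) and the fact that $Q$ commutes with $\widehat{sl_2}$), and the equal-charge bracket $[G^{+}{}_{\lambda}\overline{G}^{+}]=(T+2\lambda)J^{+}$ is in both cases a short lattice computation tied to (\ref{vazna-rel1}). So: your route is more elementary but shifts the entire burden onto one OPE computation that you have not actually carried out; the paper's route buys that computation for free from the known $N=2$ structure. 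To make your write-up complete you would either have to perform the double-contraction calculation explicitly or import the $N=2$ identification as the paper does.
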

\begin{proof}
Standard calculation in lattice vertex algebras implies
$$ \tau^{+}_0  \overline{\tau} ^{\pm}  =J^{\pm}(-2){\bf 1}, \quad  \tau^{+}_1  \overline{\tau} ^{\pm}= 2 J^{\pm}(-1){\bf 1}, \quad \tau^{\pm}_n  \overline{\tau} ^{\pm}  = 0
\quad (n \ge 2).$$
 Moreover  \bea && \tau^{+}, \overline{\tau}^{-}, j^{0}, \omega \label{n=2-1} \\  &&\tau^{-}, \overline{\tau}^{+}, -j^{0}, \omega \label{n=2-2} \eea
  are $N=2$ superconformal vectors, i.e.,   they generate  free-field realizations of $N=2$ superconformal algebra with $c=-9$ inside of $\beta \gamma b c $ system.
By using relation (\ref{integ}) and  the fact that $Q$ is a screening operator we  have that $\tau^{+}, \overline{\tau} ^{+}$ are highest weight vector for the affine Lie algebra $\widehat{sl_2}$ generated by $J^{\pm}(z)$ and $J^0(z)$ and
$$ U(\widehat{sl_2}). \tau^{+} \cong U(\widehat{sl_2}). \tau^{+} \cong L_{A_1} (-\frac{5}{2} \Lambda_0 + \Lambda_1) $$
In particular $U(sl_2). {\tau}^{+}$  (resp. $U(sl_2). \overline{\tau}^{+}$ ) is two-dimensional  spanned by $\tau^{\pm}$ (resp. $\overline{\tau}^{\pm}$).
From   the arguments described above and using commutator formula, we get the assertion.
\end{proof}
 Let us denote the $N=4$ superconformal algebra with ${\mathcal
A}$.
Consider
$$ V= U({\mathcal A}).{\1}. $$
Then $V$ is a highest weight ${\mathcal A}$--module. Moreover, $V$
is a $\tfrac{1}{2} {\Zp}$--graded vertex operator superalgebra
with central charge $c=-9$.

So we have:

\begin{proposition}
$V$ is a subalgebra of the Clifford-Weyl vertex superalgebra $M
\otimes F$.
\end{proposition}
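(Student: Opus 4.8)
The plan is to observe that the entire $N=4$ structure is built from finitely many generating vectors that visibly lie in $M\otimes F$, and then to identify $V$ with the vertex subalgebra they generate inside $M\otimes F$.

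First I would locate the generators inside $M\otimes F$. The even generators $j^{+}=e$, $j^{0}=h$, $j^{-}=f$ lie in $L_{A_1}(-\tfrac{3}{2}\Lambda_0)$, which by the realization (\ref{def-e-cw})--(\ref{def-f-cw}) is a vertex subalgebra of $M\otimes M_{\delta}(1)\subset M\otimes F$; hence so is their Sugawara vector $\omega$. For the odd generators I would use the explicit expressions already computed above, namely $\tau^{+}=\Psi$, $\tau^{-}=a^{*}_{-1}\Psi$, $\overline{\tau}^{+}=2a_{-1}D\Psi^{*}+a_{-2}\Psi^{*}$ and $\overline{\tau}^{-}=D^{2}\Psi^{*}+(2a^{*}_{-1}a_{-1}+\delta(-1))D\Psi^{*}+a^{*}_{-1}a_{-2}\Psi^{*}$, each of which is manifestly an element of $M\otimes F$; equivalently, $\overline{\tau}^{+}=Q\tau^{+}$, $\tau^{-}=f(0)\tau^{+}$, $\overline{\tau}^{-}=-f(0)Q\tau^{+}$, and both $f(0)$ and $Q=\mathrm{Res}_{z}:a(z)\,\partial_{z}\Psi^{*}(z)\,\Psi^{*}(z):$ are operators on $M\otimes F$, so they carry $\tau^{+}=\Psi\in M\otimes F$ back into $M\otimes F$.

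Next I would invoke the preceding Lemma: the modes of the fields (\ref{def-J(z)})--(\ref{def-L(z)}) span a representation of the $N=4$ superconformal Lie superalgebra $\mathcal A$, which is generated as a Lie superalgebra by the modes of $L,J^{0},J^{\pm},G^{\pm},\overline{G}^{\pm}$. Consequently $V=U(\mathcal A).{\bf 1}$ is exactly the linear span of all vectors obtained by applying iterated modes of the set $S=\{\omega,J^{0},J^{\pm},\tau^{\pm},\overline{\tau}^{\pm}\}$ to ${\bf 1}$. Since $M\otimes F$ is a vertex superalgebra it is closed under every product $u_{n}w$ with $u,w\in M\otimes F$; as ${\bf 1}\in M\otimes F$ and $S\subset M\otimes F$, a short induction on the number of applied modes then yields $V\subset M\otimes F$ as a subspace. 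Finally, by the standard fact that the vertex subalgebra of a vertex superalgebra generated by a subset $S$ coincides with the span of $\{{\bf 1}\}$ together with all $u^{(1)}_{n_{1}}\cdots u^{(r)}_{n_{r}}{\bf 1}$ with $u^{(i)}\in S$ and $n_{i}\in\Z$, this span $V$ is in fact a vertex subalgebra of $M\otimes F$, not merely a subspace, which is the assertion.

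I do not expect a genuine obstacle here: the substantive work has already been discharged in the Lemma and in rewriting the lattice vectors $e^{\a+\b}$, $e^{\pm\delta}$, $e^{\a+\b-\delta}$ in terms of the $\beta\gamma bc$ generators $a,a^{*},\Psi,\Psi^{*},\delta$. The only point calling for a word of care is checking that $\omega$ and the $\overline{\tau}^{\pm}$ land in $M\otimes F$ rather than merely in the ambient lattice superalgebra $V_{L}$; this is taken care of by the inclusion $L_{A_1}(-\tfrac{3}{2}\Lambda_0)\subset M\otimes F$ for $\omega$, and by the fact that the screening operator $Q$ restricts to an operator on $M\otimes F$ for the $\overline{\tau}^{\pm}$.
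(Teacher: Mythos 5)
Your proof is correct and follows essentially the same route as the paper: the paper simply records the proposition as an immediate consequence of having already expressed $\tau^{\pm},\overline{\tau}^{\pm}$ (and $e,h,f,\omega$) explicitly in terms of $a,a^{*},\Psi,\Psi^{*},\delta$ inside $M\otimes F$, so that $V=U(\mathcal A).{\bf 1}$ is generated by modes of elements of $M\otimes F$. Your write-up just makes explicit the closure argument that the paper leaves implicit.
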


\begin{lemma} \label{lemma-c}We have:
$$ G^{+}(-\tfrac{3}{2}) \overline{G}^{+} (-\tfrac{3}{2}) {\1} = -2
e(-1) \omega + h(-1) e(-2) - h(-2) e(-1). $$
\end{lemma}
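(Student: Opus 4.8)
The plan is to compute the product $G^{+}(-\tfrac{3}{2})\overline{G}^{+}(-\tfrac{3}{2}){\bf 1}$ directly in the lattice vertex superalgebra $V_L$ using the explicit expressions for $\tau^{+}=e^{\delta}$ and $\overline{\tau}^{+}=(\a(-1)+\b(-1)-2\delta(-1))e^{\a+\b-\delta}$ obtained in Section~\ref{real-sl2}. By the mode expansions in (\ref{def-G(z)})--(\ref{def-oG(z)}), $G^{+}(-\tfrac{3}{2})=\tau^{+}_0$ and $\overline{G}^{+}(-\tfrac{3}{2})=\overline{\tau}^{+}_0$, so what we must evaluate is
$$ \tau^{+}_0\,\overline{\tau}^{+}_0\,{\bf 1}=\tau^{+}_0\,\overline{\tau}^{+}=e^{\delta}_0\bigl((\a(-1)+\b(-1)-2\delta(-1))e^{\a+\b-\delta}\bigr). $$
The first step is therefore a routine but careful application of the vertex operator for $e^{\delta}$ acting on a state in the Fock space attached to $e^{\a+\b-\delta}$: one uses $\la\delta,\a+\b-\delta\ra=-\tfrac{2}{p}=-1$ (here $p=2$), so the operator $Y(e^{\delta},z)$ applied to $e^{\a+\b-\delta}$ produces, up to the cocycle sign, $z^{-1}E^{-}(-\delta,z)e^{\delta}e^{\a+\b-\delta}\cdot(\cdots)$, and extracting the coefficient of $z^{-1}$ (the $0$-th mode) picks out the relevant finitely many terms.

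The second step is to simplify the resulting expression, which lies in the Fock space over $e^{\a+\b}$, and recognize it as the right-hand side. Here one uses that $e(-1)\omega=e^{\a+\b}_{-1}\omega$ was already computed in (\ref{vazna-rel1}) as
$$ e(-1)\omega=\bigl(-(\a(-1)+\b(-1))\b(-1)+\b(-2)+\tfrac{1}{2}\delta(-1)^2-\delta(-2)\bigr)e^{\a+\b}, $$
and that $e(-2){\bf 1}=e^{\a+\b}_{-2}{\bf 1}$, $h(-1)=-2\b(-1)+\delta(-1)$, $h(-2)=-2\b(-2)+\delta(-2)$, so that $h(-1)e(-2)-h(-2)e(-1)$ is again an explicit element of the Fock space over $e^{\a+\b}$. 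One then matches the two sides monomial by monomial in the Heisenberg generators $\a(-n),\b(-n),\delta(-n)$ acting on $e^{\a+\b}$. Alternatively, and perhaps more cleanly, one can argue structurally: both sides lie in the weight space of $L$-degree $2$ and $J^0$-charge $+2$ inside $V$, one checks that both are annihilated by $J^{+}(n)$, $n\ge 0$ (using that $\tau^{+},\overline\tau^{+}$ are $\widehat{sl_2}$-highest weight as shown in Lemma~4.1 and its proof, together with the $\a$-bracket commutator formula), hence both are of the form $(\text{scalar})\cdot e(-1)\omega+(\text{lower})$, reducing the identity to finitely many scalar comparisons.

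The main obstacle is purely bookkeeping: keeping track of the cocycle/sign factors $\varepsilon(\delta,\a+\b-\delta)$ coming from the generalized lattice vertex algebra structure (these are the signs that make $V_L$ a vertex \emph{super}algebra), and correctly collecting all contributions from the normally ordered exponential $\exp\bigl(\sum_{n\ge 1}\tfrac{\delta(-n)}{n}z^n\bigr)$ when it hits the degree-one state $(\a(-1)+\b(-1)-2\delta(-1))e^{\a+\b-\delta}$ — in particular the cross term where the $z^1$-piece of the exponential contracts against the $-2\delta(-1)$ and the $z^0$-piece against $\a(-1)+\b(-1)$. Once the signs are pinned down against the conventions already used to derive (\ref{vazna-rel1}) and the formulas for $\overline\tau^{\pm}$, the computation closes. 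I expect the verification to occupy only a few lines once organized this way.
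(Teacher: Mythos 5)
Your overall strategy is exactly the paper's: compute $G^{+}(-\tfrac32)\overline{G}^{+}(-\tfrac32)\1$ directly in $V_L$ as a mode of $Y(e^{\delta},z)$ applied to $\overline{\tau}^{+}=(\a(-1)+\b(-1)-2\delta(-1))e^{\a+\b-\delta}$, then compare with the right-hand side via the already-established identity (\ref{vazna-rel1}) for $e(-1)\omega$. The paper carries this out and obtains the intermediate expression $\bigl((\a(-1)+\b(-1))\delta(-1)-\delta(-1)^2+\delta(-2)\bigr)e^{\a+\b}$ before matching terms, so your plan is the right one.

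However, there is a concrete off-by-one error in your mode identification that would derail the computation as written. From (\ref{def-G(z)}), $Y(\tau^{+},z)=\sum_{n}G^{+}(n+\tfrac12)z^{-n-2}$, so $G^{+}(-\tfrac32)$ is the coefficient of $z^{0}$, i.e.\ $G^{+}(-\tfrac32)=\tau^{+}_{-1}$, not $\tau^{+}_{0}$; likewise $\overline{G}^{+}(-\tfrac32)=\overline{\tau}^{+}_{-1}$ (note also that $\overline{\tau}^{+}_{0}\1=0$, whereas $\overline{\tau}^{+}_{-1}\1=\overline{\tau}^{+}$). The quantity you propose to compute, $\tau^{+}_{0}\overline{\tau}^{+}$, equals $e(-2)\1$ (this is one of the relations recorded in the proof of the commutation-relations lemma) and has conformal weight $2$, whereas the right-hand side of the statement, $-2e(-1)\omega+h(-1)e(-2)-h(-2)e(-1)$, has weight $3$; so extracting the coefficient of $z^{-1}$ as you describe cannot produce the claimed identity. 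Once you replace ``coefficient of $z^{-1}$'' by ``coefficient of $z^{0}$'' (equivalently, compute $e^{\delta}_{-1}\overline{\tau}^{+}$), the rest of your outline — keeping track of the cocycle sign and the contractions of $E^{-}(-\delta,z)$ against $\a(-1)+\b(-1)-2\delta(-1)$, then substituting (\ref{vazna-rel1}) and matching monomials over $e^{\a+\b}$ — closes the proof exactly as in the paper.
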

\begin{proof} By using our realization and standard calculation in
lattice vertex superalgebras we get
\bea && G^{+}(-\tfrac{3}{2}) \overline{G}^{+} (-\tfrac{3}{2}) {\1}
= e^{\delta}_{-1}(\a(-1) + \b(-1) -2 \delta(-1)) e^{\a+\b -\delta}
\nonumber \\
&&= \left( (\a(-1) + \b(-1) ) \delta(-1) - \delta(-1) ^{2} +
\delta(-2) \right) e^{\a + \b}. \label{rel-pom2} \eea
By using (\ref{vazna-rel1}) we get
\bea
&&G^{+}(-\tfrac{3}{2}) \overline{G}^{+} (-\tfrac{3}{2}) {\1} + 2
e(-1) \omega \nonumber \\
&& = \left( (\a(-1) + \b(-1) ) (- 2 \b(-1) + \delta (-1)) \right)
e^{\a + \b} + \left( 2 \beta(-2) - \delta(-2) \right) e^{\a + \b}
\nonumber \\
&& = h (-1) e(-2) - h(-2) e(-1), \nonumber \eea
and the lemma follows. \end{proof}

\section{Irreducible $V$--modules in the category $\mathcal{O}$ }

In this section we shall classify irreducible $V$--modules which are in the category $\mathcal{O}$ as modules for the $N=4$ superconformal algebra. This classification reduces to the classification of all irreducible highest weight modules for the Zhu's algebra.

So we shall first identify Zhu's algebra of $V$.
\begin{proposition} \label{zhu-str}
\item[(i)] Zhu's algebra $A(V)$ is isomorphic to a certain
quotient of $U(sl_2)$.

\item[(ii)] In Zhu's algebra $A(V)$ we have the following
relation:
$$ [e] ( [\omega] + \tfrac{1}{2}) = 0.$$
\end{proposition}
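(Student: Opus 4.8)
The plan is to identify $A(V)$ concretely and then read off the relation from a known relation among the generators of $V$. For part (i), I would first note that $V$ is strongly generated by the weight-one fields $J^0,J^+,J^-$ (spanning a copy of $sl_2$) together with the weight-$\tfrac32$ odd fields $G^\pm,\overline G^\pm$, the Virasoro field $\omega$, and their descendants. Since Zhu's algebra $A(V)$ is spanned by the images $[v]$ of elements $v\in V$, and the odd part $V_{\bar 1}$ lies entirely inside $O(V)$ when one uses the twisted Zhu algebra conventions of Section~2 (for a purely even module category one in fact gets that the odd fields contribute nothing to $A(V)$ because their $L(0)$-weights are in $\tfrac12+\Zp$), the only surviving generators are the images $[J^0],[J^\pm]$, and $[\omega]$. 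A standard Zhu-algebra computation shows $[\omega]$ is central and, via the Sugawara relation $\omega=(e(-1)f(-1)+f(-1)e(-1)+\tfrac12 h(-1)^2){\bf 1}$, that $[\omega]$ equals the image of the Casimir of $sl_2$ up to normalization; hence $A(V)$ is generated by $[J^0],[J^\pm]$ subject to the $sl_2$ relations, i.e.\ $A(V)$ is a quotient of $U(sl_2)$, with $[\omega]$ expressible inside that quotient.

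For part (ii), the key input is Lemma~\ref{lemma-c}, which gives
$$ G^{+}(-\tfrac{3}{2})\,\overline{G}^{+}(-\tfrac{3}{2}){\bf 1}= -2\,e(-1)\omega + h(-1)e(-2) - h(-2)e(-1). $$
The left-hand side is $\tau^+_{-2}\,\overline\tau^+$, a vector in $V_{\bar 0}$ of $L(0)$-weight $1+1+1=3$; more to the point, it is built from the odd generators and therefore, under the natural map $V\to A(V)$, I expect its image to vanish (it lies in $O(V)$, being a product of elements of odd weight). So I would first establish $[\,\tau^+_{-2}\overline\tau^+\,]=0$ in $A(V)$. Then I would compute the image of the right-hand side using the standard Zhu-algebra rules: $[a(-2)b]$ and $[a(-1)b]$ reduce via the $*$-product and the $\circ$-relations to expressions in $[a]$ and $[b]$. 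Specifically $[h(-1)e(-2)-h(-2)e(-1)]$ should, after applying $\mathrm{Res}_x Y(h,x)\frac{(1+x)^{\mathrm{wt}\,h}}{x}[e]$ and the corresponding $\circ$-relation for $h\circ e$, collapse to a multiple of $[h][e]$ or of $[e]$; and $[e(-1)\omega]=[\omega]*[e]$ or $[e]*[\omega]$ up to lower-order corrections. Using centrality of $[\omega]$ and the relation $\tau^+=e^\delta$, $\tau^-=f(0)e^\delta$ (so that $[e]$ is not itself zero), the identity Lemma~\ref{lemma-c} becomes, after dividing out, $[e]([\omega]+\tfrac12)=0$.

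The main obstacle I anticipate is the bookkeeping in the second step: correctly computing $[e(-1)\omega]$, $[h(-1)e(-2)]$ and $[h(-2)e(-1)]$ in $A(V)$ requires care with the formulas
$$a*_g b=\mathrm{Res}_x Y(a,x)\frac{(1+x)^{\mathrm{wt}(a)}}{x}b,\qquad a\circ_g b=\mathrm{Res}_x Y(a,x)\frac{(1+x)^{\mathrm{wt}(a)-1}}{x}b$$
(in the untwisted even case), in particular tracking the constant $\tfrac12$ that appears from the $(1+x)^{\mathrm{wt}}$ expansion when $\mathrm{wt}(e)=1$ and $\mathrm{wt}(\omega)=2$; the $\tfrac12$ in the final relation is exactly such a contribution. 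A secondary point requiring justification is the vanishing $[\tau^+_{-2}\overline\tau^+]=0$: one should verify directly that this vector is an $O(V)$-element, e.g.\ by writing it as $\tau^+\circ\overline\tau^+$ plus an $O(V)$-correction, or by using that $A(V)$ is purely even so any element expressible through odd modes maps to zero. Once these are in hand, the proposition follows.
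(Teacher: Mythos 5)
Part (i) of your proposal matches the paper's argument: $V$ is strongly generated by $e,h,f,\tau^{\pm},\overline{\tau}^{\pm}$, the odd generators die in $A(V)$, and $[e],[f],[h]$ satisfy the $sl_2$ relations (the paper does not even need to discuss $[\omega]$ separately, since $\omega$ is the Sugawara vector and $[\omega]$ is the image of the Casimir).

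Part (ii), however, has a genuine gap, and it sits exactly where the constant $\tfrac12$ is produced. You assert that the left-hand side of Lemma \ref{lemma-c}, $G^{+}(-\tfrac32)\overline{G}^{+}(-\tfrac32){\bf 1}$, lies in $O(V)$ "being a product of odd elements." It does not. What lies in $O(V)$ is
$$\tau^{+}\circ\overline{\tau}^{+}=\mathrm{Res}_x\,Y(\tau^{+},x)\frac{(1+x)^{3/2-1+1/2}}{x}\,\overline{\tau}^{+}
= G^{+}(-\tfrac32)\overline{G}^{+}(-\tfrac32){\bf 1}+G^{+}(-\tfrac12)\overline{G}^{+}(-\tfrac32){\bf 1},$$
and the "correction" term is $\tau^{+}_{0}\overline{\tau}^{+}=e(-2){\bf 1}$, whose class is $-[e]\neq 0$. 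Hence $[G^{+}(-\tfrac32)\overline{G}^{+}(-\tfrac32){\bf 1}]=[e]$, not $0$. The paper's proof runs the computation the other way around: Lemma \ref{lemma-c} gives $[G^{+}(-\tfrac32)\overline{G}^{+}(-\tfrac32){\bf 1}]=-2[e][\omega]$ (the terms $h(-1)e(-2)-h(-2)e(-1)$ vanish in $A(V)$, and $[e(-1)\omega]=[e]*[\omega]$ since $e(0)\omega=0$), while the $\circ$-relation above gives $[G^{+}(-\tfrac32)\overline{G}^{+}(-\tfrac32){\bf 1}]=-[e(-2){\bf 1}]=[e]$; equating yields $-2[e][\omega]-[e]=0$. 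If you instead set the left-hand side to zero as you propose, you would obtain $[e][\omega]=0$, i.e.\ the relation without the $+\tfrac12$. So the $\tfrac12$ does not come from the $(1+x)^{\mathrm{wt}}$ expansion in $[e(-1)\omega]$ as you anticipate; it comes precisely from the nonvanishing class of $\tau^{+}_{0}\overline{\tau}^{+}=e(-2){\bf 1}$. (A minor point: with the convention $Y(\tau^{+},z)=\sum_n G^{+}(n+\tfrac12)z^{-n-2}$ one has $G^{+}(-\tfrac32)=\tau^{+}_{-1}$, not $\tau^{+}_{-2}$.)
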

\begin{proof}  Since $V$ is strongly generated by $ \tau^{\pm}, \overline{\tau} ^{\pm}, e, h, f $ we have that Zhu's algebra $A(V)$ is generated by
 $$ [\tau ^{\pm} ], [{ \overline{\tau} } ^{\pm}], [e], [f], [h]. $$
 From the  definition of Zhu's
algebra for vertex operator superalgebras follows that
 $$ \tau ^{\pm}, \overline{\tau}^{\pm} \in O(V)$$ and therefore
 $[\tau^{\pm}] = [\overline{\tau}^{\pm}] = 0.$
This implies that $A(V)$ is generated by $[e], [f], [h] $ which satisfy commutation relations for $sl_2$. This proves assertion (i).
(Note that  Zhu's algebra of $N_{A_1} (k\Lambda_0)$  is
isomorphic to $U(sl_2)$.).

Now we shall prove relation (ii). By using Lemma \ref{lemma-c} we
get that in $A(V)$ we have
\bea  [G^{+}(-\tfrac{3}{2}) \overline{G}^{+} (-\tfrac{3}{2}) {\1}]
= -2 [e] [\omega]. \label{rel-z1} \eea
By definition of Zhu's algebra we have
\bea \label{rel-z2} [G^{+}(-\tfrac{3}{2}) \overline{G}^{+}
(-\tfrac{3}{2}) {\1}] + [G^{+}(-\tfrac{1}{2}) \overline{G}^{+}
(-\tfrac{3}{2}) {\1}] = 0. \eea
Since $[G^{+}(-\tfrac{1}{2}) \overline{G}^{+} (-\tfrac{3}{2})
{\1}] = [e(-2) \1] = - [e]$, relations (\ref{rel-z1}) and
(\ref{rel-z2}) gives that
$$- 2 [e][\omega] - [e] = 0,$$
which proves the proposition. \end{proof}

The previous proposition and Zhu's algebra theory imply that every irreducible $\tfrac{1}{2} \Zp$--graded $V$--module has the form
$ L_c ^{N=4} (U)$, where $U$ is a certain irreducible $U(sl_2)$--module annihilated by $[e] ([\omega]+1/2)$ and $ L_c ^{N=4} (U)$ is a $\tfrac{1}{2} {\Zp}$--graded $V$--module
$$  L_c ^{N=4} (U) = \bigoplus_{m \in \tfrac{1}{2} {\Zp} }  L_c ^{N=4} (U) (m), \quad  L_c ^{N=4} (U) (0) \cong U. $$

The classification of irreducible $V$--modules in the category $\mathcal{O}$ is then equivalent to the classification of
irreducible, highest weight modules for Zhu's algebra $A(V)$.

\begin{proposition} \label{class-zhu-1} Let $r \in {\Bbb C}$ and $U_r$ be the irreducible highest weight $sl_2$--module with highest weight
$r \omega_1$.
Then $U_0$ and $U_{-1}$ are the only irreducible, highest weight $A(V)$--modules. Therefore,
$ L_c ^{N=4} (U_0)$ and $L_c ^{N=4} (U_{-1})$ give a complete list of irreducible $V$--modules from the category
${\mathcal O}$.
\end{proposition}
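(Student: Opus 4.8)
The plan is to exploit Proposition 6.2, which tells us that $A(V)$ is a quotient of $U(sl_2)$ in which the relation $[e]([\omega]+\tfrac12)=0$ holds, together with the explicit realization of $V$ inside $M\otimes F$. An irreducible highest weight $A(V)$-module is an irreducible highest weight $sl_2$-module $U_r$ with highest weight $r\omega_1$ on which the extra relation $[e]([\omega]+\tfrac12)=0$ is satisfied. So the first step is to compute how $[\omega]$ acts on $U_r$: since $\omega$ is the Sugawara vector of $L_{A_1}(-\tfrac32\Lambda_0)$, on a highest weight $sl_2$-module of highest weight $r\omega_1$ we have $[\omega]$ acting on the highest weight vector by the scalar $\tfrac{r(r+2)}{2(k+2)}=\tfrac{r(r+2)}{2\cdot(1/2)}=r(r+2)$ with $k=-3/2$. (More precisely one should read off the action of $[\omega]$ as an element of $U(sl_2)$ — it is the Casimir-type element $\tfrac14[h]^2+\tfrac12[h]+[f][e]$ or a similar normalization — and evaluate it on the highest weight vector.)

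Next I would impose the relation. On the highest weight vector $v_r$ of $U_r$ we have $[e]v_r=0$, so the relation $[e]([\omega]+\tfrac12)=0$ gives no constraint at the top. The constraint must come from applying the relation to lower weight vectors, equivalently from the two-sided ideal generated by $[e]([\omega]+\tfrac12)$ in $U(sl_2)$: one moves $[e]$ past powers of $[f]$ acting on $v_r$, using $[e][f]^m v_r = m(r-m+1)[f]^{m-1}v_r$, and records that $[\omega]$ acts on $[f]^m v_r$ by the scalar $r(r+2)$ as well (since $\omega$ is central in the sense of commuting with $sl_2$ — it is the conformal vector — so $[\omega]$ is central in $A(V)$ and acts by a single scalar on the whole irreducible module $U_r$, namely $r(r+2)$). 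Hence on $[f] v_r$ the relation reads $[e][f]v_r\cdot(r(r+2)+\tfrac12)= r\,(r(r+2)+\tfrac12)\,v_r = 0$, forcing either $r=0$ or $r(r+2)+\tfrac12=0$. But the latter, combined with running through higher $[f]^m v_r$, must be consistent for all $m$; checking $[f]^2 v_r$ gives the same scalar factor $r(r+2)+\tfrac12$, so actually the surviving possibilities are $r=0$ or $r(r+2)+\tfrac12=0$, i.e. $r\in\{0,-1\pm\tfrac{1}{\sqrt2}\}$. To cut this down to $r\in\{0,-1\}$ one needs a further constraint beyond Proposition 6.2 — additional null vectors in $V$ coming from the fact that $V$ is the simple $N=4$ algebra (not the universal one), or from the conformal embedding of $L_{A_1}(-\tfrac32\Lambda_0)$. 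So I would next locate one more defining relation of $A(V)$: a singular vector of $L_{A_1}(-\tfrac32\Lambda_0)$ at the admissible level $k=-3/2$ projects to a relation in $A(L_{A_1}(-\tfrac32\Lambda_0))$, which is known (the admissible level Zhu algebra is the quotient of $U(sl_2)$ by the ideal making only $U_0$ and $U_{-1}$ allowed — indeed $L_{A_1}(-\tfrac32\Lambda_0)$ has exactly the admissible weights $0$ and $-\tfrac12\alpha$... wait, for $A_1$ at $k=-3/2$ the admissible highest weights are $0$ and $-\tfrac12$ in the appropriate normalization); restricting attention to $\Zp$-graded modules and using that $A(V)$ surjects onto a quotient of $A(L_{A_1}(-\tfrac32\Lambda_0))$ pins down $r\in\{0,-1\}$.

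Finally I would verify the converse: that $U_0$ and $U_{-1}$ genuinely give irreducible $V$-modules $L_c^{N=4}(U_0)$ and $L_c^{N=4}(U_{-1})$. For $U_0$ this is the adjoint module $V$ itself (the vacuum), so nothing to prove. For $U_{-1}$ one exhibits the module concretely — e.g. as the submodule of $\mathcal{M}\otimes F$ for a suitable $M$-module $\mathcal{M}$ with top component the two-dimensional $sl_2$-module, and checks it is $\Zp$-graded, irreducible, and that the $N=4$ relations with $c=-9$ hold on it (which follows from the realization, as in Lemma 5.1). The main obstacle I anticipate is exactly the step that eliminates the spurious roots $r=-1\pm\tfrac1{\sqrt2}$: Proposition 6.2 alone is not enough, and one must produce (or invoke from the literature on admissible $\widehat{sl_2}$ at $k=-3/2$, e.g. \cite{AM}) the extra singular-vector relation in $A(V)$ that is responsible. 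Once that relation is in hand, the rest is the routine Zhu-algebra bookkeeping sketched above together with Theorem 2.1.
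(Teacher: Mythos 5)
There is a genuine gap, and it stems from a factor-of-two error in the eigenvalue of $[\omega]$. With the paper's normalization $\omega=(e(-1)f(-1)+f(-1)e(-1)+\tfrac12 h(-1)^2)\mathbf 1$ (the Sugawara vector at $k+h^\vee=\tfrac12$, so the usual prefactor $\tfrac{1}{2(k+h^\vee)}$ equals $1$), the element $[\omega]$ acts on $U_r$ by the Casimir scalar $\tfrac{r(r+2)}{2}$, not $r(r+2)$. Consequently
$$[\omega]+\tfrac12 \;=\; \tfrac{r(r+2)}{2}+\tfrac12\;=\;\tfrac{(r+1)^2}{2},$$
a perfect square with the single root $r=-1$. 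Since $[\omega]$ is central, the relation $[e]([\omega]+\tfrac12)=0$ forces either $r=-1$ or $[e]U=0$; the latter (via $[h]=[e][f]-[f][e]$) forces $U=U_0$. So Proposition \ref{zhu-str}(ii) alone already pins down $r\in\{0,-1\}$, and the ``spurious roots'' $r=-1\pm\tfrac1{\sqrt2}$ that drive the second half of your argument are an artifact of the miscalculation. This matters because the extra ingredient you then invoke --- an additional singular-vector relation in $A(V)$ coming from the admissible $\widehat{sl_2}$ structure --- is never actually produced in your proof; you only assert that it exists and would do the job. As written, the argument is therefore incomplete even on its own terms, whereas the corrected arithmetic makes that whole detour unnecessary.

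For the converse (existence) part you are close to the paper: $U_0$ is the top level of $V$ itself, and $U_{-1}$ is realized concretely as the lowest component of the $V$--module $M\otimes F$, namely $(M\otimes F)(0)=U(sl_2).e^{-\delta}\cong U_{-1}$ (two-dimensional). You should state this explicitly rather than leaving it as ``a suitable $\mathcal M\otimes F$''; no verification of the $N=4$ relations is needed beyond the fact that $M\otimes F$ is already known to be a $V$--module.
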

\begin{proof}
Let $U=U_r$ be an irreducible, highest weight module for $A(V)$ and $ L_c ^{N=4} (U)$ the associated $V$--module.
Then
$$L(0) u =[\omega]. u = \frac{r (r+2)}{2} u \quad \forall u \in U. $$
This implies that
$$ [e] ([\omega]+ 1/2 ) U = 0 \implies U = 0 \quad \mbox{or}  \quad U = U_{-1}. $$
Clearly, $U_0$ is an $A(V)$--module since it is lowest component of $V$. $U_{-1}$ is also a module for  Zhu's algebra since the
lowest component of $M\otimes F$ is $( M \otimes F )(0) = U(sl_2) . e ^{-\delta} \cong U_{-1} $.
The proof follows.
\end{proof}

\begin{theorem} \label{simplicity}
$V$ is a simple vertex operator superalgebra, i.e., $V \cong L_c
^{N=4}$ with $c=-9$.
\end{theorem}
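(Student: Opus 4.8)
The plan is to show that $V = U(\mathcal A).{\bf 1}$ is simple as a vertex operator superalgebra by showing it has no nonzero proper ideal. Since $V$ is $\tfrac12\Zp$--graded with one--dimensional top component $V(0) = \C{\bf 1}$, any proper ideal $I$ must have $I \cap V(0) = 0$, hence $I$ lies in the augmentation part $\bigoplus_{m>0} V(m)$. The standard approach is to use the classification of irreducible $V$--modules via Zhu's algebra: by Proposition \ref{zhu-str} and Proposition \ref{class-zhu-1} the maximal proper submodule $V^{\mathrm{sing}}$ of $V$ (regarded as a module over itself) is such that $V/V^{\mathrm{sing}} \cong L_c^{N=4}(U_0) = L_c^{N=4}$, since the top of $V$ is $U_0$. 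So it suffices to prove $V^{\mathrm{sing}} = 0$, i.e. that $V$ itself is already irreducible as a module over $U(\mathcal A)$, equivalently over $V$.

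First I would identify, inside the ambient Clifford--Weyl vertex superalgebra $M\otimes F$, what $V$ actually is. Since $L_{A_1}(-\tfrac32\Lambda_0)\subset V$ and $\Psi = e^\delta \in V$ (shown in Section \ref{real-sl2}), and since $V$ is generated by the $N=4$ fields, I would argue $V \subseteq (M\otimes F)^{int}$, the maximal $sl_2$--integrable submodule. The key point is to prove the reverse inclusion $(M\otimes F)^{int} \subseteq V$, or at least that $(M\otimes F)^{int}$ is irreducible as an $\mathcal A$--module with top $U_0$, which then forces $(M\otimes F)^{int} = V$ and simultaneously gives simplicity. Concretely, $M\otimes F$ decomposes under the Heisenberg/lattice action as $\bigoplus_{j\in\Z} M\otimes M_\delta(1).e^{j\delta}$, a direct sum of Wakimoto modules for $\widehat{sl_2}$ of level $-\tfrac32$; the integrable part picks out, in each charge sector, the maximal integrable submodule of the corresponding Wakimoto module. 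Using that the Wakimoto module $M\otimes M_\delta(1).e^{j\delta}$ has a known socle/integrable filtration (here one invokes the screening operator $Q$: $(M\otimes F)^{int}$ should coincide with $\mathrm{Ker}_{M\otimes F}\,\widetilde Q$ intersected with the integrable part, or be controlled by $Q$), I would show each graded piece of $(M\otimes F)^{int}$ is generated from the top by the $N=4$ action.

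Alternatively, and more in the spirit of Zhu theory, here is the cleaner route I would actually carry out. Let $J$ be the maximal proper ideal of $V$, so $L_c^{N=4} = V/J$. Suppose $J\neq 0$; then $J$ contains a nonzero $\mathcal A$--singular vector $v$ of some weight $m>0$, which generates a highest weight $\mathcal A$--submodule, hence (descending to Zhu's algebra) a nonzero $A(V)$--module appearing as a subquotient at the top of $J$. But every irreducible $A(V)$--module is $U_0$ or $U_{-1}$ by Proposition \ref{class-zhu-1}. The weight of $U_0$ is $0$ and of $U_{-1}$ is $\tfrac{(-1)(1)}{2} = -\tfrac12$; a singular vector inside $\bigoplus_{m>0}V(m)$ would have to generate a module whose top has $L(0)$--eigenvalue $\ge 1/2 > -1/2 \ge$ the eigenvalues available, and more precisely one checks the $J^0(0),L(0)$--weight of such a vector is incompatible with being a highest weight vector for an $A(V)$--module isomorphic to $U_0$ or $U_{-1}$ unless it vanishes. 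Thus $J = 0$.

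The main obstacle is the last step: ruling out a nonzero singular vector in the augmentation ideal. The soft Zhu--theoretic argument needs the extra input that no nontrivial self--extension or subsingular vector hides in $V$ — i.e. that the grading of $V$ is "tight" enough that a singular vector at positive weight really would produce an admissible $A(V)$--module contradicting the classification. I expect this is where the free--field realization must be used: one shows directly in $M\otimes F$ that $V = U(\mathcal A).{\bf 1}$ contains no $\mathcal A$--singular vectors other than ${\bf 1}$, by exploiting that $\tau^+ = \Psi = e^\delta$ together with the Weyl algebra $M$ generates, under the $N=4$ action, all of $(M\otimes F)^{int}$, and that $(M\otimes F)^{int}$ has irreducible top $U_0$ as a module for the Zhu algebra. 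Granting the realization $V = (M\otimes F)^{int}$ and the explicit description of integrable submodules of Wakimoto modules at level $-\tfrac32$, simplicity follows because an ideal would restrict to an $\widehat{sl_2}$--submodule of each Wakimoto summand contained in the integrable part, and these are generated by their tops under the $N=4$ action.
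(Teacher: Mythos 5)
Your middle (``cleaner route'') paragraph is essentially the paper's proof: one takes a nonzero proper graded ideal $I=\bigoplus_{n\in\tfrac{1}{2}\Zp}I(n_0+n)$ with $I(n_0)\neq 0$ and $n_0>0$, observes that the lowest component $I(n_0)$ is automatically a module over Zhu's algebra $A(V)$, and derives a contradiction from Proposition \ref{zhu-str}. The ``main obstacle'' you flag in your last paragraph is not actually there, and no free-field input, singular-vector hunt, or worry about subsingular vectors and self-extensions is needed: the relation $[e]([\omega]+\tfrac{1}{2})=0$ of Proposition \ref{zhu-str}(ii) holds on \emph{all} of $I(n_0)$, where $[\omega]$ acts as the scalar $n_0\neq -\tfrac{1}{2}$, so $[e]$ annihilates $I(n_0)$; then $[h]=[e][f]-[f][e]$ annihilates it as well, hence the Casimir $[e][f]+[f][e]+\tfrac{1}{2}[h]^2$ acts by $0$, and since this Casimir equals $[\omega]$ under the normalization $2(k+2)=1$, one gets $n_0=0$, contradicting $n_0>0$. (This is exactly the dichotomy ``$e(0)I(n_0)=0$ or $L(0)|_{I(n_0)}=-\tfrac{1}{2}\,\mathrm{Id}$'' in the paper, both branches being impossible.) One caution about your first and last paragraphs: the identification $V\cong(M\otimes F)^{int}\cong\mbox{Ker}_{M\otimes F}\widetilde{Q}$ is Corollary \ref{characterization}, which the paper \emph{deduces from} simplicity, so taking it as an input here would invert the logical order unless you prove it independently.
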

\begin{proof}Assume that $V$ is not simple. Then $V$ contains an graded
ideal $I \subset V$, $ 0 \ne I \ne V$ such that
$$ I = \bigoplus_{ n \in {\hf} {\Zp} } I(n+n_0), \quad L(0) \vert
I(r) \cong r \mbox{Id}, \quad I(n_0) \ne 0. $$
Since $I \ne V$, we have that $n_0 > 0$. Then the  lowest component
$I(n_0)  $ has to be a module for the Zhu's algebra $A(V)$.
Proposition \ref{zhu-str} then implies that either  $e (0)
I(n_0)=0$ or $L(0) \vert  I(n_0) =-\tfrac{1}{2} \mbox{Id}$. But
both cases lead to a contradiction. \end{proof}

We notice that $\delta(0)$ acts semi-simply on $V$ and it defines the ${\Z}$--graduation:
$$ V = \bigoplus_{\ell \in {\Z} } V ^{(\ell)}, \quad V^{(\ell) } = \{  v \in V \ \vert \ \delta(0) v = \ell v \}.$$
The simplicity of $V$ and the fact that
\bea && V^{(\ell_1)} \cdot V^{(\ell_2)} \subset V^{(\ell_1 + \ell_2)} \quad \forall \ell_1, \ell_2 \in {\Z} \label{grad-v} \eea
implies:

\begin{corollary}
$ V^{(0)}$ is a simple vertex algebra of central charge $c=-9$ and each $V^{(\ell)}$ is a simple $V^{(0)}$--module.
\end{corollary}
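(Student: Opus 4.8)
The plan is to derive the Corollary directly from the simplicity of $V$ (Theorem \ref{simplicity}) and the grading identity (\ref{grad-v}), reducing both claims to one statement: \emph{if $\ell \in \Z$ and $W$ is a nonzero $V^{(0)}$-submodule of $V^{(\ell)}$ (regarded as a $V^{(0)}$-module), then $W = V^{(\ell)}$.} Before addressing this I would check it is not vacuous, i.e.\ that each $V^{(\ell)}$ is nonzero: from the realization in Section \ref{real-sl2} one has $\delta(0)\tau^{+} = \langle\delta,\delta\rangle\tau^{+} = \tau^{+}$ (since $\langle\delta,\delta\rangle = 2/p = 1$ when $p=2$), so $\tau^{+} = e^{\delta}\in V^{(1)}$ and, likewise, $\overline{\tau}^{+}\in V^{(-1)}$; more generally, iterated products of these vectors in the lattice vertex superalgebra $V_{L}$ (for instance $(e^{\delta})_{-\ell-1}e^{\ell\delta}$ equals $e^{(\ell+1)\delta}$ up to a nonzero cocycle scalar, and analogously on the negative side with $\overline{\tau}^{+}$) exhibit a nonzero vector in every $V^{(\ell)}$, while ${\bf 1}\in V^{(0)}$.

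For the core statement, fix $\ell$ and a nonzero $V^{(0)}$-submodule $W\subseteq V^{(\ell)}$. Recall the standard fact, a consequence of the Jacobi/associativity identity in a vertex superalgebra, that the ideal of $V$ generated by $W$ is already $\widehat W := \mathrm{span}\{v_{n}w : v\in V,\ n\in\Z,\ w\in W\}$, so that no iteration of modes is needed. Since $V$ is simple by Theorem \ref{simplicity} and $\widehat W\neq 0$, we obtain $\widehat W = V$. Now I would decompose $\widehat W$ along the $\delta(0)$-grading: by (\ref{grad-v}) we have $v_{n}w\in V^{(m+\ell)}$ whenever $v\in V^{(m)}$, so the degree-$\ell$ component of $\widehat W$ is precisely $\mathrm{span}\{v_{n}w : v\in V^{(0)},\ n\in\Z,\ w\in W\}$, which equals $W$ because $W$ is a $V^{(0)}$-submodule (and $w = {\bf 1}_{-1}w$). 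Comparing with $\widehat W = V = \bigoplus_{m}V^{(m)}$ then forces $W = V^{(\ell)}$.

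Taking $\ell = 0$, where a $V^{(0)}$-submodule of the adjoint module $V^{(0)}$ is the same thing as an ideal of the vertex algebra $V^{(0)}$, gives that $V^{(0)}$ is simple; its central charge is $-9$ because $\omega\in V^{(0)}$. Taking arbitrary $\ell$ gives that each $V^{(\ell)}$ is an irreducible $V^{(0)}$-module. I expect the only point deserving care to be the invocation of the single-layer description of the ideal generated by $W$, which is where the associativity identity is used; everything else is bookkeeping with the $\delta(0)$-grading, and I foresee no real obstacle.
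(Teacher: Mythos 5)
Your proof is correct and follows exactly the route the paper intends: the paper offers no written argument beyond asserting that the simplicity of $V$ together with the grading relation (\ref{grad-v}) implies the corollary, and your elaboration — passing to the ideal $\widehat W$ generated by a nonzero $V^{(0)}$-submodule $W\subseteq V^{(\ell)}$, invoking the standard single-layer description of generated ideals, and comparing $\delta(0)$-homogeneous components — is the standard way to fill in that assertion. The preliminary check that each $V^{(\ell)}$ is nonzero (via $\tau^{+}=e^{\delta}$ and its iterated products) is a sensible addition that the paper leaves implicit.
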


\begin{corollary} \label{characterization} Let $c=-9$. We have the following isomorphisms of vertex superalgebras:
$$  L_c ^{N=4} \cong ( M \otimes F
)^{int} \cong  \mbox{Ker} _{ M \otimes F}\widetilde{Q }.$$ \end{corollary}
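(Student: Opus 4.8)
The plan is to prove the chain of isomorphisms $L_c^{N=4}\cong V\cong (M\otimes F)^{int}\cong \mathrm{Ker}_{M\otimes F}\widetilde Q$. The first isomorphism $L_c^{N=4}\cong V$ is already Theorem \ref{simplicity}, so what remains is to identify $V$ with the two other subalgebras of $M\otimes F$. First I would establish $V\subseteq (M\otimes F)^{int}$: we already know $L_{A_1}(-\tfrac32\Lambda_0)\subseteq (M\otimes F)^{int}$ (since $\widehat{sl_2}$ acts by derivations and $e,h,f$ generate $L_{A_1}(-\tfrac32\Lambda_0)$), and by \eqref{integ} we have $f(0)^2 e^\delta=0$, so $\tau^+=e^\delta\in(M\otimes F)^{int}$; since $V=U(\mathcal A).{\bf 1}$ is generated by $e,h,f,\tau^{\pm},\overline\tau^{\pm}$ and the $\overline\tau^{\pm},\tau^-$ are obtained from $\tau^+$ by applying $Q$ and $f(0)$, all of which preserve the maximal $sl_2$--integrable submodule, we get $V\subseteq (M\otimes F)^{int}$.

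Next I would show $(M\otimes F)^{int}\subseteq \mathrm{Ker}_{M\otimes F}\widetilde Q$. For this it suffices to check that $\widetilde Q$ kills every $sl_2$--integrable vector. The operator $\widetilde Q=\mathrm{Res}_z Y(e^{-(\alpha+\beta-2\delta)/2},z)$ lowers the $\delta$--charge; one computes the $\widehat{sl_2}$--weight of the corresponding screening field $e^{-(\alpha+\beta-2\delta)/2}$ and observes that it is not a weight of any finite-dimensional $sl_2$--module appearing in $M\otimes F$ in the relevant graded pieces — equivalently, $\widetilde Q$ intertwines the $\widehat{sl_2}$--action (since it commutes with it, as noted after \eqref{scr-gen}) and shifts a vector of a finite-dimensional $sl_2$--multiplet to a vector outside the integrable part, so on integrable vectors it must vanish by weight considerations together with the structure of $M\otimes F$ as a sum of Wakimoto modules. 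The cleanest route is: $M\otimes F=\bigoplus_{j\in\Z}M\otimes M_\delta(1).e^{j\delta}$, $\widetilde Q$ maps the $j$--th summand to the $(j-1)$--th, and the $sl_2$--integrable part of each Wakimoto-type summand is explicitly a single finite-dimensional $sl_2$--module on which, by the commutation of $\widetilde Q$ with $f(0),e(0),h(0)$ and a direct check on a highest-weight vector, $\widetilde Q$ acts as $0$.

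Finally I would close the loop by proving $\mathrm{Ker}_{M\otimes F}\widetilde Q\subseteq V$. Here I would use the $\tfrac12\Z$--grading by $L(0)$ together with the $\Z$--grading by $\delta(0)$, and argue degree by degree: $\mathrm{Ker}_{M\otimes F}\widetilde Q$ is a vertex subalgebra containing $V$, and its lowest-degree components are controlled by the classification of irreducible modules for Zhu's algebra $A(V)$ from Propositions \ref{zhu-str} and \ref{class-zhu-1}. Since $V$ is simple (Theorem \ref{simplicity}) and $\mathrm{Ker}_{M\otimes F}\widetilde Q$ is a module for $V$ with the same lowest component $(M\otimes F)(0)$-content, any proper extension would produce either a new irreducible $A(V)$--module or a copy of one already accounted for, and by comparing graded dimensions with the character of $(M\otimes F)^{int}$ one concludes $\mathrm{Ker}_{M\otimes F}\widetilde Q=(M\otimes F)^{int}=V$. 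The main obstacle is precisely this last inclusion: showing that $\mathrm{Ker}_{M\otimes F}\widetilde Q$ is not strictly larger than $(M\otimes F)^{int}$ requires either an explicit character/graded-dimension computation in the $\beta\gamma bc$ system or a cohomological argument that the only $\widetilde Q$--closed vectors are the $sl_2$--integrable ones; I expect the bookkeeping here (identifying which Wakimoto summands survive in the kernel and matching them against the integrable submodule) to be the technically delicate part.
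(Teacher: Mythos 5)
Your first inclusion $V\subseteq (M\otimes F)^{int}$ is exactly the paper's, but the remaining two links of your cyclic chain both have real problems. For the middle step, your description of $\widetilde Q$ is wrong: $\widetilde Q=\mathrm{Res}_z Y(e^{-\frac{1}{2}(\alpha+\beta)+\delta},z)$ shifts the $(\alpha+\beta)$--charge by $-\tfrac12$, so it maps $M\otimes F$ \emph{out of} $M\otimes F$ into the sector $V_{L-\frac{1}{2}(\alpha+\beta)}$ (the module $S\Pi(-\tfrac12)$ of the logarithmic section), not ``from the $j$--th Wakimoto summand to the $(j-1)$--th.'' Since $\widetilde Q$ commutes with the whole $\widehat{sl_2}$--action it preserves $h(0)$--weights, so ``weight considerations'' cannot make it vanish on integrable vectors; and the statement that the integrable part of a Wakimoto summand is ``a single finite-dimensional $sl_2$--module'' is false (the $j=0$ summand already contains all of $L_{A_1}(-\tfrac32\Lambda_0)$). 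The inclusion $(M\otimes F)^{int}\subseteq\mathrm{Ker}_{M\otimes F}\widetilde Q$ is true, but the honest reason is that the target $S\Pi(-\tfrac12)$ is a relaxed module with no nonzero $sl_2$--integrable vectors, which requires its own argument and is in any case not needed. For the final step, which you yourself flag as the delicate one, you do not actually give a proof: the character-comparison or cohomological machinery you propose is not what closes the argument.

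The paper's proof is much shorter and does not use a chain of inclusions at all. It observes that $V$ is contained in each of $(M\otimes F)^{int}$ and $\mathrm{Ker}_{M\otimes F}\widetilde Q$ (the latter because $\widetilde Q$ is a screening commuting with $\widehat{sl_2}$ and annihilating the generators, cf.\ the definition of ${\mathcal V}^{(p)}$ in Section \ref{real-sl2}), and then runs, for each of the two overalgebras separately, the same argument as in Theorem \ref{simplicity}: each is a $\tfrac12\Zp$--graded $V$--module, and a proper graded $V$--submodule would have a lowest component that is an $A(V)$--module contradicting the relation $[e]([\omega]+\tfrac12)=0$ of Proposition \ref{zhu-str} together with the classification in Proposition \ref{class-zhu-1}. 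Hence each is simple as a $V$--module and, containing ${\bf 1}$, must equal $V$. You correctly identified the Zhu-algebra classification as the key tool, but you never actually deploy it to establish either $(M\otimes F)^{int}=V$ or $\mathrm{Ker}_{M\otimes F}\widetilde Q=V$; as written, the proposal leaves the corollary unproved.
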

\begin{proof}
Since all generators of $V$ belong to the vertex superalgebra $( M
\otimes F )^{int}$, we have that $V$ is its  subalgebra. As in the
previous theorem we conclude that $( M \otimes F )^{int}$ must be
simple as $V$--module. Similarly we prove that $V \cong \mbox{Ker} _{ M \otimes F}\widetilde{Q }$.
\end{proof}

Let:
$$ SC\Lambda (1) = (M \otimes F) ^{int} = V, \quad SC\Pi (1) = (M \otimes F) / SC\Lambda(1). $$

\begin{proposition}
\item[(1)] $SC \Pi (1)$ is an irreducible $U(\mathcal A)$--module
from the category $\mathcal{O}$.

\item[(2)] $SC\Lambda(1)$ and $SC\Pi(1)$ give all non-isomorphic
$V$--modules from the category $\mathcal{O}$.

\item[(3)] $M \otimes F$ is an indecomposable $V$--module.
\end{proposition}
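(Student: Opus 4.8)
The plan is to analyze the quotient module $SC\Pi(1) = (M\otimes F)/SC\Lambda(1)$ explicitly, using the already-established facts that $SC\Lambda(1) = (M\otimes F)^{int}$ is the maximal $sl_2$--integrable submodule of $M\otimes F$ and that $SC\Lambda(1) = \mbox{Ker}_{M\otimes F}\widetilde Q$. First I would recall the decomposition of $M\otimes F$ as an $L_{A_1}(-\tfrac32\Lambda_0)$--module: by the discussion at the end of Section~\ref{real-sl2}, $M\otimes F = M\otimes F_1 = \bigoplus_{j\in\Z} M\otimes M_\delta(1).e^{j\delta}$, a direct sum of Wakimoto modules for $\widehat{sl_2}$. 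The lowest conformal weight space of $M\otimes F$ with respect to $L(0)$ is $(M\otimes F)(0) = U(sl_2).e^{-\delta}\cong U_{-1}$, as noted in the proof of Proposition~\ref{class-zhu-1}. Since $SC\Lambda(1)$ is generated by the vacuum and all the $N=4$ generators (all of conformal weight $\ge 1$), the vector $e^{-\delta}$ of conformal weight $0$ does \emph{not} lie in $SC\Lambda(1)$, so its image generates a nonzero lowest component of $SC\Pi(1)$.

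For part (1), I would argue that $SC\Pi(1)$ is a highest weight $U(\mathcal A)$--module by showing its lowest component is $U_{-1}$: the operators $G^\pm(-\tfrac12), \overline G^\pm(-\tfrac12), J^\pm(0), J^0(0)$ act on the image of $e^{-\delta}$, and by weight/degree counting in the $\Z$--grading by $\delta(0)$ (equation~(\ref{grad-v})) together with the explicit free-field formulas for $\tau^\pm, \overline\tau^\pm$ in $M\otimes F$, one checks that the odd generators $G^\pm, \overline G^\pm$ at mode $-\tfrac12$ either annihilate $e^{-\delta}$ modulo $SC\Lambda(1)$ or land back inside $SC\Lambda(1)$; e.g.\ $\tau^+ = \Psi = e^\delta$ has $\delta(0)$--weight $+1$, and $\tau^+_0 e^{-\delta}$ has $\delta(0)$--weight $0$ and conformal weight $0$, hence lies in $(M\otimes F)(0) = U(sl_2).e^{-\delta}$; but one must check this image is actually in $SC\Lambda(1)$ (it should be, since it is obtained by applying an $N=4$ mode). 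Then $SC\Pi(1)$ is a quotient of the generalized Verma-type $\mathcal A$--module built on $U_{-1}$, namely $L_c^{N=4}(U_{-1})$, and by Proposition~\ref{class-zhu-1} (which lists $L_c^{N=4}(U_0)$ and $L_c^{N=4}(U_{-1})$ as the complete list of irreducible modules in $\mathcal O$) it suffices to show $SC\Pi(1)$ is irreducible. Irreducibility I would get by a screening-operator argument: $\widetilde Q$ acts on $M\otimes F$ with kernel exactly $SC\Lambda(1)$, so $\widetilde Q$ induces an injective map $SC\Pi(1)\hookrightarrow M\otimes F$ whose image $\widetilde Q(M\otimes F)$ is an $L_{A_1}(-\tfrac32\Lambda_0)$--submodule; combined with the fact that $(M\otimes F)(0)$ is the irreducible $U(sl_2)$--module $U_{-1}$ and $L(0)$--grading arguments, one deduces that $SC\Pi(1)$ has no proper nonzero $V$--submodule.

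For part (2), once $SC\Pi(1)\cong L_c^{N=4}(U_{-1})$ is established, combine with Theorem~\ref{simplicity} ($SC\Lambda(1) = V\cong L_c^{N=4}(U_0)$, the vacuum module) and Proposition~\ref{class-zhu-1}: these are exactly the two irreducible $V$--modules in category $\mathcal O$, so $\{SC\Lambda(1), SC\Pi(1)\}$ is the complete list. For part (3), indecomposability of $M\otimes F$: suppose $M\otimes F = A\oplus B$ as a $V$--module. Each summand is an object of category $\mathcal O$ (it has an $L(0)$--grading bounded below, coming from $M\otimes F$, with finite-dimensional weight spaces), hence by part (2) is built from copies of $SC\Lambda(1)$ and $SC\Pi(1)$. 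The vacuum $\bf 1$ must lie in one summand, say $A$, so $SC\Lambda(1)\subset A$; then $B$ injects into the quotient $SC\Pi(1)$, which is irreducible, so $B$ is either $0$ or all of $SC\Pi(1)$. If $B\cong SC\Pi(1)$ then $M\otimes F$ would contain a vector of conformal weight $0$ outside $SC\Lambda(1)$ that is itself $N=4$--highest weight and generates a submodule complementary to $SC\Lambda(1)$; one rules this out by exhibiting a nonsplit extension, e.g.\ showing that $\overline G^-(-\tfrac12)$ or $f(0)$ applied appropriately maps the would-be highest weight vector $e^{-\delta}$ (or rather a representative) into $SC\Lambda(1)$ nontrivially, i.e.\ $e^{-\delta}$ is not annihilated but its non-trivial descendant lies in $SC\Lambda(1)$, contradicting the direct-sum splitting. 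Concretely, I expect the relation $f(0)e^{-\delta}$ or an $N=4$ lowering mode acting on the degree-$0$ space produces something in $M\otimes F$ of positive conformal weight that is forced into $SC\Lambda(1)$, obstructing the splitting.

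The main obstacle I anticipate is part (3): proving genuine indecomposability (rather than mere non-semisimplicity as an abstract statement) requires pinning down the precise linking between the weight-$0$ subspace $U(sl_2).e^{-\delta}$ and the integrable submodule $SC\Lambda(1)$, i.e.\ producing an explicit vector in $SC\Lambda(1)$ that is an $\mathcal A$--descendant of a vector in $(M\otimes F)\setminus SC\Lambda(1)$. This is essentially a computation with the free-field formulas for $G^\pm, \overline G^\pm$ acting on $e^{-\delta}$ and its $sl_2$--conjugates $\alpha(-1)e^{-\delta}$, $\alpha(-1)^2 e^{-\delta}$, analogous to Lemma~\ref{lemma-c}; the delicate point is checking that no choice of complement can be $\mathcal A$--stable, which I would phrase as: the extension class of $0\to SC\Lambda(1)\to M\otimes F\to SC\Pi(1)\to 0$ in $\mathrm{Ext}^1_V(SC\Pi(1),SC\Lambda(1))$ is nonzero. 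A second, lesser obstacle is justifying that every $V$--submodule of $M\otimes F$ lies in category $\mathcal O$ so that the classification of part (2) applies — this follows from the free-field realization giving $M\otimes F$ an $L(0)$--grading bounded below with finite-dimensional graded pieces, but it should be stated carefully.
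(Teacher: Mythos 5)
Your overall architecture is the same as the paper's: item (2) is read off from Proposition~\ref{class-zhu-1}, item (1) is proved by the Zhu--algebra argument of Theorem~\ref{simplicity} applied to the quotient, and item (3) reduces to showing the extension $0\to SC\Lambda(1)\to M\otimes F\to SC\Pi(1)\to 0$ does not split. But several concrete points need repair. First, the conformal weight of $e^{-\delta}$ is $-\tfrac12$, not $0$: the $\delta$--part of $\omega$ is $\tfrac12\delta(-1)^2-\delta(-2)$, so $L(0)e^{m\delta}=(\tfrac{m^2}{2}+m)e^{m\delta}$, consistent with $U_{-1}$ having lowest energy $\tfrac{r(r+2)}{2}=-\tfrac12$; thus $(M\otimes F)(0)=U(sl_2).e^{-\delta}$ is the $L(0)$--eigenspace for the eigenvalue $-\tfrac12$. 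Second, your claim that $\tau^{+}_{0}e^{-\delta}$ ``lies in $(M\otimes F)(0)=U(sl_2).e^{-\delta}$'' is false: since $\la\delta,-\delta\ra=-1$, one computes $\tau^{+}_{0}e^{-\delta}=e^{\delta}_{0}e^{-\delta}=\pm{\bf 1}$, a vector of conformal weight $0$ lying in $SC\Lambda(1)$ but certainly not in the bottom level. This one-line computation is exactly the ``main obstacle'' you defer in (3): if $M\otimes F=A\oplus B$ with both summands nonzero, then (by the length-two composition series) one summand, say $B$, is isomorphic to $SC\Pi(1)$ and must contain the entire $L(0)$--eigenspace for $-\tfrac12$, because $SC\Lambda(1)=V$ has weights in $\tfrac12\Zp$; hence $e^{-\delta}\in B$, hence ${\bf 1}=\pm\,\tau^{+}_{0}e^{-\delta}\in B$, hence $B\supset U(\mathcal A).{\bf 1}=SC\Lambda(1)$, contradicting $A\cap B=0$. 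So the nonsplitting you correctly identify as the crux is immediate from the lattice realization, and no choice of modes such as $f(0)$ or $\overline{G}^{-}(-\tfrac12)$ is needed.

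Third, the irreducibility argument for $SC\Pi(1)$ via $\widetilde Q$ does not work as stated: $\widetilde{Q}=\mbox{Res}_z Y(e^{-\frac12(\alpha+\beta)+\delta},z)$ shifts the lattice grading by $-\tfrac12(\alpha+\beta)$, so it maps $M\otimes F$ into $S\Pi(-1/2)$, not into $M\otimes F$; you would obtain an embedding of $SC\Pi(1)$ into a different $V$--module whose relevant submodule structure is not yet known, so nothing is gained. The route the paper intends (and which you already have all the ingredients for) is to repeat the proof of Theorem~\ref{simplicity}: the $-\tfrac12$--eigenspace of $SC\Pi(1)$ is the irreducible $A(V)$--module $U_{-1}$, so a proper graded submodule would have its lowest component in strictly higher degree; that component carries an $A(V)$--module structure, and the relation $[e]([\omega]+\tfrac12)=0$ forces either $e(0)=0$ or $L(0)=-\tfrac12$ on it, both of which are excluded by inspection of the explicit realization. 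With that replacement, and with the weight $-\tfrac12$ corrections above, your proof goes through and agrees with the paper's (very terse) argument.
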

\begin{proof}
Proof of assertion (i) uses the structure of Zhu's algebra $A(V)$ and it is similar to that of Theorem \ref{simplicity}.
Assertion (ii) follows from  Proposition \ref{class-zhu-1}, and (iii) from the fact that
$M \otimes F$ is not semi-simple $V$--module.
\end{proof}

\section{Irreducible $V$--modules outside of the category $\mathcal{O}$ }
\label{weight}
We have seen in the previous section that $V$ has only two irreducible modules in the category $\mathcal{O}$.
But modules from the
category $\mathcal{O}$ don't form a semisimple category. In this section we shall classify all irreducible,
$1/2 {\Zp}$--graded
 $V$--modules in the category of weight modules.

 We say that a $U(\mathcal A)$--module $M$ is a weight module if $L(0)$ and $h(0)$ act semisimply on $M$.

For every $(\mu,r) \in {\Bbb C}$ we define the following $U(sl_2)$--module $U_{\mu,r}$.
As a vector space $$ U_{\mu ,r} = {\mbox span}_{\C} \{ E_i, \ i \in {\Z} \},$$
the $sl_2$ action is defined by
\bea
 e E_i &=& E_{i-1} \nonumber \\
 h E_i & = & (- 2 r - 2 i + \mu) E_i \nonumber \\
 f E_i & = & -(r+i+1) (r+i-\mu)  E_{i+1}. \nonumber
\eea
Let $\Omega = e f + f e + \frac{1}{2} h^2 $ be the Casimir element of $U(sl_2)$. Then  $\Omega w = \frac{\mu  (\mu+2)}  {2} w$ for every $w \in U_{\mu ,r}$.

\begin{lemma}
Assume that $U$ is an irreducible, weight $A(V)$--module. Then $U$ is isomorphic to exactly one of the modules
from the list
$$ U _0, U_{-1}, U_{-1} ^{*}, U_{-1, r}  \quad (r \in {\C} \setminus \Z). $$
\end{lemma}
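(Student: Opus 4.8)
The plan is to reduce the classification of irreducible weight $A(V)$--modules to the classification of irreducible weight $U(sl_2)$--modules on which the extra relation of Proposition \ref{zhu-str}(ii) vanishes, and then to read off which modules from Mathieu's list of irreducible weight $sl_2$--modules survive. First I would recall that by Proposition \ref{zhu-str}(i) the algebra $A(V)$ is a quotient of $U(sl_2)$, so any irreducible $A(V)$--module is in particular an irreducible $U(sl_2)$--module; by Proposition \ref{zhu-str}(ii) it must be annihilated by $[e]([\omega]+\tfrac12)$, where $[\omega]$ acts as $\tfrac{1}{2}\Omega$ via the Casimir (this is exactly the normalization built into the definition of $U_{\mu,r}$, where $\Omega$ acts by $\tfrac{\mu(\mu+2)}{2}$). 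Since $U$ is assumed to be a weight module, $h(0)=[h]$ acts semisimply with finite-dimensional eigenspaces if we are in the $\tfrac12\Zp$--graded weight category, so $U$ is one of the standard irreducible weight $sl_2$--modules: a highest or lowest weight module, a finite-dimensional module, or a dense module $U_{\mu,r}$ of the type written down before the lemma.

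Next I would impose the relation $[e]([\omega]+\tfrac12)=0$ on each family. On the Casimir eigenspace $[\omega]$ is the scalar $\tfrac{\mu(\mu+2)}{4}$ in terms of the $sl_2$--weight $\mu$; so either $e$ acts as $0$ on the whole module (forcing the module to be one-dimensional, hence $U_0$, the trivial module, since $e$ acting by zero on an irreducible $sl_2$--module forces triviality) or $[\omega]=-\tfrac12$, i.e.\ $\tfrac{\mu(\mu+2)}{4}=-\tfrac12$, which gives $\mu(\mu+2)=-2$, i.e.\ $(\mu+1)^2=-1$ --- hmm, that has no real solution, so I must instead use $2[\omega]=\Omega$ as normalized by Lemma \ref{lemma-c} and Proposition \ref{zhu-str}, giving $[\omega]=\tfrac{r(r+2)}{2}$ as in the proof of Proposition \ref{class-zhu-1}, whence $[e]([\omega]+\tfrac12)=0$ forces $r(r+2)+1=0$, i.e.\ $(r+1)^2=0$, i.e.\ $r=-1$, whenever $[e]$ acts injectively. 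So the only Casimir value allowed for a module with $e$ acting nontrivially is the one corresponding to $\Omega=-\tfrac12$, which is precisely $\mu=-1$ in the $U_{\mu,r}$ notation. This singles out: the highest weight module $U_{-1}$ and its lowest weight ``dual'' $U_{-1}^*$ (both with Casimir $-\tfrac12$), and the dense modules $U_{-1,r}$ with $r\in\C\setminus\Z$ (for $r\in\Z$ the module $U_{-1,r}$ is reducible and its composition factors are exactly $U_{-1}$ and $U_{-1}^*$, so nothing new appears). The trivial module $U_0$ is the remaining case, where $e$ acts by zero.

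Finally I would check that these four families are pairwise non-isomorphic and that each genuinely is an irreducible weight $A(V)$--module, not merely a $U(sl_2)$--module: $U_0$ and $U_{-1}$ occur already in Proposition \ref{class-zhu-1} (as the lowest components of $V$ and of $M\otimes F$ respectively); $U_{-1}^*$ is the contragredient, realized on the lowest component of the restricted dual of $M\otimes F$, or more concretely as a lowest-weight $sl_2$--module on which $[e]([\omega]+\tfrac12)$ still vanishes since the relation only involves the Casimir and the two-sided ideal structure; and for $U_{-1,r}$ with $r\notin\Z$ one checks directly from the displayed $sl_2$--action that $e$ acts bijectively and $\Omega$ acts by $-\tfrac12$, so the defining relation of $A(V)$ is satisfied and the module factors through $A(V)$. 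Non-isomorphism is immediate by comparing $h(0)$--spectra: $U_0$ has spectrum $\{0\}$, $U_{-1}$ has spectrum $\{1,3,5,\dots\}$ (bounded above after a shift), $U_{-1}^*$ the negative of that, and $U_{-1,r}$ the coset $-2r-1+2\Z$ which is not a subset of $\Z$. The main obstacle is the bookkeeping in the middle step: one has to be careful about the precise normalization relating $[\omega]$ to the Casimir $\Omega$ (the factor coming from Lemma \ref{lemma-c}) and about verifying that the degenerate integral cases $U_{-1,r}$, $r\in\Z$, contribute no new irreducibles --- i.e.\ that every irreducible weight $sl_2$--module with the correct Casimir value is exhausted by the list, which is where one invokes the classification of irreducible weight modules for $sl_2$ with a one-sided boundedness or density dichotomy.
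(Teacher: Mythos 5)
Your proposal is correct and follows exactly the route the paper intends (the paper in fact states this lemma without proof): reduce to irreducible weight $U(sl_2)$--modules via Proposition \ref{zhu-str}, observe that the relation $[e]([\omega]+\tfrac{1}{2})=0$ forces either $[e]=0$ (whence $h=[e,f]$ and then $f$ also act by zero, giving $U_0$) or the Casimir value $-\tfrac{1}{2}$, i.e.\ $\mu=-1$, and then invoke the classification of irreducible weight $sl_2$--modules together with concrete realizations (lowest components of $M\otimes F$, of its contragredient, and of $M(r)$) to confirm that all four families genuinely descend to $A(V)$ rather than only to $U(sl_2)$. Two cosmetic points: your self-corrected normalization $[\omega]=\Omega$ with $\Omega|_{U_{\mu,r}}=\tfrac{\mu(\mu+2)}{2}$ is the right one (the first attempt with $\tfrac{\mu(\mu+2)}{4}$ should simply be deleted), and in the final non-isomorphism check the $h(0)$--spectrum of $U_{-1}$ is $\{-1,-3,-5,\dots\}$ (bounded above), while $\{1,3,5,\dots\}$ is the spectrum of $U_{-1}^{*}$.
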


Now we want to show that $U_{-1,r}$ are modules for Zhu's algebra $A(V)$ and to construct the associated simple $V$--modules
$L^{N=4}_{c} (U_{-1,r})$.

First we notice that for every $\lambda \in {\C}$
$$V_{L + \lambda (\alpha + \beta) }:= V_L . e ^{\lambda (\alpha + \beta) } $$
is an $\sigma_{\lambda}$--twisted $V_{L}$--module, where $\sigma_{\lambda} := \exp[ \lambda (\alpha + \beta) (0)]$
is $V_L$--automorphism. This twisted module can be constructed as
$$ (V_{L + \lambda (\alpha + \beta) }, \widetilde{Y}(\cdot, z) ):=
(V_{L }, Y_{V_L} (\Delta (\lambda (\alpha + \beta)(-1){\1}), z)
\cdot, z) ).$$  Since $\sigma_{\lambda}
\equiv \mbox{Id}$ on $M \otimes F$ and $V$, we conclude that $V_{L
+ \lambda (\alpha + \beta) }$ is an untwisted $V$--module. Take
$\lambda =-r-1$ and consider $V$--submodule \bea &&  M {(r)}:={\C}
[ \beta + ({\Z} + \lambda)(\alpha + \beta)] \otimes M_{\alpha,
\beta} (1)
 \otimes F \label{opis} \eea
where $M_{\alpha, \beta} (1)$ is the Heisenberg subalgebra of $V_L$ generated by $\alpha(z)$ and $\beta (z)$.

\begin{theorem} \label{ired-relaxed}
Assume that $r \notin {\Z}$. Then $M {(r)}$ is an irreducible
$\tfrac{1}{2} {\Zp}$--graded $V$--module whose lowest component is
$$ M{(r)} (0) := U(sl_2). e ^{\beta - \delta  -(r+1) (\alpha + \beta) } \cong U_{-1,r}. $$
In particular,  $M {(r)} \cong L^{N=4}_{c} (U_{-1,r})$.
\end{theorem}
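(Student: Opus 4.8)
The plan is to verify the three assertions in turn, using the explicit lattice description of $M(r)$ together with the results on twisted modules from Section \ref{prel} and the classification of weight $A(V)$--modules. First I would explain why $M(r)$ carries an (untwisted) $V$--module structure: as stated just before the theorem, $V_{L + \lambda(\alpha+\beta)} = V_L.e^{\lambda(\alpha+\beta)}$ is a $\sigma_\lambda$--twisted $V_L$--module via the $\Delta$--operator construction of Proposition \ref{novi-moduli} (with $h = \lambda(\alpha+\beta)(-1){\bf 1}$), and since $\sigma_\lambda$ acts trivially on the generators of $V \subset M \otimes F$ — because $\langle \alpha+\beta, \alpha \rangle = \langle \alpha+\beta, \beta\rangle = \langle \alpha+\beta, \delta\rangle = 0$ forces the bracket with $\alpha+\beta$ to vanish on all of $M \otimes F$ — the restriction to $V$ is an honest untwisted $V$--module. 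Taking $\lambda = -r-1$ and restricting to the sublattice-plus-Heisenberg piece $M(r) = {\Bbb C}[\beta + ({\Bbb Z}+\lambda)(\alpha+\beta)] \otimes M_{\alpha,\beta}(1) \otimes F$ gives a $V$--submodule because the action of the generators $e,h,f,\tau^\pm,\overline\tau^\pm$ (all lying in $M \otimes F \subset M \otimes M_\delta(1)$) preserves the $\delta$--degree only up to $F$, and preserves the $(\alpha+\beta)$--degree modulo ${\Bbb Z}(\alpha+\beta)$ and the span of $\alpha(n),\beta(n)$ — this is the standard Wakimoto/Fock-space bookkeeping.

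Next I would identify the lowest conformal-weight component. Using $\omega$ from (\ref{def-virasoro-2}) one computes $L(0)$ on $e^{\beta - \delta - (r+1)(\alpha+\beta)}$; since $r \notin {\Bbb Z}$ the weight is non-integral modulo $\tfrac12$, and a direct check shows $M(r)(0) = U(sl_2).e^{\beta - \delta - (r+1)(\alpha+\beta)}$ with the $sl_2 = \langle e(0),h(0),f(0)\rangle$--action matching the formulas defining $U_{\mu,r}$ with $\mu = -1$: applying $e(0) = a_0$ shifts the $(\alpha+\beta)$--exponent by $\alpha+\beta$ giving $E_i \mapsto E_{i-1}$, $h(0)$ acts by the stated eigenvalue $-2r-2i-1$ on the $i$-th vector, and $f(0)$ produces the quadratic factor $-(r+i+1)(r+i+1)$ (here $\mu=-1$ so $r+i-\mu = r+i+1$), exactly the relations in $U_{-1,r}$. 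That $M(r)(0) \cong U_{-1,r}$ as $A(V)$--modules then follows because $[\tau^\pm] = [\overline\tau^\pm] = 0$ in $A(V)$ (Proposition \ref{zhu-str}), so the $A(V)$--structure is entirely captured by the $U(sl_2)$--action, and the Casimir $\Omega$ acts as $\tfrac{(-1)(1)}{2} = -\tfrac12$, consistent with $[e]([\omega]+\tfrac12) = 0$.

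The main work, and the step I expect to be the real obstacle, is irreducibility of $M(r)$ as a $\tfrac12{\Bbb Z}_{\ge 0}$--graded $V$--module. The strategy is: (a) show $M(r)$ is generated by its lowest component $M(r)(0)$, and (b) show $M(r)(0) \cong U_{-1,r}$ is irreducible over $A(V)$, whence $M(r) \cong L^{N=4}_c(U_{-1,r})$ by the general theory. For (b), irreducibility of $U_{-1,r}$ as an $sl_2$--module when $r \notin {\Bbb Z}$ is elementary — the structure constants $-(r+i+1)(r+i+1)$ never vanish for $i \in {\Bbb Z}$ — and then irreducibility over $A(V)$ follows since $A(V)$ acts through its $U(sl_2)$--quotient. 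For (a), the cleanest route is to use the Weyl-algebra/fermion picture: $M(r)$ is, as a module for $M \otimes F$ and hence for $V \subset M \otimes F$, built from a Weyl-module $M$--piece tensored with $F$ and a one-parameter family of Fock spaces; one shows that the screening-free part of $M \otimes F = SC\Lambda(1)$ acts on the relaxed Wakimoto module $M(r)$ transitively from degree $0$, by the same argument used in Corollary \ref{characterization} and Theorem \ref{simplicity} — namely, any nonzero graded submodule has a lowest component which is an $A(V)$--submodule of the irreducible $U_{-1,r}$, forcing it to contain $M(r)(0)$, and then generation gives the whole module. Alternatively one invokes the known irreducibility of generic relaxed Wakimoto modules for $\widehat{sl_2}$ at level $-3/2$ (the $M \otimes M_\delta(1)$--structure) and notes that the $F$--factor only adds the fermionic completion without breaking irreducibility. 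The delicate point to get right is precisely the generation statement (a): that no proper graded $V$--submodule of $M(r)$ can meet the top degree, which one reduces to the non-vanishing of the structure constants above and the simplicity of $V$ established in Theorem \ref{simplicity}.
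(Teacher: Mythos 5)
Your setup and your identification of the lowest component essentially match the paper: $\sigma_\lambda$ does act trivially on $M\otimes F$ (though not for the reason you give --- $\langle \alpha+\beta,\alpha\rangle =1$ and $\langle\alpha+\beta,\beta\rangle=-1$, not $0$; what actually matters is that $\alpha+\beta$ is isotropic and orthogonal to $\delta$, while $M\otimes F$ only involves exponentials $e^{\gamma}$ with $\gamma\in\Z(\alpha+\beta)+\Z\delta$), and your computation of the $sl_2$--action on the $E_i$, matching the defining relations of $U_{-1,r}$, is correct. Note also that $L(0)E_i=-\tfrac12$ for every $i$ and every $r$; the remark that the conformal weight is ``non-integral modulo $\tfrac12$'' when $r\notin\Z$ is neither correct nor needed.

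The genuine gap is in the irreducibility step. You reduce irreducibility to (a) generation by $M(r)(0)$ and (b) irreducibility of $M(r)(0)\cong U_{-1,r}$ over $A(V)$, asserting that any nonzero graded submodule has a lowest component which is an $A(V)$--submodule of $U_{-1,r}$. That is only true for submodules meeting degree $0$. The dangerous case --- and the actual content of the paper's proof --- is a nonzero graded submodule (or, equivalently, a singular subspace) whose lowest component sits in strictly positive degree; (a) and (b) do not exclude this, because a module generated by an irreducible lowest component need not be irreducible. The paper closes this case by combining the relation $[e]([\omega]+\tfrac12)=0$ in $A(V)$ with the classification of weight $A(V)$--modules: the lowest component of such a submodule would have conformal weight $\ge 0$, hence would have to contain a vector $w$ with $L(0)w=0$ and $ew=fw=hw=0$. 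One then checks explicitly that the conformal-weight-$0$ subspace of $M(r)$ is spanned by the vectors $e^{\beta-(r+1+i)(\alpha+\beta)}$ and $e^{\beta-(r+1+i)(\alpha+\beta)-2\delta}$, whose $h(0)$--eigenvalues are $-2(r+i)$ and $-2(r+i+1)$, all nonzero when $r\notin\Z$; so no such $w$ exists. This exclusion of positive-degree singular vectors is missing from your argument. Your fallback route --- citing irreducibility of generic relaxed Wakimoto modules for $\widehat{sl_2}$ and saying the fermionic factor ``only adds the completion'' --- does not repair it: $M(r)$ is very far from irreducible over $L_{A_1}(-\tfrac32\Lambda_0)$ alone (it is an infinite direct sum of Wakimoto-type modules indexed by the $\delta$--charge), and it is precisely the odd generators $\tau^{\pm},\overline{\tau}^{\pm}$ that glue these summands together, so irreducibility has to be argued over $V$ itself, as above.
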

\begin{proof}
It is clear that the lowest component of $ M{(r)} $ is
$$ M{(r)} (0) = \mbox{span}_{\Bbb C} \{ E_i = e^{\beta - \delta - (r+1+ i) (\alpha+ \beta) }, \ i \in \Z \}, $$
and that $ L(0) E_i = -\frac{1}{2} E_i $, $i \in {\Z}$. By using explicit formula for $e,  h, f$ (\ref{def-e})-(\ref{def-f})
we see that $M{(r)} (0) \cong U_{-1,r}$ as $U(sl_2)$--modules and as $A(V)$--modules. Assume now that $M {(r)}$ is
not irreducible. Then it must contain a  graded submodule which is not generated by vectors from $M {(r)} (0)$.
By using information about Zhu's algebra $A(V)$, we conclude that then $M {(r)}$ should contain vector $w$ such that
$L(0) w= 0$, $e w = f w = h w =0$. But vectors of conformal weight $0$ are in the linear span  of
$$ \{ e^{\beta   - (r+1+ i) (\alpha+ \beta) },  e^{\beta   - (r+1+ i) (\alpha+ \beta) - 2 \delta  } \  \vert \  i \in \Z \}, $$
which can not generate a $V$--submodule with lowest conformal weight $0$. The proof follows.
\end{proof}

For any $V$--module $M$
such that $L(0)$ and $h(0)$ act semi-simply with finite-dimensional (common) eigenspaces we define
$\mbox{ch}_M (q,z) = \mbox{tr} q^{L(0)} z ^{h(0)}. $ By using (\ref{opis}) and properties of $\delta$--function one show the
following result.

\begin{proposition} We have
$$\mbox{ch}_{M(r)} (q,z) = z^{-2 r } \delta(z ^{2})  \prod_{n=1} ^{\infty} (1-q ^{n}) ^{-2}
\prod_{n=1} ^{\infty} ( 1+ q ^{n-3/2} z^{-1}) (1 + q^{n+1/2} z ).$$
\end{proposition}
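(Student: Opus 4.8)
The character is a trace over $M(r)$, so I would compute it by using the explicit tensor-product description of $M(r)$ in equation (\ref{opis}) and the fact that $L(0)$ and $h(0)$ act diagonally on the PBW-type basis coming from $V_L$. The idea is to split the trace $\mathrm{tr}\, q^{L(0)}z^{h(0)}$ into three commuting tensor factors: the lattice/Heisenberg contribution from ${\C}[\beta + ({\Z}+\lambda)(\alpha+\beta)]\otimes M_{\alpha,\beta}(1)$, and the fermionic contribution from $F$; the Heisenberg factor $M_\delta(1)$ hides inside $F=F_1=M_\delta(1)\otimes{\C}[{\Z}\delta]$. Because $\delta(z)$ contributes to $L(0)$ through $\omega$ (see (\ref{def-virasoro})) but $\delta(0)$ also enters $h(0)=-2:a^*a:+\delta(0)$, I need to keep careful track of how $\delta$-modes are distributed between the Weyl factor $M$ and the Clifford factor, but the whole thing factorizes.

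First I would recall that on the lowest component $M(r)(0)=\mathrm{span}\{E_i = e^{\beta-\delta-(r+1+i)(\alpha+\beta)}\}$ we have $L(0)E_i = -\tfrac12 E_i$ and $h(0)E_i = (-2r-2i-1)E_i$ (this is $\mu=-1$ in the $U_{\mu,r}$ formulas), so the generating-function contribution of the lattice directions $\alpha+\beta$ is $\sum_{i\in\Z} z^{-2r-2i-1}q^{-1/2}$; summing the geometric-type series in $z^{\pm 2}$ produces $q^{-1/2}z^{-2r-1}\delta(z^2)$ up to bookkeeping, which is where the factor $z^{-2r}\delta(z^2)$ comes from. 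Next, the free bosonic oscillators: $M(r)$ is built by acting with the two independent Heisenberg currents that are neutral for $h(0)$ — effectively $\alpha(z)$ together with one more bosonic direction — giving the standard $\prod_{n\ge1}(1-q^n)^{-2}$, since each oscillator mode $x(-n)$ raises $L(0)$ by $n$ and is $h(0)$-neutral on these states. Finally the fermions $\Psi=e^\delta$, $\Psi^*=e^{-\delta}$ have conformal weights (from $\omega$) shifted by the $\delta$-lattice so that their modes sit in $\tfrac12+\Z$; acting with the mode of $\Psi^*$ of $L(0)$-degree $n-\tfrac12$ contributes $q^{\,n-1/2}$ and changes $h(0)$ by $-1$ (via $\delta(0)$), while the mode of $\Psi$ of $L(0)$-degree $n+\tfrac12$ contributes $q^{\,n+1/2}$ and changes $h(0)$ by $+1$; hence the fermionic generating function is $\prod_{n\ge1}(1+q^{n-3/2}z^{-1})(1+q^{n+1/2}z)$. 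Assembling the three factors and including the overall $q^{-1/2}$ from the lowest weight, which I would absorb by re-indexing the fermionic product (note $q^{-1/2}\cdot q^{\,n+1/2}=q^n$ and the shift $n-3/2$ is exactly $-1/2$ plus the half-integer fermion degree), yields the stated formula.

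The main obstacle I expect is the careful bookkeeping of conformal weights and $h(0)$-charges for the twisted fermions and the $\delta$-lattice, i.e.\ making precise which modes of $\Psi,\Psi^*$ act freely on $M(r)$ and with what $(L(0),h(0))$-bidegree; this is exactly the point where the half-integer exponents $q^{n-3/2}$ and $q^{n+1/2}$ (rather than a symmetric $q^{n\mp1/2}$) arise, and getting the asymmetry right requires using the explicit form of $\omega$ in (\ref{def-virasoro}) with its $-2\delta(-2)$ term together with $h(0)=\delta(0)-2:a^*a:$. Once the bidegrees of all the generators $x(-n)$, $\Psi^*(-n+\tfrac12)$, $\Psi(-n-\tfrac12)$ acting on the lowest piece $M(r)(0)$ are identified, the computation is the routine one of a product of geometric series and fermionic/bosonic partition functions, together with the identity $\sum_{i\in\Z} z^{2i} = \delta(z^2)$ in the sense of formal $\delta$-functions, and the claimed equality follows.
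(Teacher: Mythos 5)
Your computation is correct and is precisely the argument the paper intends: the paper gives no written proof beyond the remark that the formula follows from the decomposition (\ref{opis}) and properties of the formal $\delta$--function, and your factorization of the trace into the $\alpha,\beta$--lattice/Heisenberg part (yielding $z^{-2r}\delta(z^2)\prod_{n\ge1}(1-q^n)^{-2}$, since $L(0)$ vanishes on the vectors $e^{\beta+(j+\lambda)(\alpha+\beta)}$) and the fermionic Fock space $F$ with the grading shifted by the $-2\delta(-2)$ term of $\omega$ (yielding the asymmetric product) is exactly that computation. The one internal slip is in your fermionic bidegrees: you say the $\Psi^*$--mode of degree $n-\tfrac12$ contributes $q^{n-1/2}$ but then (correctly) write $q^{n-3/2}$ in the product; since $\Psi^*=e^{-\delta}$ has conformal weight $-\tfrac12$ for this $\omega$, its creation modes raise $L(0)$ by $n-\tfrac32$, $n\ge1$, so the $n=1$ factor $(1+q^{-1/2}z^{-1})$ already accounts for the state $e^{-\delta}$ of weight $-\tfrac12$ and charge $-1$ — this is exactly the term $q^{-1/2}z^{-1}$ you peel off the lowest component $E_i$, so your reabsorption step does close up and the stated formula follows.
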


\section{Logarithmic $V$--modules}

Now we shall apply the general method for constructing logarithmic modules developed in \cite{AdM-2009} to construct a series of logarithmic modules for the vertex superalgebra $V$.

First we notice that
\bea && \Pi(0) = {\Bbb C}[\Z (\alpha+\beta)] \otimes M_{\alpha, \beta} (1) \label{def-pi0} \eea
is a vertex subalgebra of $V_L$ and that for every $\lambda \in {\C}$
$$\Pi (\lambda) ={\Bbb C}[(\Z + \lambda )(\alpha+\beta)] \otimes M_{\alpha, \beta} (1) = \Pi(0). e^{\lambda (\alpha + \beta)}  $$
is its irreducible module.

\begin{remark}
This vertex algebra also appeared  in \cite{A-2007}, \cite{BDT} and \cite{efren}.
\end{remark}

Now define the following vertex superalgebra
$$ S\Pi (0) = \Pi (0) \otimes F \subset V_L$$
and its irreducible modules
$$ S\Pi (\lambda) = S\Pi (\lambda) \otimes F \subset V_{ L + \lambda(\alpha + \beta)}. $$

We note also
$$ V \subset M \otimes F \subset S\Pi(0). $$

Consider now the extended vertex superalgebra (in the sense of \cite{AdM-2009})
$$ \mathcal{SV} := S\Pi (0) \oplus S\Pi(-1/2)  $$
and its modules
$$ \mathcal{SV} (\lambda)  := S\Pi (\lambda)  \oplus S\Pi(\lambda-1/2)  .  $$
Let $$v = e ^{-\frac{1}{2} (\alpha + \beta ) + \delta} \in S\Pi(-1/2). $$
and recall that $\widetilde{Q} = \mbox{Res}_z Y(v,z)$ is a screening operator
which commutes with the action of $U(\mathcal A)$.

By using results of \cite{AdM-2009} we get:
\begin{theorem} For every $\lambda \in {\C}$
\bea && (\widetilde{\mathcal{SV}} (\lambda)  , \widetilde{Y}_{\widetilde{\mathcal{SV}} (\lambda) }(\cdot, z))
:=({\mathcal{SV}} (\lambda), Y_{{\mathcal{SV}} (\lambda)  }  (\Delta (v,z) \cdot,z )) \label{deformed} \eea
is a logarithmic $V$--module. The action of the Virasoro algebra is
$$ \widetilde{L(z)} = \sum_{n \in \Z} \widetilde{L(n)} z ^{-n-2} =
\widetilde{Y}_{\widetilde{\mathcal{SV}} (\lambda)}(\omega, z) = L(z) + z^{-1}
 Y_{{\mathcal{SV}} (\lambda)}(v, z). $$
 In particular,
 $$\widetilde{L(0)} = L(0) + \widetilde{Q}$$
 and $\widetilde{\mathcal{SV}} (\lambda)$ has $\widetilde{L(0)}$--nilpotent rank two.
\end{theorem}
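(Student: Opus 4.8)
The plan is to invoke the general deformation machinery of \cite{AdM-2009} applied to the screening operator $\widetilde{Q}$. First I would observe that $\widetilde{\mathcal{SV}}(\lambda)$ is a well-defined weak module: by Proposition \ref{novi-moduli} the pair $({\mathcal{SV}}(\lambda), Y_{{\mathcal{SV}}(\lambda)}(\Delta(v,z)\cdot,z))$ is a $g_v$-twisted $\mathcal{SV}$-module, where $g_v = e^{2\pi i v(0)}$; but since $\la v, v\ra = -\tfrac14\la \alpha+\beta,\alpha+\beta\ra + \la\delta,\delta\ra + \text{(cross terms)}$ works out so that $v_0 = \widetilde{Q}$ acts integrally on $V$ and commutes with $U(\mathcal A)$, the twisting is trivial on the subalgebra $V$. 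Hence the deformed module restricts to an \emph{untwisted} $V$-module. This is exactly the setup of \cite{AdM-2009}: one deforms by $\Delta(v,z)$ along a screening-type vector $v$ whose zero mode is locally nilpotent on the relevant modules.

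Next I would establish the formula for the deformed Virasoro field. The key input is that $v$ has conformal weight $1$ (which follows from $\la v,v\ra = \la -\tfrac12(\alpha+\beta),-\tfrac12(\alpha+\beta)\ra + \la\delta,\delta\ra + 2\la -\tfrac12(\alpha+\beta),\delta\ra$; with $\la\alpha+\beta,\alpha+\beta\ra = 0$, $\la\delta,\delta\ra = 1$, and $\la\alpha+\beta,\delta\ra = 0$, one gets $L(0)v = v$), and that $v$ is a \emph{primary} vector for $L$, i.e.\ $L(n)v = \delta_{n,0}v$ for $n \ge 0$. Under these hypotheses the general identity $Y(\Delta(v,z)\omega, z) = L(z) + z^{-1} Y(v,z) + (\text{terms in } z^{-2} L(1)\omega, \text{ etc.})$ from \cite{Li5}, \cite{AdM-2009} collapses — because $\omega$ is Virasoro so $L(1)\omega$, $L(2)\omega$ contribute only the known constants, and those constant contributions vanish on $\widetilde{\mathcal{SV}}(\lambda)$ after regularization — to exactly $\widetilde{L(z)} = L(z) + z^{-1}Y_{{\mathcal{SV}}(\lambda)}(v,z)$. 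Extracting the $z^{-2}$-coefficient gives $\widetilde{L(0)} = L(0) + v_0 = L(0) + \widetilde{Q}$.

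Finally I would verify the logarithmic (non-semisimple) property: $\widetilde{L(0)}$ acts with nilpotent part of rank exactly two. Since $\widetilde{Q}$ commutes with $L(0)$ and with all of $U(\mathcal A)$, the nilpotent part of $\widetilde{L(0)}$ on $\widetilde{\mathcal{SV}}(\lambda)$ is precisely the operator $\widetilde{Q}$. So the claim reduces to: $\widetilde{Q}^2 = 0$ on $\widetilde{\mathcal{SV}}(\lambda)$ but $\widetilde{Q} \ne 0$. The vanishing $\widetilde{Q}^2 = 0$ is immediate from the explicit lattice form of $v = e^{-\frac12(\alpha+\beta)+\delta}$ — the relevant operator product $Y(v,z)v$ has no singular term forcing $\widetilde{Q}^2$ to be, up to scalar, $\mathrm{Res}$ of a total derivative, hence zero — while $\widetilde{Q} \ne 0$ follows because $\widetilde{\mathcal{SV}}(\lambda) = S\Pi(\lambda) \oplus S\Pi(\lambda - \tfrac12)$ has two summands and $\widetilde{Q}$ maps the second nontrivially into the first (it is the screening intertwining these Feigin--Fuchs-type pieces). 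I expect the main obstacle to be the careful bookkeeping in the second step: ensuring that the auxiliary terms in Li's deformation formula (those coming from $L(1)v$, $L(2)\omega$, and the central-charge correction) genuinely cancel on the deformed module rather than contributing extra operators, which requires checking that $v$ is primary of weight exactly $1$ and that the relevant constants vanish after the $\Delta$-regularization; once that is pinned down, everything else is the routine lattice computation already used throughout the paper.
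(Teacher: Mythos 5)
Your proposal is correct and follows essentially the same route as the paper, whose proof is simply an appeal to the logarithmic-deformation theorem of \cite{AdM-2009}: one checks that $v=e^{-\frac{1}{2}(\alpha+\beta)+\delta}$ is a primary vector of conformal weight one with $v_nv=0$ for $n\ge 0$, whose zero mode $\widetilde{Q}$ is a square-zero screening commuting with $U(\mathcal A)$, and then $\Delta(v,z)\omega=\omega+z^{-1}v$ gives the stated $\widetilde{L(0)}=L(0)+\widetilde{Q}$ and rank-two nilpotency. Two small repairs to your verification: Proposition \ref{novi-moduli} does not literally apply here, since its $h$ must satisfy $h(n)h=\delta_{n,1}\gamma{\1}$ with $h(0)$ acting semisimply, whereas $v_0=\widetilde{Q}$ is nilpotent --- the correct citation is exactly the theorem of \cite{AdM-2009}, which is designed for this nilpotent situation; and ${\rm wt}(v)=1$ does not follow from $\langle v,v\rangle=1$ alone (that would give $\tfrac12$) but requires the linear terms $-\tfrac12\alpha(-2)+\tfrac12\beta(-2)-\delta(-2)$ in the shifted Virasoro vector (\ref{def-virasoro}). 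Once weight one and primarity are established, skew-symmetry gives $v_1\omega=v$ and $v_n\omega=0$ for $n\ge 2$ identically, so the ``auxiliary terms'' you worry about in the second step are absent outright and no regularization argument is needed.
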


Let us describe these logarithmic modules in more detail.

We have:
\bea &&  \Delta(v, z) e =  e, \nonumber \\
 &&
\Delta(v ,z) h = h, \nonumber  \\
&& \Delta(v, z) f =  f + \frac{1}{2} z^{-1} e^{-\frac{3}{2}(\alpha + \beta ) + \delta}, \nonumber \\
&& \Delta(v,z) \tau ^{\pm} = \tau ^{\pm}, \nonumber \\
&& \Delta(v,z) \overline{\tau}^{+} = \overline{\tau} ^{+} +  2 z^{-1} e^{\frac{1}{2}(\alpha + \beta ) }, \nonumber \\
&& \Delta(v,z) \overline{\tau} ^{-} = \overline{\tau} ^{-} - 2 z^{-1} D (e ^{-\frac{1}{2} (\alpha + \beta) })+
z ^{-2} e^{-\frac{1}{2} (\alpha + \beta)}.
\nonumber \eea
The formulas above and (\ref{deformed}) completely determine the   action of the $N=4$ superconformal algebra on logarithmic modules.

The following corollary shows that logarithmic $V$--modules appear in the extension of non-logarithmic (weight) $V$--modules:
\begin{corollary}
 The logarithmic $V$--module $\widetilde{\mathcal{SV}} (\lambda) $  appears in the extension
$$ 0 \rightarrow  S \Pi (\lambda -1/2) \rightarrow   \widetilde{\mathcal{SV}} (\lambda)  \rightarrow    S \Pi (\lambda )  \rightarrow 0 . $$
of non-logarithmic weak  $V$--modules $S \Pi (\lambda -1/2)$ and  $S \Pi (\lambda)$.
\end{corollary}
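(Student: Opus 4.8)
The statement to be established is the short exact sequence
\[
0 \rightarrow S\Pi(\lambda-1/2) \rightarrow \widetilde{\mathcal{SV}}(\lambda) \rightarrow S\Pi(\lambda) \rightarrow 0
\]
of (weak) $V$--modules. The plan is to read off this sequence directly from the definitions, using the fact (already established in the preceding Theorem) that $\widetilde{\mathcal{SV}}(\lambda)$ is a logarithmic $V$--module obtained from $\mathcal{SV}(\lambda) = S\Pi(\lambda) \oplus S\Pi(\lambda-1/2)$ by applying the $\Delta(v,z)$--deformation of Proposition \ref{novi-moduli} with $h$ replaced by $v = e^{-\frac12(\alpha+\beta)+\delta}$.

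First I would observe that, as a vector space, $\widetilde{\mathcal{SV}}(\lambda) = S\Pi(\lambda) \oplus S\Pi(\lambda-1/2)$, and the first step is to identify $S\Pi(\lambda-1/2)$ as a $V$--submodule. The point is that the deformed vertex operator $\widetilde{Y}_{\widetilde{\mathcal{SV}}(\lambda)}(a,z) = Y_{\mathcal{SV}(\lambda)}(\Delta(v,z)a,z)$ for $a \in V$ carries $S\Pi(\lambda-1/2)$ into $S\Pi(\lambda-1/2)[[z,z^{-1}]]$: indeed $\Delta(v,z)a$ is a (Laurent polynomial in $z$) combination of vectors in $S\Pi(0) \oplus S\Pi(-1/2)$ — this is exactly what the explicit list of formulas $\Delta(v,z)e, \Delta(v,z)h, \Delta(v,z)f, \Delta(v,z)\tau^{\pm}, \Delta(v,z)\overline{\tau}^{\pm}$ displayed just above the Corollary shows — and since $V \subset S\Pi(0)$, while $S\Pi(\lambda_1)\cdot S\Pi(\lambda_2) \subset S\Pi(\lambda_1+\lambda_2)$ under the generalized vertex algebra product on $V_L$, the mode action of $S\Pi(0) \oplus S\Pi(-1/2)$ on $S\Pi(\lambda-1/2)$ lands in $S\Pi(\lambda-1/2) \oplus S\Pi(\lambda-1)$; but the latter summand does not occur in $\mathcal{SV}(\lambda)$, so in fact the action of $V$ preserves $S\Pi(\lambda-1/2)$. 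Moreover on this submodule the deformation is trivial in the sense that $\Delta(v,z)$ acts as the identity on the generators of $V$ modulo terms that shift into $S\Pi(\lambda-1)$ (which is absent), so $S\Pi(\lambda-1/2)$ sits inside $\widetilde{\mathcal{SV}}(\lambda)$ as the honest (non-logarithmic) $V$--module $S\Pi(\lambda-1/2) \subset V_{L+(\lambda-1/2)(\alpha+\beta)}$ described in Section \ref{weight}.

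Next I would form the quotient $\widetilde{\mathcal{SV}}(\lambda)/S\Pi(\lambda-1/2)$, which as a vector space is $S\Pi(\lambda)$. On this quotient the induced vertex operator is $a \mapsto Y_{\mathcal{SV}(\lambda)}(\Delta(v,z)a,z)$ followed by projection onto $S\Pi(\lambda)$; since the ``correction'' part of $\Delta(v,z)a - a$ always lies in $S\Pi(-1/2)$, it sends $S\Pi(\lambda)$ into $S\Pi(\lambda-1/2)$, which is killed in the quotient. Hence on the quotient the deformed action reduces to the undeformed action $Y_{S\Pi(\lambda)}(a,z)$, i.e. the quotient is $S\Pi(\lambda)$ as a $V$--module. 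This gives the short exact sequence, and since the connecting map is nonzero precisely because $\Delta(v,z)\overline{\tau}^{+}$ and $\Delta(v,z)f$ have genuinely nonzero $z^{-1}$-correction terms (as displayed), the extension is nonsplit — this is why $\widetilde{L(0)} = L(0)+\widetilde{Q}$ has nilpotent rank two, consistent with the Theorem. I would then note that the non-logarithmic weak $V$--module structures on $S\Pi(\lambda)$ and $S\Pi(\lambda-1/2)$ are the ones coming from $V \subset S\Pi(0)$ and the embeddings $S\Pi(\mu) \subset V_{L+\mu(\alpha+\beta)}$, exactly as used in Theorem \ref{ired-relaxed}. The main obstacle is purely bookkeeping: one must check carefully that every correction term in $\Delta(v,z)a$ (for $a$ ranging over the strong generators $e,h,f,\tau^{\pm},\overline{\tau}^{\pm},\omega$ of $V$) lies in $S\Pi(-1/2)$ and never in $S\Pi(+1/2)$ or beyond, so that the filtration $S\Pi(\lambda-1/2) \subset \widetilde{\mathcal{SV}}(\lambda)$ is genuinely $V$--stable; this is guaranteed by the grading \eqref{grad-v} together with the fact that $v$ has $\delta(0)$--degree $+1$ and $(\alpha+\beta)(0)$--degree $-1/2$, but it should be spelled out.
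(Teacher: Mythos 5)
Your overall strategy (read the extension off from the explicit formulas for $\Delta(v,z)a$ and the grading of $\mathcal{SV}(\lambda)$) is the natural one, but the key step is justified by a false statement. You argue that the correction modes send $S\Pi(\lambda-1/2)$ into ``$S\Pi(\lambda-1)$, which does not occur in $\mathcal{SV}(\lambda)$,'' and conclude that the $V$--action preserves $S\Pi(\lambda-1/2)$. But $\Pi(\mu)={\C}[({\Z}+\mu)(\alpha+\beta)]\otimes M_{\alpha,\beta}(1)$ depends only on $\mu$ modulo ${\Z}$, so $S\Pi(\lambda-1)=S\Pi(\lambda)$ is literally the other summand of $\mathcal{SV}(\lambda)$ — it very much does occur. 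Concretely, the correction to $f$ is $\tfrac12 z^{-1}Y(e^{-\frac32(\alpha+\beta)+\delta},z)$, and since $\langle\alpha+\beta,\alpha+\beta\rangle=0$ one computes $(e^{-\frac32(\alpha+\beta)+\delta})_{-1}\,e^{(\lambda-\frac12)(\alpha+\beta)}=\pm\, e^{(\lambda-2)(\alpha+\beta)+\delta}\neq 0$, which lies in $\Pi(\lambda-2)\otimes F=S\Pi(\lambda)$. So the deformed action is not upper triangular with respect to the naive decomposition $S\Pi(\lambda)\oplus S\Pi(\lambda-1/2)$, and the invariance of the summand $S\Pi(\lambda-1/2)$ — which is the entire content of the corollary — is not established by your argument. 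The same confusion appears at the end, where you propose to check that no correction lies in ``$S\Pi(+1/2)$ or beyond'': since $S\Pi(+1/2)=S\Pi(-1/2)$, this check is vacuous, and indeed one of the displayed corrections is $2z^{-1}e^{\frac12(\alpha+\beta)}$, which by your own bookkeeping would sit in ``$S\Pi(+1/2)$.''

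What is missing is the actual mechanism from \cite{AdM-2009} that produces the filtration. The relevant structure is not the ${\Z}$--indexed family of spaces $S\Pi(\mu)$, $\mu\in{\C}$ (these are only ${\Z}_2$--graded pieces here, since $S\Pi(-1/2)\cdot S\Pi(-1/2)\subset S\Pi(-1)=S\Pi(0)$), but the square--zero screening $\widetilde{Q}=v_0$: one has $v_0 v=0$, hence $\widetilde{Q}^2=0$ on $\mathcal{SV}(\lambda)$, and $[\widetilde{Q},\widetilde{Y}(a,z)]=\widetilde{Y}(v_0a,z)=0$ for $a\in V=\mathrm{Ker}\,\widetilde{Q}$, so that $\widetilde{Q}$--invariant subspaces furnish the $V$--submodule realizing $S\Pi(\lambda-1/2)$ inside $\widetilde{\mathcal{SV}}(\lambda)$; the identification of the sub and quotient with the undeformed modules then has to be made through this operator rather than through the direct--sum decomposition. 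As written, your proof would equally ``show'' that $S\Pi(\lambda)$ is a submodule, which is incompatible with the extension being nonsplit, so the gap is genuine and not merely presentational.
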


\section{ Twisted $V$--modules }

\label{twisted}

Let $ \mu \in {\R}$, $0 \le \mu  < 1$ and consider now the automorphism $g_{\mu} =e ^{ 2 \pi i \mu \delta (0) } $ of $M \otimes F$. One can easily see that $V$ is  $g_{\mu}$-invariant and that $g_{\mu}$--acts semisimply on $V$.

We have the following result on  the structure of  twisted Zhu's algebra $A_{g_{\mu} } (V)$.

\begin{proposition} \label{zhu-str-twisted} Assume that $\mu  + {\Z} \ne \tfrac{1}{2} + {\Z}. $
\item[(i)] Zhu's algebra $A_{g_{\mu} } (V)$ is isomorphic to a certain
quotient of $U(sl_2)$.

\item[(ii)] In Zhu's algebra $A_{g_{\mu} } (V)$ we have the following
relation:
$$ [e] ( [\omega] + \tfrac{  (1+ \mu) (1 - \mu ) }{2}) = 0.$$
\end{proposition}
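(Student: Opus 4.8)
The plan is to compute the twisted Zhu algebra $A_{g_\mu}(V)$ by the same strategy used in Proposition \ref{zhu-str} for the untwisted case, tracking the $\mu$-dependent modifications coming from the twisted multiplication $*_{g_\mu}$ and the twisted ideal $O_{g_\mu}(V)$. First I would record the $g_\mu$-grading of the strong generators: since $g_\mu = e^{2\pi i \mu\delta(0)}$, the fields $e,h,f$ (which lie in $M\otimes M_\delta(1)$ and have $\delta(0)$-eigenvalue $0$) sit in $V^{\bar 0}$, while $\tau^{\pm},\overline\tau^{\pm}$ have $\delta(0)$-eigenvalue $\pm 1$, hence lie in $V^{\bar\mu}$ and $V^{\overline{-\mu}}$ respectively. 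Under the hypothesis $\mu+\Z\ne\tfrac12+\Z$ these sectors are nontrivial with $\bar\mu\ne\bar 0$, so by the definition of $*_{g_\mu}$ (which kills products involving an element outside $V^{\bar 0}$) and the $\circ_{g_\mu}$ relation applied to $\tau^\pm$, $\overline\tau^\pm$, one gets $[\tau^\pm]=[\overline\tau^\pm]=0$ in $A_{g_\mu}(V)$, exactly as before. Consequently $A_{g_\mu}(V)$ is generated by $[e],[h],[f]$, and since $*_{g_\mu}$ restricted to $V^{\bar 0}$ agrees with the ordinary Zhu product, these still satisfy the $sl_2$ relations. This gives (i): $A_{g_\mu}(V)$ is a quotient of $U(sl_2)$.

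For (ii), the key is to redo the computation of $[G^+(-\tfrac32)\overline G^+(-\tfrac32)\mathbf 1]$ in the twisted Zhu algebra. Here the twisting enters: $\tau^+=e^\delta\in V^{\bar\mu}$, so the mode expansion of $G^+(z)=Y(\tau^+,z)$ runs over $n\in\bar\mu$, and in the twisted setting the relevant ``$O_{g_\mu}$'' relations relate $\tau^+_{-3/2}\,\overline\tau^+$ to $\tau^+$-modes shifted by $\mu$ (the exponents $\wt(a)-1+\delta_{\bar\alpha}+\alpha$ and $x^{1+\delta_{\bar\alpha}}$ in $\circ_{g_\mu}$ produce precisely a shift by $\mu$, with $\delta_{\bar\alpha}=0$ since $\bar\mu\ne\bar 0$). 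So I would expand $\mathrm{Res}_x\,Y(\tau^+,x)\frac{(1+x)^{\wt(\tau^+)-1+\mu}}{x}\overline\tau^+$, use $\wt(\tau^+)=\tfrac32$ together with Lemma \ref{lemma-c}'s computation $G^+(-\tfrac32)\overline G^+(-\tfrac32)\mathbf 1=-2e(-1)\omega+h(-1)e(-2)-h(-2)e(-1)$ and the identities $\tau^+_0\overline\tau^+=e(-2)\mathbf 1$, $\tau^+_1\overline\tau^+=2e(-1)\mathbf 1$, $\tau^+_n\overline\tau^+=0$ ($n\ge 2$), to get a relation in $A_{g_\mu}(V)$ of the form $[-2e\omega]+(\text{coefficient in }\mu)[e]=0$. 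Matching against $[-2e\omega]$ from Lemma \ref{lemma-c} (noting $h(-1)e(-2)$ and $h(-2)e(-1)$ both reduce to multiples of $[e]$, with the multiple now $\mu$-dependent through the binomial coefficients $\binom{\tfrac12+\mu}{k}$), the constant should assemble into $(1+\mu)(1-\mu)$, giving $[e]\big([\omega]+\tfrac{(1+\mu)(1-\mu)}{2}\big)=0$. As a sanity check, $\mu=0$ recovers $[e]([\omega]+\tfrac12)=0$ of Proposition \ref{zhu-str}.

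I would organize the write-up as: (1) identify the $g_\mu$-sectors of the generators and deduce $[\tau^\pm]=[\overline\tau^\pm]=0$, hence (i); (2) compute the twisted analogue of relations \eqref{rel-z1}–\eqref{rel-z2}, i.e. expand $[G^+(-\tfrac32)\overline G^+(-\tfrac32)\mathbf 1]$ using $*_{g_\mu}$ and $\circ_{g_\mu}$ with the $\mu$-shift, extract the contributions of $e(-1)\omega$, $h(-1)e(-2)$, $h(-2)e(-1)$ modulo $O_{g_\mu}(V)$; (3) collect the $\mu$-dependent scalars and verify they combine to $(1+\mu)(1-\mu)$. The main obstacle will be step (2)–(3): carefully bookkeeping the binomial factors $(1+x)^{\wt(\tau^+)-1+\mu}$ and the several $O_{g_\mu}(V)$-relations among the low-degree vectors $e(-2)\mathbf 1$, $h(-1)e(-1)\mathbf 1$, $e(-1)\omega$, etc., so that no spurious $\mu$-linear term survives and the quadratic $(1+\mu)(1-\mu)$ emerges cleanly; this is exactly the place where an arithmetic slip would change the eigenvalue, so it warrants the $\mu=0$ consistency check against Proposition \ref{zhu-str}.
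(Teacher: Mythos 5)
Your overall strategy is the same as the paper's: part (i) by showing $\tau^{\pm},\overline{\tau}^{\pm}\in O_{g_\mu}(V)$ so that $A_{g_\mu}(V)$ is generated by $[e],[f],[h]$, and part (ii) by expanding $\tau^{+}\circ_{g_\mu}\overline{\tau}^{+}$ and feeding the result into Lemma \ref{lemma-c}. However, there is a genuine error in the key computation: the exponent in $\circ_{g_\mu}$ is governed by the eigenvalues of $g_\mu\sigma$ (with $\sigma$ the canonical parity automorphism), not of $g_\mu$ alone --- see the standing assumption (\ref{aut-3})--(\ref{aut-4}) preceding the definition of the twisted Zhu algebra. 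Since $\tau^{+}=e^{\delta}$ is odd and has $\delta(0)$--eigenvalue $1$, its $g_\mu\sigma$--degree is $\alpha=\mu+\tfrac12$, so the correct kernel is
$$\tau^{+}\circ_{g_\mu}\overline{\tau}^{+}=\mathrm{Res}_x\,Y(\tau^{+},x)\,\frac{(1+x)^{\,3/2-1+(\mu+1/2)}}{x}\,\overline{\tau}^{+}
=\mathrm{Res}_x\,Y(\tau^{+},x)\,\frac{(1+x)^{1+\mu}}{x}\,\overline{\tau}^{+},$$
which is exactly relation (\ref{rel-z3}) of the paper. You instead use the $g_\mu$--eigenvalue $\alpha=\mu$ and the exponent $(1+x)^{1/2+\mu}$ (your binomials $\binom{1/2+\mu}{k}$). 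With your exponent the relation becomes $-2[e][\omega]-(\tfrac12+\mu)[e]+(\tfrac12+\mu)(\mu-\tfrac12)[e]=0$, i.e. $[e]\bigl([\omega]+\tfrac{(\mu+1/2)(3/2-\mu)}{2}\bigr)=0$, which at $\mu=0$ gives $[e]([\omega]+\tfrac38)=0$ and fails the very consistency check against Proposition \ref{zhu-str} that you propose. With the correct exponent one gets $-2[e][\omega]-(1+\mu)[e]+\binom{1+\mu}{2}\cdot 2[e]=-2[e][\omega]-(1+\mu)(1-\mu)[e]=0$, as claimed.

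The same confusion shows up in your justification of the hypothesis: you say $\mu+\Z\neq\tfrac12+\Z$ guarantees ``$\bar\mu\neq\bar0$,'' which is false (take $\mu=0$). What the hypothesis actually guarantees is that the $g_\mu\sigma$--degree $\overline{\mu+1/2}$ of the odd generators is nonzero, so that $\delta_{\bar\alpha}=0$ in the formula for $\circ_{g_\mu}$ and the computation above applies. A small further point: $h(-1)e(-2)-h(-2)e(-1)$ vanishes in the (twisted) Zhu algebra just as in the untwisted case; the entire $\mu$--dependence enters only through the binomial coefficients multiplying $\tau^{+}_{0}\overline{\tau}^{+}=e(-2)\mathbf 1$ and $\tau^{+}_{1}\overline{\tau}^{+}=2e(-1)\mathbf 1$, not through a ``$\mu$--dependent multiple'' of those terms.
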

\begin{proof}  Proof is similar to the untwisted case. Since  $A_{g_{\mu} } (V)$ is generated by
$$ [\tau ^{\pm} ], [{ \overline{\tau} } ^{\pm}], [e], [f], [h] ,  $$
and since
 $$ \tau ^{\pm}, \overline{\tau}^{\pm} \in O_{g_{\mu}} (V),$$ we conclude that
that $A_{g_{\mu}} (V)$ is  again generated by $[e], [f], [h] $ which satisfy commutation relations for $sl_2$. This proves assertion (i).

Definition of twisted Zhu's algebra $A_{g_{\mu}}(V)$ implies that
 \bea \label{rel-z3}  &&[G^{+}(-\tfrac{3}{2}) \overline{G}^{+}
(-\tfrac{3}{2}) {\1}] + ( 1 +  \mu)  [G^{+}(-\tfrac{1}{2}) \overline{G}^{+}
(-\tfrac{3}{2}) {\1}] + \nonumber \\ &&  { 1+ \mu \choose 2} [ G^{+}(\tfrac{1}{2}) \overline{G}^{+}
(-\tfrac{3}{2}) {\1}]  = 0. \eea

 Now assertion (ii) follows from Lemma \ref{lemma-c}. \end{proof}

First we will consider $g_{\mu}$--twisted $V$--module $M \otimes F ^{\mu}$. Recall that this module is a direct sum of Wakimoto modules for $\widehat{sl_2}$.

We have the following irreducibility result:

\begin{proposition}
Assume that $\mu \notin \tfrac{1}{2}{\Z}$. Then $M \otimes F^{\mu}$ is an irreducible $V$--module.
\end{proposition}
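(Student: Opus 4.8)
The plan is to deduce irreducibility of the $g_\mu$--twisted $V$--module $M\otimes F^\mu$ from the structure of the twisted Zhu algebra $A_{g_\mu}(V)$ together with Theorem \ref{zhu-corr-mod}. First I would determine the lowest conformal weight subspace of $M\otimes F^\mu$ with respect to the twisted grading. Since $F^\mu\cong\bigoplus_{j\in\Z}M_\delta(1).e^{(j+\mu)\delta}$ as an $M_\delta(1)$--module, and $M\otimes F^\mu=\bigoplus_{j\in\Z}M\otimes M_\delta(1).e^{(\mu+j)\delta}$ is a direct sum of Wakimoto modules for $\widehat{sl_2}$, I would identify the bottom component $(M\otimes F^\mu)(0)$ and compute the action of $[e],[h],[f]$ and $[\omega]$ on it. The expectation is that, as an $sl_2$--module, this bottom component is isomorphic to a module $U_{\mu',r}$ from the list appearing before Theorem \ref{ired-relaxed}, with the Casimir eigenvalue forced by Proposition \ref{zhu-str-twisted}(ii): namely $[e]([\omega]+\tfrac{(1+\mu)(1-\mu)}{2})=0$ together with $L(0)=\tfrac{r(r+2)}{2}$ on a highest weight vector pins down the relevant $sl_2$--module. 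Under the hypothesis $\mu\notin\tfrac12\Z$ one checks this $A_{g_\mu}(V)$--module is irreducible.

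The second step is to promote irreducibility of the bottom component to irreducibility of the whole twisted module. Here I would argue as in the proof of Theorem \ref{ired-relaxed}: if $M\otimes F^\mu$ were reducible it would contain a graded submodule not generated by its intersection with the bottom component, and then, using the structure of $A_{g_\mu}(V)$ as a quotient of $U(sl_2)$, one concludes that $M\otimes F^\mu$ would have to contain a nonzero vector of conformal weight equal to the lowest one that is annihilated by $e(0),h(0),f(0)$ in a way incompatible with the $sl_2$--module structure $U_{\mu',r}$ (which has no such finite-dimensional constituent when $r\notin\Z$, forced by $\mu\notin\tfrac12\Z$). Alternatively, and perhaps more cleanly, one invokes Proposition \ref{novi-moduli}: since $M\otimes F^\mu=(M\otimes F, Y(\Delta(\mu\delta(-1){\bf 1},z)\cdot,z))$ is obtained from the untwisted $V$--module $M\otimes F$ by the $\Delta$--construction, irreducibility of $M\otimes F^\mu$ as a twisted $V$--module is equivalent to irreducibility of $M\otimes F$ as an (untwisted) $V$--module — but that is false, so this shortcut does not apply directly, and one instead applies Proposition \ref{novi-moduli} to an appropriate irreducible untwisted $V$--module whose $\Delta$--deformation is $M\otimes F^\mu$ (or a submodule thereof).

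The cleanest route is therefore the following: identify, inside $M\otimes F$ with its $\delta(0)$--grading $M\otimes F=\bigoplus_\ell (M\otimes F)^{(\ell)}$, the relevant irreducible $V$--submodule $W$, show $M\otimes F^\mu$ is $V$--isomorphic to the $\Delta$--deformation $\widetilde W$ of $W$ (using that $V\subset M\otimes M_\delta(1)$ is fixed by $\Delta(\mu\delta(-1){\bf 1},z)$ only up to the grading shift, so the deformation acts nontrivially on the $F^\mu$ part), and then conclude by Proposition \ref{novi-moduli} that $\widetilde W$ is irreducible iff $W$ is. For generic $\mu\notin\tfrac12\Z$ the module $M\otimes F^\mu$ is a single Wakimoto-type block with irreducible bottom $U_{-1,r}$-type component, hence irreducible as a $V$--module.

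\textbf{Main obstacle.} The hard part will be the bookkeeping for the twisted grading and the precise identification of the bottom component $(M\otimes F^\mu)(0)$ together with the induced $sl_2$--action: one must verify that the Casimir eigenvalue coming from Proposition \ref{zhu-str-twisted}(ii) is exactly matched by the $L(0)$--eigenvalue on the lowest component of each Wakimoto summand, and that no spurious singular vectors of the same conformal weight survive when $\mu\notin\tfrac12\Z$ — this last exclusion is precisely where the hypothesis on $\mu$ enters, paralleling the condition $r\notin\Z$ in Theorem \ref{ired-relaxed}.
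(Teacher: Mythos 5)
Your first route (compute the lowest component, check it is an irreducible module over the twisted Zhu algebra $A_{g_\mu}(V)$ of Proposition \ref{zhu-str-twisted}, then rule out submodules generated in higher conformal weight as in Theorem \ref{simplicity} and Theorem \ref{ired-relaxed}) is exactly the paper's argument. However, you misidentify the bottom component: it is \emph{not} a dense module of type $U_{\mu',r}$. The lowest graded piece of $M\otimes F^{\mu}$ is $U(sl_2).(1\otimes e^{(\mu-1)\delta})$, and $1\otimes e^{(\mu-1)\delta}$ is a \emph{highest weight} vector for $sl_2$ with $h(0)$--eigenvalue $\mu-1$; the hypothesis $\mu\notin\tfrac{1}{2}\Z$ guarantees $\mu-1\notin\Zp$, so this highest weight module is an irreducible Verma-type module. (Consistently, $r=\mu-1$ solves $\tfrac{r(r+2)}{2}+\tfrac{(1+\mu)(1-\mu)}{2}=0$, matching Proposition \ref{zhu-str-twisted}(ii).) This correction matters, because the irreducibility of the bottom component is what feeds the Zhu-algebra argument.

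Your proposed ``cleanest route'' via Proposition \ref{novi-moduli} is a dead end and should be dropped. The deformation $\Delta(\mu\delta(-1){\1},z)$ is performed with $h=\mu\delta(-1){\1}$, which lies in $M\otimes F$ (indeed in $F$) but \emph{not} in $V$: the weight-one subspace of $V$ is spanned by $e,h,f$ alone, and $\delta(-1){\1}$ is not in it. Hence Proposition \ref{novi-moduli} transports irreducibility of modules over $M\otimes F$ (or $V_L$), not over the subalgebra $V$. The only candidate untwisted module whose $\Delta$--deformation is $M\otimes F^{\mu}$ is $M\otimes F$ itself, which is reducible (indecomposable) as a $V$--module, so no ``appropriate irreducible untwisted $V$--module'' with this deformation exists; the fact that $M\otimes F^{\mu}$ \emph{is} irreducible over $V$ for $\mu\notin\tfrac{1}{2}\Z$ while its $\mu\to 0$ limit is not is precisely why a direct Zhu-algebra argument, and not a deformation argument, is needed here.
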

\begin{proof}
$M \otimes F ^{\mu}$ is $\tfrac{1}{2} {\Zp}$--graded and its lowest  component is generated by the vector $ 1 \otimes e ^{(\mu -1) \delta}$. This vector is a highest weight vector for $sl_2$ such that
$$h(0) 1 \otimes e ^{( \mu -1)\delta} =  (\mu-1) (1 \otimes e ^{( \mu-1) \delta}).$$
So, lowest component  is  an irreducible $sl_2$--module. Irreducibility of $M \otimes F^{\mu}$ follows easily by using structure of Zhu's algebra $A_{g_{\mu}} (V)$. Proof is similar to that of untwisted case.
\end{proof}

Next we consider a family of  $g_{\mu}$--twisted $V$--modules:

 $$ (M ^{\mu} (r), \widetilde{Y}_{ M ^{\mu} (r) } (\cdot, z) :=  (M ^{\mu} , Y_{ M  (r) } (\Delta (\mu \delta(-1) {\bf 1}) \cdot , z). $$
 Then $M ^{\mu} (r)$ is $g_{\mu}$--twisted $V$--module. $M ^{\mu} (r)$ can be realized as a submodule of $$ V_L. e^{\lambda (\alpha + \beta) + \mu \delta } $$
 where $\lambda=-r-1$. In fact:
 \bea &&  M ^{\mu} {(r)}:={\C}
[ \beta + ({\Z} + \lambda)(\alpha + \beta)] \otimes M_{\alpha,
\beta} (1)
 \otimes F^{\mu}, \label{opis-tw} \eea
where $F ^{\mu}$ is a  $g_{\mu}$--twisted $F$--module realized on $V_{\Z \delta}. e ^{\mu \delta}$.

\begin{theorem}
Assume that $r \notin {\Z}$, $ \mu \notin \tfrac{1}{2}{\Z}$ and $r - \mu \notin {\Z}$. Then $M ^{\mu} {(r)}$ is an irreducible $g_{\mu}$--twisted
$\tfrac{1}{2} {\Zp}$--graded $V$--module whose lowest component is
$$ M^{\mu} {(r)} (0) := U(sl_2). e ^{\beta  + (\mu -1)  \delta  -(r+1) (\alpha + \beta) } \cong U_{\mu -1,r}. $$
\end{theorem}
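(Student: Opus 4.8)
The plan is to mirror the proof of Theorem~\ref{ired-relaxed} (the untwisted case $\mu=0$), adapting each step to the twisted setting. First I would identify the lowest component explicitly. By construction $M^{\mu}(r)$ sits inside $V_L . e^{\lambda(\alpha+\beta)+\mu\delta}$ with $\lambda=-r-1$, and the twisting by $\Delta(\mu\delta(-1){\bf 1})$ shifts only the $\delta$--charge; so the vectors of minimal conformal weight are the $e^{\beta+(\mu-1)\delta-(r+1+i)(\alpha+\beta)}$ for $i\in\Z$. A direct computation of $L(0)$ on these vectors, using the twisted Virasoro field from Proposition~\ref{novi-moduli} (equivalently, the formula for $\omega$ in (\ref{def-virasoro})), shows each of them has conformal weight $-\tfrac{1}{2}$, exactly as in the untwisted case; the shift $e^{(\mu-1)\delta}$ contributes $\tfrac{1}{2}((\mu-1)^2 \cdot \tfrac{2}{p}) - (\mu-1)\cdot(\text{correction})$, which one checks cancels against the $(\alpha+\beta)$--part to leave $-\tfrac{1}{2}$. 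Then using the explicit formulas (\ref{def-e})--(\ref{def-f}) for $e,h,f$ acting on $V_L$, I would verify that $M^{\mu}(r)(0)$ is isomorphic to $U_{\mu-1,r}$ as an $sl_2$--module, reading off the eigenvalues $h(0)E_i=(\mu-1-2r-2i)E_i$ and the $e,f$ actions from the lattice translation operators.

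Next I would prove irreducibility. Suppose $M^{\mu}(r)$ has a graded submodule not generated by $M^{\mu}(r)(0)$. Since $r-\mu\notin\Z$ and $\mu\notin\tfrac12\Z$, the weight $U_{\mu-1,r}$ is an irreducible $U(sl_2)$--module (it is one of the relaxed-type modules, with the Casimir acting by $\tfrac{(\mu-1)(\mu+1)}{2}$ and no integral points where $e$ or $f$ vanishes), so the obstruction to irreducibility must come from a genuinely new lowest-weight vector deeper in the module. Invoking the structure of the twisted Zhu algebra $A_{g_\mu}(V)$ from Proposition~\ref{zhu-str-twisted}, any such singular vector $w$ would have to satisfy $[e]([\omega]+\tfrac{(1+\mu)(1-\mu)}{2})w=0$ together with $e w=f w=h w=0$, forcing $L(0)w=0$. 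Then I would list the conformal-weight-zero vectors: by the character/weight bookkeeping for $M^{\mu}(r)$ (analogous to the Proposition computing $\mathrm{ch}_{M(r)}$), these lie in the span of $e^{\beta-(r+1+i)(\alpha+\beta)+\mu\delta}$ and $e^{\beta-(r+1+i)(\alpha+\beta)+(\mu-2)\delta}$, $i\in\Z$, and none of these can generate a $V$--submodule with lowest conformal weight $0$ — applying $f(0)$ or the odd generators produces nonzero vectors of lower weight, exactly as in Theorem~\ref{ired-relaxed}. This contradiction gives irreducibility.

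The main obstacle I anticipate is the conformal-weight bookkeeping in the twisted setting: one must be careful that the twist $\Delta(\mu\delta(-1){\bf 1})$ shifts gradings so that $M^{\mu}(r)$ remains $\tfrac12\Zp$--graded with the stated lowest component, rather than acquiring fractional shifts that would move the lowest weight away from $-\tfrac12$ or destroy the half-integer grading. Here the hypotheses $\mu\notin\tfrac12\Z$ and $r-\mu\notin\Z$ are exactly what is needed to keep the $sl_2$--module $U_{\mu-1,r}$ irreducible and to ensure no accidental coincidences among conformal weights of the candidate singular vectors. Once the grading is pinned down, the rest is a routine transcription of the $\mu=0$ argument, replacing $A(V)$ by $A_{g_\mu}(V)$ and $e^{\beta-\delta-(r+1)(\alpha+\beta)}$ by $e^{\beta+(\mu-1)\delta-(r+1)(\alpha+\beta)}$ throughout. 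I would close by noting that irreducibility plus the identification of the lowest component gives $M^{\mu}(r)\cong L^{N=4}_c(U_{\mu-1,r})$ as a $g_\mu$--twisted module, as claimed.
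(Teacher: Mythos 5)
Your overall strategy is the right one and matches the paper's (whose proof is essentially a pointer back to Theorem \ref{ired-relaxed} plus Proposition \ref{zhu-str-twisted}): identify the lowest component as $U_{\mu-1,r}$, check it is irreducible over the twisted Zhu algebra, and use the relation $[e]([\omega]+\tfrac{(1+\mu)(1-\mu)}{2})=0$ to exclude singular vectors deeper in the module. However, the "conformal-weight bookkeeping" that you yourself single out as the crux is carried out incorrectly, in two places. First, the lowest component does \emph{not} have conformal weight $-\tfrac{1}{2}$. Computing $L(0)$ from (\ref{def-virasoro}) on $e^{t\alpha+(1+t)\beta+c\delta}$ gives weight $\tfrac{c^2+2c}{2}$ (the $\alpha,\beta$ contributions cancel), so for $c=\mu-1$ the weight is $\tfrac{\mu^2-1}{2}=-\tfrac{(1+\mu)(1-\mu)}{2}$, which reduces to $-\tfrac12$ only when $\mu=0$. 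This value is exactly what consistency with Proposition \ref{zhu-str-twisted}(ii) demands, and it agrees with the Casimir eigenvalue $\tfrac{(\mu-1)(\mu+1)}{2}$ on $U_{\mu-1,r}$ that you correctly record one sentence later — so your own write-up is internally inconsistent: a lowest component of weight $-\tfrac12$ on which $[e]$ acts injectively would violate the twisted Zhu relation for $\mu\neq 0$.

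Second, your list of candidate weight-zero vectors is the untwisted list translated by $e^{\mu\delta}$, and it is wrong: $e^{\beta-(r+1+i)(\alpha+\beta)+\mu\delta}$ and $e^{\beta-(r+1+i)(\alpha+\beta)+(\mu-2)\delta}$ have weights $\tfrac{\mu^2+2\mu}{2}$ and $\tfrac{\mu^2-2\mu}{2}$, neither of which vanishes for $0<\mu<1$. In fact the weights occurring in $M^{\mu}(r)$ are $\tfrac{(\mu+j+1)^2-1}{2}+n$ with $j\in\Z$, $n\in\Zp$, and $0$ is never attained when $\mu\notin\Z$ (one would need $(\mu+j+1)^2=1-2n$); likewise the critical value $\tfrac{\mu^2-1}{2}$ occurs only in the lowest component under the stated hypotheses on $\mu$. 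So the correct conclusion is that the forbidden singular vectors simply do not exist — the twisted case is actually \emph{easier} than Theorem \ref{ired-relaxed}, which had to argue that genuinely existing weight-zero vectors cannot generate a submodule. As written, though, your argument analyzes the wrong set of vectors and asserts a false weight for the lowest component, so it does not go through without these corrections.
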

\begin{proof}
The proof uses the irreducibility of lowest component and the structure of twisted Zhu's algebra $A_{g_{\mu} }( V)$ from Proposition \ref{zhu-str-twisted}.
\end{proof}
Fix the following $\Z$--graduation on $M^{\mu} (s)$:
$$ M^{\mu} (s)= \bigoplus_{j \in {\Z} } ( M^{\mu} (s) ) ^j \quad (M^{\mu} (s)) ^j = \{ v \in M^{\mu} (s) \ \vert \delta(0) v = ( j + \mu -1) v \}. $$

\section{ Realization of the simple vertex operator algebra $L_{
A_2} (-\frac{3}{2}\Lambda_0) $ }

Let $\mathfrak{g}$ be the simple, complex Lie algebra of type $A_2$. The root system of $\mathfrak{g}$ is given by
$$ \Delta = \{ \varepsilon_i - \varepsilon_j \ \vert \ 1 \le i, j \le 3 \ i\ne j \} $$
with $\alpha_1 = \varepsilon_1-\varepsilon_2$, $\alpha_2 = \varepsilon_2 - \varepsilon_3 $ being simple roots. The highest
root is $\theta = \varepsilon_1 - \varepsilon_3$. We shall fix root vectors and coorots  as usual.
For any positive root $\alpha \in \Delta_+$ denote by $e_{\alpha}$ and
$f_{\alpha}$ the root vectors corresponding to $\alpha$ and $-\alpha$, respectively. Let $h_{\alpha}$ be the corresponding
coroot.

Let $\hat{\mathfrak g}$ be the affine Lie algebra of type $A_2 ^{(1)}$. Denote by $\Lambda_i$, $i=0,1,2$ its
fundamental weights.
Let $L_{A_2}( k \Lambda_0)$ be the simple vertex (operator) algebra of level $k$ associated to $\hat{\mathfrak g}$.

\bigskip

Now we shall prove that the simple affine vertex operator algebra  $L_{
A_2} (-\frac{3}{2}\Lambda_0) $ can be embedded into $V \otimes F_{-1}$, where $F_{-1}$ is the simple lattice
 vertex superalgebra $V_{\Z \varphi}$ associated to the lattice $\Z \varphi$ where
$\langle \varphi, \varphi \rangle =-1$. As a vector space,
 $F_{-1} =   M_{\varphi} (1) \otimes {\Bbb C}[\Z \varphi] $ where $M_{\varphi} (1)$
 is the Heisenberg vertex algebra generated
 by the field $\varphi(z)= \sum_{n \in {\Z} } \varphi(n) z ^{-n-1}$, and ${\Bbb C}[\Z \varphi] $
  is the group algebra with generator
 $ e^{\varphi}$. $F_{-1}$ admits the following $\Z$--graduation:
 $$F_{-1} = \bigoplus_{m \in \Z} F_{-1} ^{(m)}, \quad F_{-1} ^{(m)} = M_{\varphi} (1) \otimes e^{m \varphi}. $$

Define

\bea
e_{\theta} &:=& J^{+} \otimes {\bf 1}  =e  \otimes {\bf 1} \label{for-e13} \\
f_{\theta} &:=& J^{-} \otimes {\bf 1} = f \otimes  {\bf 1}  \label{for-f13} \\
e_{\alpha_1} &:=&  \frac{1}{\sqrt{2}}\  \tau^{+} \otimes e^{\varphi} \label{for-e12} \\
f_{\alpha_1} &:=& \frac{1}{\sqrt{2}} \  \overline{\tau} ^{-} \otimes e^{-\varphi} \label{for-f12} \\
e_{\alpha_2} &:=& \frac{1}{\sqrt{2}} \ \overline{\tau} ^{+} \otimes e^{-\varphi} \label{for-e23} \\
f_{\alpha_2} &:=& \frac{1}{\sqrt{2}}\ {\tau} ^{-} \otimes e^{\varphi} \label{for-f23} \\
h_{\alpha_1} &:=& (-\beta - \frac{3}{2} \varphi +\frac{1}{2} \delta) (-1) \label{for-h1} \\
h_{\alpha_2} &:=& (-\beta + \frac{3}{2} \varphi +\frac{1}{2} \delta) (-1) \label{for-h2}.
\eea

\begin{lemma}
Formulas (\ref{for-e13})-(\ref{for-h2}) define a vertex algebra homomorphism:
$$ \Phi : N_{A_2} (-\frac{3}{2} \Lambda_0) \rightarrow V \otimes F. $$
\end{lemma}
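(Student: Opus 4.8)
The plan is to verify that $\Phi$ respects the defining relations of the generalized Verma module $N_{A_2}(-\tfrac32\Lambda_0)$, which amounts to checking two things: first, that the images $\Phi(x(-1){\bf 1})$ for $x$ ranging over the Chevalley generators $e_{\alpha_i}, f_{\alpha_i}, e_\theta, f_\theta, h_{\alpha_i}$ (and hence over a spanning set of $\mathfrak g$) satisfy, under the vertex-operator $n$-th product operations in $V\otimes F$, the same $\lambda$-bracket relations as $\widehat{\mathfrak g}$ of level $-\tfrac32$; and second, that $\Phi({\bf 1}) = {\bf 1}\otimes{\bf 1}$. Since $N_{A_2}(-\tfrac32\Lambda_0)$ is freely (universally) generated by the $\widehat{\mathfrak g}$-currents subject only to these relations, establishing them forces $\Phi$ to be a well-defined vertex algebra homomorphism. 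Concretely, I would write $X(z) := Y(\Phi(x(-1){\bf 1}),z)$ for each generator $x$ and check that
\begin{eqnarray}
&& [X(z_1), Y(z_2)] = Y([x,y](z_2))\,\delta\!\left(\tfrac{z_2}{z_1}\right) z_1^{-1} + (x,y)\, \partial_{z_2}\delta\!\left(\tfrac{z_2}{z_1}\right) z_1^{-1} \cdot(-\tfrac32) \nonumber
\end{eqnarray}
holds for all pairs, where $(\cdot,\cdot)$ is the normalized invariant form on $A_2$ (for which $(h_{\alpha_i},h_{\alpha_i})=2$, $(e_\alpha,f_\alpha)$ the appropriate value, etc.).

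The computations split by the "type" of the tensor factors. The Cartan-Cartan brackets $[h_{\alpha_i}, h_{\alpha_j}]$ come purely from the Heisenberg part of $V_L\otimes F$: using $\langle\alpha,\alpha\rangle=1$, $\langle\beta,\beta\rangle=-1$, $\langle\delta,\delta\rangle=1$ (here $p=2$) and $\langle\varphi,\varphi\rangle=-1$, one reads off $\langle h_{\alpha_i},h_{\alpha_i}\rangle = -1 - \tfrac94 + \tfrac14 = -3$ — wait, the level is $-\tfrac32$ so the central term should be $-\tfrac32\langle h_{\alpha_i},h_{\alpha_i}\rangle_{\mathrm{norm}} = -\tfrac32\cdot 2 = -3$, matching; and $\langle h_{\alpha_1},h_{\alpha_2}\rangle = -1 + \tfrac94\cdot(-1)\cdot 0\,$… more precisely the cross term gives $(-\beta)(-\beta) + (\tfrac32\varphi)(-\tfrac32\varphi) + (\tfrac12\delta)(\tfrac12\delta) = -1 + \tfrac94 + \tfrac14$ — this needs the careful sign bookkeeping, and it should land on $-\tfrac32\cdot(-1) = \tfrac32$. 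The Cartan-root brackets $[h_{\alpha_i}(z_1), e_{\alpha_j}(z_2)]$ test that $\delta(0)$, $\beta(0)$, $\varphi(0)$ act on the relevant lattice/twisted vectors with the correct eigenvalues; these are immediate from the lattice grading together with the formulas $h = -2\beta(-1)+\delta(-1)$ and the known $\widehat{sl_2}$-weights of $\tau^\pm, \overline\tau^\pm$ established in Section~5 and the Lemma there. The genuinely substantive brackets are the root-root ones: $[e_{\alpha_1}(z), f_{\alpha_1}(z)]$, $[e_{\alpha_2}(z), f_{\alpha_2}(z)]$, $[e_{\alpha_1}(z), e_{\alpha_2}(z)]$ (which must produce $e_\theta$ up to the structure constant), $[e_{\alpha_1}(z), f_\theta(z)]$, and so on. For these I would lean heavily on the already-computed products in the preceding sections: the identities $\tau^+_0\overline\tau^{\pm} = J^\pm(-2){\bf 1}$, $\tau^+_1\overline\tau^\pm = 2J^\pm(-1){\bf 1}$, $\tau^\pm_n\overline\tau^\pm = 0$ for $n\ge 2$ from the proof of the first Lemma of Section~5, plus the $N=2$ and $\widehat{sl_2}$-module structure of the span of $\tau^\pm,\overline\tau^\pm$, together with the contribution of the $F_{-1}$-lattice factor $e^{\pm\varphi}$ with $\langle\varphi,\varphi\rangle=-1$, which supplies the sign flips needed so that e.g. $[e_{\alpha_1}, f_{\alpha_1}]$ closes on $h_{\alpha_1}$ rather than producing unwanted terms.

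The main obstacle I anticipate is precisely this last bundle of root-root checks: one has to confirm that the cross terms between the $V$-factor fields (built from $\tau, \overline\tau, e, f, h$ on the lattice $L$) and the $F_{-1}$-factor fields (built from $e^{\pm\varphi}$) conspire so that (a) the $\widehat{sl_2}\subset\widehat{A_2}$ relations already known in $V$ are not spoiled, (b) the new currents $e_{\alpha_i}, f_{\alpha_i}$ have the right OPE with $e_\theta, f_\theta$ and with $h_{\alpha_i}$, and — most delicately — (c) the Serre-type relations implicit in $N_{A_2}$ hold, i.e. $[e_{\alpha_1}(z_1),[e_{\alpha_1}(z_2), e_{\alpha_2}(z_3)]]$-type triple products vanish appropriately (equivalently $\tau^+_n\tau^+ = 0$ in the relevant range, which follows since $\tau^+ = e^\delta$ and $\langle\delta,\delta\rangle=1>0$ makes $e^\delta$ "fermionic-like"). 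The $\tfrac{1}{\sqrt2}$ normalizations in \eqref{for-e12}-\eqref{for-f23} are chosen exactly to fix the structure constants, so part of the work is bookkeeping to confirm these are the correct scalars; I would organize this by first pinning down the invariant form normalization on $A_2$, then computing each $e_{\alpha_i}\,{}_{(0)}\,f_{\alpha_i}$ and each $e_{\alpha_1}\,{}_{(0)}\,e_{\alpha_2}$ explicitly using the lattice calculus and the Section~5 identities, and checking the central charges/levels via the $n=1$ products. Once all generating relations are matched, universality of $N_{A_2}(-\tfrac32\Lambda_0)$ gives the homomorphism $\Phi$ for free, and $\Phi({\bf 1})={\bf 1}\otimes{\bf 1}$ is clear by construction.
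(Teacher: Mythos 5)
Your proposal is correct in outline and would work; the global strategy --- specify the images of the eight basis currents, verify all pairwise commutation relations of $\widehat{sl_3}$ at level $-\tfrac{3}{2}$, and then invoke the commutator formula together with the universality of the generalized Verma module $N_{A_2}(-\tfrac{3}{2}\Lambda_0)$ --- is exactly the paper's. The difference lies in how the hardest checks are discharged. You plan to compute the diagonal brackets $[e_{\alpha_i}(z_1),f_{\alpha_i}(z_2)]$ by direct lattice calculus; the paper instead observes that $\tau^+,\overline{\tau}^-$ (resp.\ $\tau^-,\overline{\tau}^+$) generate an $N=2$ superconformal vertex algebra with $c=-9$ and then cites the inverse Kazama--Suzuki correspondence between this $N=2$ algebra and $L_{A_1}(-\tfrac{3}{2}\Lambda_0)$ (the references [A-IMRN], [FST]), which delivers at once that $e_{\alpha_i},f_{\alpha_i},h_{\alpha_i}$ generate a level $-\tfrac{3}{2}$ copy of $\widehat{sl_2}$ --- including the value $-\tfrac{3}{2}=\tfrac{1}{2}\cdot\tfrac{c}{3}$ of the central term that you propose to recover by hand from the third-order pole of $G^+\overline{G}^-$ against the first-order zero of $e^{\varphi}e^{-\varphi}$. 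The remaining cross-relations ($e_{\alpha_1}(0)e_{\alpha_2}=e_\theta$, $e(0)f_{\alpha_1}=-e_{\alpha_2}$, etc.) are read off in the paper from the $N=4$ $\lambda$-brackets, which is consistent with your plan to reuse the Section~5 identities. Your route is more self-contained but substantially more laborious; the paper's is shorter at the cost of importing the $N=2\leftrightarrow\widehat{sl_2}$ correspondence as a black box. Your numerical spot-checks ($\langle h_{\alpha_1},h_{\alpha_1}\rangle=-3=-\tfrac{3}{2}\cdot 2$ and $\langle h_{\alpha_1},h_{\alpha_2}\rangle=\tfrac{3}{2}=-\tfrac{3}{2}\cdot(-1)$) are correct, and your remark that the Serre-type relation $[e_{\alpha_1},[e_{\alpha_1},e_{\alpha_2}]]=0$ reduces to the regularity of the $e^{\delta}$--$e^{\delta}$ and $e^{\alpha+\beta}$--$e^{\delta}$ OPEs is sound.
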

\begin{proof}
Since $ \tau^+, \overline{\tau}^-$ generate $N=2$ superconformal vertex algebra with $c=-9$, then $e_{\alpha_1}, f_{\alpha_1}, h_{\alpha_1}$ must  generate affine vertex algebra $A_1 ^{(1)} $ at level $k=-3/2$, i.e., $L_{A_1}(-\frac{3}{2} \Lambda_0)$ (see \cite{A-IMRN} and \cite{FST}). Similarly $e_{\alpha_2}, f_{\alpha_2}, h_{\alpha_2}$ also generate a copy of $L_{A_1}(-\frac{3}{2} \Lambda_0)$. By using defining relations for the $N=4$ superconformal algebra we have:
\bea
&& e_{\alpha_1} (0) f_{\alpha_2} = e_{\alpha_2} (0)  f_{\alpha_1} = 0,  \nonumber \\
&& e_{\alpha_1} (0) e_{\alpha_2} = \frac{1}{2}   (G^{+} (1/2)  \overline{ G} ^+ (-3/2) {\bf 1} )  \otimes {\bf 1} = e \otimes {\bf 1}= e_{\theta}, \nonumber \\
&& f_{\alpha_1} (0) f_{\alpha_2} =  \frac{1}{2}   (\overline{G} ^{-} (1/2)  G^- (-3/2) {\bf 1} )  \otimes {\bf 1} = -f  \otimes {\bf 1}= -f _{\theta}, \nonumber \\
&& e(0) f_{\alpha_1}=   \frac{1}{\sqrt{2}} ( e(0) \overline{\tau}^-) \otimes e^{-\varphi}= -\frac{1}{\sqrt{2}} \overline{\tau} ^+ \otimes e^{-\varphi} =- e_{\alpha_2}, \nonumber  \\
&& e(0) f_{\alpha_2}=   \frac{1}{\sqrt{2}} ( e(0) {\tau}^-) \otimes e^{\varphi}= \frac{1}{\sqrt{2}} {\tau} ^+ \otimes e^{\varphi} = e_{\alpha_1},  \nonumber \\
&& f(0) e _{\alpha_1}=   \frac{1}{\sqrt{2}} ( f(0) \tau ^+  ) \otimes e^{\varphi}= \frac{1}{\sqrt{2}} \overline{\tau} ^-  \otimes e^{\varphi} = f_{\alpha_2}, \nonumber  \\
&& f(0) e_{\alpha_2}=   \frac{1}{\sqrt{2}} ( f(0)  \overline{\tau}^-) \otimes e^{-\varphi}= -\frac{1}{\sqrt{2}} \overline{ {\tau} } ^-  \otimes e^{-\varphi} =- f_{\alpha_1}.  \nonumber
\eea
Now assertion follows from  the commutator formula for vertex algebras.
\end{proof}

Denote by $\pi_s$, $s \in \Z$, the automorphism of $U(\hg)$ uniquely determined by
$$ \pi_s( e_{\alpha_1} (n) ) = e_{\alpha_1} (n+s),   \pi_s( e_{\alpha_2} (n) ) = e_{\alpha_2} (n-s) $$
$$ \pi_s( f_{\alpha_1} (n) ) = f_{\alpha_1} (n-s),   \pi_s( f_{\alpha_2} (n) ) = f_{\alpha_2} (n+s) $$
$$ \pi_s (h_{\alpha_1} (n) ) =h_{\alpha_1} (n) + s k \delta_{n,0},   \pi_s (h_{\alpha_2} (n) ) =h_{\alpha_2} (n) - s k \delta_{n,0}. $$

In our explicit realization operator $e ^{s \varphi}$ is such automorphism. We have:

\begin{theorem}
The subalgebra of $V \otimes F_{-1}$ generated by vectors (\ref{for-e13})-(\ref{for-h2}) is isomorphic
to the vertex operator algebra
$L_{A_2} (-\frac{3}{2}\Lambda_0) $. Moreover, $V \otimes F_{-1}$ is a simple current extension of $L_{
A_2} (-\frac{3}{2}\Lambda_0) $ and
$$ V \otimes F_{-1} =  \bigoplus_{s \in {\Z} } \pi_s (L_{
A_2} (-\frac{3}{2}\Lambda_0) ) $$
where $\pi_s (L_{
A_2} (-\frac{3}{2}\Lambda_0) ) = e ^{s \varphi}  L_{
A_2} (-\frac{3}{2}\Lambda_0)$.
\end{theorem}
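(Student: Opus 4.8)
The plan is to prove the theorem in three stages: first establish that $\Phi$ from the previous lemma actually lands in $V\otimes F_{-1}$ and factors through the simple quotient $L_{A_2}(-\tfrac32\Lambda_0)$; then identify the image with the degree-zero piece of a $\Z$-grading; and finally show the sum over all $\pi_s$-twists exhausts $V\otimes F_{-1}$ and is a simple current extension. For the first stage, I would note that $F=F_1$ and $F_{-1}$ differ only in the sign of the lattice form, so the generators $e_{\alpha_i},f_{\alpha_i},h_{\alpha_i},e_\theta,f_\theta$ of the lemma indeed lie in $V\otimes F_{-1}$; since the lemma already gives a homomorphism $N_{A_2}(-\tfrac32\Lambda_0)\to V\otimes F_{-1}$, it remains to check that its image $\widetilde V$ is a \emph{simple} vertex algebra, whence it must be a quotient of $N_{A_2}$ that is simple, i.e. $L_{A_2}(-\tfrac32\Lambda_0)$. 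Simplicity of $\widetilde V$ I would deduce exactly as in Theorem \ref{simplicity} and Corollary \ref{characterization}: the level $k=-\tfrac32$ is admissible for $A_2^{(1)}$, so the maximal ideal of $N_{A_2}(-\tfrac32\Lambda_0)$ is known explicitly, and one checks the singular vector generating it is killed by $\Phi$; alternatively, one exhibits $\widetilde V$ as the image and argues that any proper ideal would force a lowest-weight contradiction using the known structure of $A(L_{A_2}(-\tfrac32\Lambda_0))$.

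For the second stage, I would use the $\Z$-grading on $F_{-1}$ by $\varphi$-charge, $F_{-1}=\bigoplus_m F_{-1}^{(m)}$ with $F_{-1}^{(m)}=M_\varphi(1)\otimes e^{m\varphi}$, combined with the $\Z$-grading $V=\bigoplus_\ell V^{(\ell)}$ by $\delta(0)$-eigenvalue from the Corollary after Theorem \ref{simplicity}. The generators of $\widetilde V$ are bihomogeneous: $e_\theta,f_\theta$ have $\varphi$-charge $0$; $e_{\alpha_1},f_{\alpha_2}$ have charge $+1$; $f_{\alpha_1},e_{\alpha_2}$ have charge $-1$; and the Cartan generators $h_{\alpha_i}$ have charge $0$. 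A short check shows the combined grading by (a suitable linear combination of $\varphi$-charge and $\delta(0)$-charge) is preserved, and that $\widetilde V$ sits inside the ``diagonal'' subspace; more precisely one verifies $\widetilde V\subset\bigoplus_m V^{(?)}\otimes F_{-1}^{(m)}$ with the $\delta$-charge determined by $m$ through formulas (\ref{for-h1})--(\ref{for-h2}). Applying $e^{s\varphi}_0$ (which realizes the automorphism $\pi_s$, as observed just before the theorem) to $\widetilde V$ shifts the $\varphi$-charge by $s$, so the $\pi_s(\widetilde V)$ are pairwise distinct and mutually ``orthogonal'' for the $\varphi$-grading, giving the direct sum $\bigoplus_s\pi_s(\widetilde V)\subseteq V\otimes F_{-1}$.

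For the third stage — surjectivity — I would argue that $\bigoplus_{s\in\Z}\pi_s(\widetilde V)$ is a vertex subalgebra of $V\otimes F_{-1}$ (closure under products follows since $e^{s\varphi}$ normalizes $\widetilde V$ and the $\varphi$-charges add), contains $e^\varphi$ and $e^{-\varphi}$ (these appear, up to scalars and $V$-factors, as $e_{\alpha_1}$, $f_{\alpha_1}$, etc., acted on appropriately, or more directly are in $\pi_{\pm1}(\widetilde V)$), hence contains all of $M_\varphi(1)$ and $\C[\Z\varphi]$, i.e. all of $F_{-1}$; and it contains $V\otimes\1$ since the $N=4$ generators $e,f,\tau^\pm,\overline\tau^\pm$ all arise (the $J$'s directly as $e_\theta,f_\theta$, the $\tau$'s from $e_{\alpha_i},f_{\alpha_i}$ after clearing the $e^{\pm\varphi}$ factor using $F_{-1}$). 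Therefore the subalgebra is all of $V\otimes F_{-1}$, proving both the decomposition and, since each $\pi_s(L_{A_2}(-\tfrac32\Lambda_0))$ is an irreducible module for $L_{A_2}(-\tfrac32\Lambda_0)$ with one-dimensional top and the fusion $\pi_s\cdot\pi_t\cong\pi_{s+t}$ is visible from the $\varphi$-charge additivity, that $V\otimes F_{-1}$ is a simple current extension.

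The main obstacle I anticipate is the first stage: rigorously showing $\Phi$ descends to the simple quotient, i.e. that $\widetilde V\cong L_{A_2}(-\tfrac32\Lambda_0)$ rather than some larger quotient of $N_{A_2}(-\tfrac32\Lambda_0)$. This requires either knowing the singular vector in $N_{A_2}(-\tfrac32\Lambda_0)$ generating the maximal submodule (available since $-\tfrac32$ is an admissible level, by Malikov–Feigin–Fuks type formulas or Adamovi\'c–Per\v se) and checking $\Phi$ annihilates it, or an internal simplicity argument for $\widetilde V$ paralleling Theorem \ref{simplicity} using the structure of $A(V)\cong$ a quotient of $U(sl_2)$ to control lowest components; both are doable but constitute the real content. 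The grading bookkeeping and the surjectivity argument, by contrast, are essentially formal once the generators' charges are tabulated.
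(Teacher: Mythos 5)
Your stages 2 and 3 (the $\varphi$-charge bookkeeping, the identification of the image with a charge-zero subspace, and the surjectivity of $\bigoplus_s \pi_s(\widetilde V)$) are consistent with what the paper does. The problem is stage 1: you correctly identify that proving $\widetilde V\cong L_{A_2}(-\tfrac32\Lambda_0)$ rather than a larger quotient of $N_{A_2}(-\tfrac32\Lambda_0)$ is ``the real content,'' but you then leave it unresolved, offering two routes (killing the explicit singular vector of the generalized Verma module at the admissible level, or a lowest-component argument via the Zhu algebra) neither of which is carried out. The second route is also circular as stated: you cannot invoke ``the known structure of $A(L_{A_2}(-\tfrac32\Lambda_0))$'' to prove that your image \emph{is} $L_{A_2}(-\tfrac32\Lambda_0)$; you would have to work with $A(\widetilde V)$ or $A(N_{A_2}(-\tfrac32\Lambda_0))=U(sl_3)$ and derive the needed relation there, which is extra work the paper only does later (relation (\ref{rel-A2})) and for a different purpose.

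The paper's proof avoids both computations by turning your stage 2 into the engine for stage 1. Since $V$ is simple (Theorem \ref{simplicity}) and $F_{-1}$ is simple, $V\otimes F_{-1}$ is a simple vertex superalgebra; the Heisenberg element $(\delta+\varphi)(0)$ acts semisimply and gives a $\Z$-grading $V\otimes F_{-1}=\bigoplus_\ell (V\otimes F_{-1})^{(\ell)}$ compatible with products. By the same general principle already used for $V^{(0)}$ after Theorem \ref{simplicity} (property (\ref{grad-v})), the charge-zero component $(V\otimes F_{-1})^{(0)}$ is then \emph{automatically} a simple vertex algebra and each $(V\otimes F_{-1})^{(\ell)}$ is a simple module over it. Hence the only thing left to check is that the subalgebra generated by (\ref{for-e13})--(\ref{for-h2}) exhausts $(V\otimes F_{-1})^{(0)}$; this is done by the four displayed relations recovering $\tau^\pm\otimes{\bf 1}$ and $\overline{\tau}^\pm\otimes{\bf 1}$ from $e_{\alpha_i}(-2)$, $f_{\alpha_i}(-2)$ applied to ${\bf 1}\otimes e^{\mp\varphi}$ (together with the argument of Theorem 6.1 of \cite{A-2007}). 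Once $\widetilde V=(V\otimes F_{-1})^{(0)}$ is known to be simple, it is a simple quotient of $N_{A_2}(-\tfrac32\Lambda_0)$ by the Lemma, hence equals $L_{A_2}(-\tfrac32\Lambda_0)$, with no singular-vector computation needed. You should restructure your argument so that simplicity is deduced from the grading of the simple ambient algebra, and spend your effort instead on the generation statement for the charge-zero component, which is where the actual verification lies.
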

\begin{proof}
First we notice that $(\delta+ \varphi )(0)$ acts semi-simply on the simple vertex superalgebra
$V \otimes F_{-1}$ and defines the following ${\Z}$--graduation
$$ V \otimes F_{-1} = \bigoplus_{\ell} (V\otimes F_{-1} ) ^{(\ell)},   $$
where
$$(V\otimes F_{-1} ) ^{(\ell)} =\{ v \in V\otimes F_{-1} \ \vert \ (\delta + \varphi ) (0) v = \ell v \}. $$
Then $(V\otimes F_{-1} ) ^{(0)}$ is a simple vertex algebra and each $(V\otimes F_{-1} ) ^{(\ell)}$ is a simple
$(V\otimes F_{-1} ) ^{(0)}$--module.

 For the proof of simplicity, it suffices to prove that $(V\otimes F_{-1} ) ^{(0)}$ is generated by vectors (\ref{for-e13})-(\ref{for-h2}).
 This can be proved by using relations
 \bea
 \tau ^+ \otimes {\bf 1}&=& \sqrt{2} e_{\alpha_1} (-2)  . ( {\bf 1} \otimes e^{-\varphi} ), \nonumber \\
 \tau ^- \otimes {\bf 1}&=& \sqrt{2} f_{\alpha_2} (-2)  . ( {\bf 1} \otimes e^{-\varphi} ), \nonumber \\
 \overline{\tau} ^+ \otimes {\bf 1}&=& \sqrt{2} e_{\alpha_2} (-2)  . ( {\bf 1} \otimes e^{\varphi} ), \nonumber \\
 \overline{\tau} ^- \otimes {\bf 1}&=& \sqrt{2} f_{\alpha_1} (-2)  . ( {\bf 1} \otimes e^{\varphi} ). \nonumber
 \eea
 and  analogous proof to that of Theorem 6.1 from \cite{A-2007}.
\end{proof}

\begin{corollary}
We have the following inclusions of vertex algebras
$$ L_{
A_2} (-\frac{3}{2}\Lambda_0) \subset M \otimes \widetilde{\Pi} (0) \subset \Pi(0) \otimes \widetilde{\Pi} (0) $$
where $\Pi(0)$ is defined by (\ref{def-pi0}) and
$$\widetilde{\Pi} (0)={\C} [ \Z (\delta + \varphi)] \otimes M_{\delta, \varphi} (1). $$
\end{corollary}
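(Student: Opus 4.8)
The plan is to exhibit all three vertex algebras as subalgebras of a single ambient lattice vertex superalgebra and then check each inclusion on generators. Recall from Section~\ref{real-sl2} that $F=M_{\delta}(1)\otimes\C[\Z\delta]$ and $F_{-1}=M_{\varphi}(1)\otimes\C[\Z\varphi]$ with $\la\delta,\delta\ra=1$, $\la\varphi,\varphi\ra=-1$, so that $e^{n(\delta+\varphi)}=e^{n\delta}\otimes e^{n\varphi}$ and $\la\delta+\varphi,\delta+\varphi\ra=0$. Consequently $\widetilde\Pi(0)=\C[\Z(\delta+\varphi)]\otimes M_{\delta,\varphi}(1)$ is a vertex subalgebra of $F\otimes F_{-1}$ (in fact $\widetilde\Pi(0)=\mbox{Ker}_{F\otimes F_{-1}}(\delta+\varphi)(0)$), hence $M\otimes\widetilde\Pi(0)$ is a vertex subalgebra of $M\otimes F\otimes F_{-1}\subset V_L\otimes F_{-1}$, the latter being the lattice vertex superalgebra $V_{\Z\alpha\oplus\Z\beta\oplus\Z\delta\oplus\Z\varphi}$. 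By the theorem just proved, $L_{A_2}(-\tfrac32\Lambda_0)$ is the subalgebra of $V\otimes F_{-1}$ generated by the eight vectors (\ref{for-e13})--(\ref{for-h2}), so it suffices to show that each of these lies in $M\otimes\widetilde\Pi(0)$, and separately that $M\subset\Pi(0)$.

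For the first inclusion I would proceed generator by generator, using the explicit evaluations in $M\otimes F$ recorded earlier. By (\ref{def-e-cw})--(\ref{def-f-cw}), $e$ and $f$ lie in $M\otimes M_{\delta}(1)$, so $e_{\theta}=e\otimes\1$ and $f_{\theta}=f\otimes\1$ lie in $M\otimes\widetilde\Pi(0)$. Using $\tau^{+}=\Psi=e^{\delta}$, $\tau^{-}=a^{*}_{-1}\Psi$, $\overline\tau^{+}=2a_{-1}D\Psi^{*}+a_{-2}\Psi^{*}$, $\overline\tau^{-}=D^{2}\Psi^{*}+(2a^{*}_{-1}a_{-1}+\delta(-1))D\Psi^{*}+a^{*}_{-1}a_{-2}\Psi^{*}$, together with $\Psi^{*}=e^{-\delta}$ and $D^{k}\Psi^{*}\in M_{\delta}(1).e^{-\delta}$, one sees that $\tau^{\pm}\otimes e^{\pm\varphi}$ and $\overline\tau^{\pm}\otimes e^{\mp\varphi}$ have the form (element of $M\otimes M_{\delta}(1)$)$.e^{\pm(\delta+\varphi)}$, hence $e_{\alpha_1},f_{\alpha_1},e_{\alpha_2},f_{\alpha_2}\in M\otimes\widetilde\Pi(0)$. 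Finally, comparing (\ref{def-h-cw}) with $h=-2\beta(-1)+\delta(-1)$ gives $\beta(-1)\1=a^{*}_{-1}a_{-1}\1\in M$, so $h_{\alpha_1}=(-\beta-\tfrac32\varphi+\tfrac12\delta)(-1)\1$ and $h_{\alpha_2}=(-\beta+\tfrac32\varphi+\tfrac12\delta)(-1)\1$ lie in $M\otimes M_{\delta,\varphi}(1)\subset M\otimes\widetilde\Pi(0)$. Since these eight vectors generate $L_{A_2}(-\tfrac32\Lambda_0)$, the inclusion $L_{A_2}(-\tfrac32\Lambda_0)\subset M\otimes\widetilde\Pi(0)$ follows.

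For the second inclusion it is enough to note that the Weyl vertex algebra $M$ is generated by $a=e^{\alpha+\beta}$ and $a^{*}=-\alpha(-1)e^{-\alpha-\beta}$, both of which already lie in $\Pi(0)=\C[\Z(\alpha+\beta)]\otimes M_{\alpha,\beta}(1)$; thus $M$ is a vertex subalgebra of $\Pi(0)$ and tensoring with $\widetilde\Pi(0)$ gives $M\otimes\widetilde\Pi(0)\subset\Pi(0)\otimes\widetilde\Pi(0)$. The only point that genuinely needs care is the claim $\beta(-1)\1\in M$: a priori this vector lies only in $V_L$, and it is precisely the free-field formula (\ref{def-h-cw}) for $h$ that forces it into the Weyl subalgebra $M$. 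Everything else is routine bookkeeping, tracking which of the variables $\alpha,\beta$, $\delta$, $\varphi$ each expression involves.
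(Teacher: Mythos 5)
Your proof is correct and follows the route the paper evidently intends (the corollary is stated without proof as an immediate consequence of the preceding theorem): check that each of the eight generators (\ref{for-e13})--(\ref{for-h2}) lies in $M\otimes\widetilde{\Pi}(0)$, using the free-field formulas and the identity $\beta(-1){\bf 1}=:a^{*}a:\,{\bf 1}\in M$ forced by comparing (\ref{def-h}) with (\ref{def-h-cw}), and then note $M\subset\Pi(0)$ from the explicit form of $a,a^{*}$. Nothing is missing.
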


Let $\mu \in {\R}$. Consider now $h_{\mu}:= e^{ 2 \pi i  \mu \varphi(0) }$ twisted $F_{-1}$--module
$$ F_{-1}^{\mu} := V_{\Z \varphi} . e ^{\mu \varphi}. $$
As a $M_{\varphi}(1)$--module:
$$ F_{-1} ^{\mu} = \bigoplus_{ j \in {\Z} } F_{-1} ^{\mu + j}, \quad F_{-1} ^{\mu + j} = M_{\varphi} (1). e ^{ ( j + \mu) \varphi}. $$
Let  $U$ be any $g_{\mu}:= e^{ 2 \pi i \mu \delta(0) }$--twisted $V$--module. Since $g_{\mu} h_{\mu} = e ^{ 2 \pi i \mu (\delta + \varphi) (0) }$ acts trivially on $L_{A_2} (-\frac{3}{2} \Lambda_0) = (V \otimes F_{-1} ) ^0 $,  we conclude that $U \otimes F_{-1} ^{\mu}$ is an untwisted $ L_{A_2} ( - \frac{3}{2} \Lambda_0)$--module.
The proof of the following result is analogous to that of Theorem 6.2 from \cite{A-2007}.

\begin{theorem} \label{osnovni-ir-A2}
Assume that $U$ is a $g_{\mu}$--twisted  $V$--module such that $U$ admits the
following ${\Z}$--graduation
\bea && U = \bigoplus_{j \in {\Z} } U^{(j)}, \quad V^{(i)} .
U^{(j)} \subset U ^{(i+j)} . \label{uvjet-grad} \eea
Then
$$ U \otimes F^{\mu} _{-1} = \bigoplus_{s \in {\Z} } {\mathcal L} _{s}(U), \quad  \mbox{where} \ \    {\mathcal L} _{s} (U)
 := \bigoplus_{i \in {\Z} } U^{i} \otimes F_{-1} ^{- s+i + \mu },$$
  is an  $ L_{A_2} (-\frac{3}{2}\Lambda_0)$--module.

If $U$ is irreducible $g_{\mu}$--twisted $V$--module, then for every $s \in {\Z}$  $ {\mathcal
L}_{s}(U)$ is an irreducible (untwisted)  $L_{A_2} (-\frac{3}{2}\Lambda_0)$--module.
\end{theorem}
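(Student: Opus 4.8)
The plan is to follow the strategy of \cite{A-2007}, Theorems 6.1 and 6.2, adapting it to the present setting where $V$ realizes $L_c^{N=4}$ inside $M\otimes F$ and $L_{A_2}(-\tfrac32\Lambda_0)$ sits inside $V\otimes F_{-1}$. First I would verify that the operator $h_\mu = e^{2\pi i\mu\varphi(0)}$ is an automorphism of $F_{-1}$ and that $F_{-1}^\mu = V_{\Z\varphi}.e^{\mu\varphi}$ is the associated $h_\mu$--twisted $F_{-1}$--module, exactly as in the discussion preceding the theorem; since $g_\mu h_\mu = e^{2\pi i\mu(\delta+\varphi)(0)}$ fixes $L_{A_2}(-\tfrac32\Lambda_0) = (V\otimes F_{-1})^0$ pointwise, the tensor product $U\otimes F_{-1}^\mu$ is genuinely an \emph{untwisted} $L_{A_2}(-\tfrac32\Lambda_0)$--module. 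The decomposition $U\otimes F_{-1}^\mu = \bigoplus_{s\in\Z}\mathcal L_s(U)$ is forced by the eigenvalue of $(\delta+\varphi)(0)$: on $U^{i}\otimes F_{-1}^{-s+i+\mu}$ this operator acts by the scalar $(i-1) + (-s+i+\mu) + (\text{const})$, so grouping the summands of $U\otimes F_{-1}^\mu$ according to this eigenvalue gives precisely the $\mathcal L_s(U)$; because $(\delta+\varphi)(0)$ acts trivially on the generators (\ref{for-e13})--(\ref{for-h2}), each $\mathcal L_s(U)$ is preserved by the $\widehat{\mathfrak g}$--action, hence is a submodule. This establishes the first assertion.

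For irreducibility, suppose $U$ is an irreducible $g_\mu$--twisted $V$--module satisfying the $\Z$--graduation (\ref{uvjet-grad}), and fix $s$. Let $W\subset\mathcal L_s(U)$ be a nonzero submodule. The key point is that one can recover the $V$--action on $U$ and the $M_\varphi(1)\otimes\C[\Z\varphi]$--action on $F_{-1}^\mu$ from the $\widehat{\mathfrak g}$--action on $\mathcal L_s(U)$: using the inverse relations in the proof of the preceding theorem --- namely $\tau^+\otimes\mathbf 1 = \sqrt2\,e_{\alpha_1}(-2).(\mathbf 1\otimes e^{-\varphi})$ and its three analogues, together with (\ref{for-e13})--(\ref{for-f13}) which give $e,f\otimes\mathbf 1$ directly --- the components of the fields $\tau^\pm,\overline\tau^\pm,e,h,f$ acting on $U$ are expressible through the affine generators and the operator $e^{\pm\varphi}$ (the latter being one of the $\pi_{\pm1}$--automorphisms realized inside $V\otimes F_{-1}$). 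Since $\tau^\pm,\overline\tau^\pm,e,h,f$ strongly generate $V$, one obtains that $W$ is stable under the full $V$--action on the $U$--tensor-factors, shifted appropriately, and stable under $M_\varphi(1)$ and the lattice operators $e^{\pm\varphi}$ on the $F_{-1}^\mu$ factors. Now take any nonzero $w\in W$; writing $w = \sum_{i} u_i\otimes \xi_i$ with $u_i\in U^{i}$, $\xi_i\in M_\varphi(1).e^{(-s+i+\mu)\varphi}$, one uses the lattice-module structure of $F_{-1}^\mu$ (it is irreducible over $M_\varphi(1)\otimes\C[\Z\varphi]$) and the irreducibility of $U$ over $V$ to conclude, component by component, that $W = \mathcal L_s(U)$.

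The main obstacle will be making the last step precise: extracting, from a general nonzero element $w\in W$ (which is a finite sum of tensors lying in possibly many graded pieces $U^{i}\otimes F_{-1}^{-s+i+\mu}$), a single nonzero pure tensor inside $W$, and then propagating to all of $\mathcal L_s(U)$. This requires a careful argument that the Heisenberg modes $\varphi(n)$ and the affine modes act with enough independence to separate the graded components --- essentially a density/separation argument in the spirit of the proof of Theorem 6.2 in \cite{A-2007} --- and here the hypotheses that $U$ carries the compatible $\Z$--graduation (\ref{uvjet-grad}) and that $F_{-1}^\mu$ is a genuinely irreducible $M_\varphi(1)\otimes\C[\Z\varphi]$--module are exactly what is needed. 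Since the referenced proof in \cite{A-2007} is carried out in an entirely parallel situation (with $\mathcal V$ in place of $V$ and the same $F_{-1}$), I would structure the argument by reduction to that proof, indicating only the substitutions $\mathcal V\rightsquigarrow V$, $\mathfrak{g}=sl_2\rightsquigarrow sl_3$ at the critical level $\rightsquigarrow$ level $-\tfrac32$, and checking that the inverse relations above supply the one new ingredient not already present there.
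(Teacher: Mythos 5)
Your proposal follows essentially the same route as the paper, which gives no details of its own and simply declares the proof analogous to Theorem 6.2 of \cite{A-2007}; your outline (untwistedness via $g_\mu h_\mu$ acting trivially on $(V\otimes F_{-1})^{(0)}$, regrouping $U\otimes F_{-1}^{\mu}$ along the $(\delta+\varphi)(0)$--eigenspaces, and recovering the $V$-- and lattice--actions from the affine generators through the inverse relations) is exactly the intended argument. One small correction: since $\langle\varphi,\varphi\rangle=-1$, the $\varphi(0)$--eigenvalue on $F_{-1}^{-s+i+\mu}$ is $s-i-\mu$, so the $(\delta+\varphi)(0)$--eigenvalue on $U^{(i)}\otimes F_{-1}^{-s+i+\mu}$ is independent of $i$ as required, whereas the formula you wrote has the wrong sign and would spoil the constancy that makes $\mathcal L_s(U)$ an eigenspace.
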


Irreducible modules in the category $\mathcal{O}$ were classified in \cite{P}. This vertex operator algebra is rational in the category $\mathcal{O}$ (cf. \cite{AM}, \cite{Ar}). Here we have free field realization of these modules:
\begin{corollary} All irreducible  $L_{A_2} (-\frac{3}{2} \Lambda_0)$--modules in the category $\mathcal{O}$ can be realized as follows:
\bea &&  L_{A_2} (-\frac{3}{2} \Lambda_0)\cong U(\hg). {\bf 1}, \quad L_{A_2} (-\frac{1}{2} \Lambda_0- \frac{1}{2} \Lambda_1 - \frac{1}{2}  \Lambda_2)\cong U(\hg). e^{- \delta}, \nonumber \\
&&  L_{A_2} (-\frac{3}{2} \Lambda_1) \cong  U(\hg). e ^{ - \beta + \tfrac{1}{2} \delta - \tfrac{1}{2} \varphi }, \quad   L_{A_2} (-\frac{3}{2} \Lambda_2) \cong  U(\hg). e ^{ - \beta + \tfrac{1}{2} \delta + \tfrac{1}{2} \varphi }. \nonumber
\eea
\end{corollary}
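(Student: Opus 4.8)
The plan is to exhibit each of the four modules on the right-hand side as an $\hat{\mathfrak g}$--submodule of one of the concrete vertex-superalgebra modules already built in the paper, check that the stated singular vector has the asserted affine weight, and then invoke the rationality of $L_{A_2}(-\tfrac{3}{2}\Lambda_0)$ in category $\mathcal O$ together with the classification from \cite{P} to conclude that these four exhaust all irreducibles. Concretely, I would use the embedding $L_{A_2}(-\tfrac{3}{2}\Lambda_0)\subset V\otimes F_{-1}$ from the preceding theorem, together with the realizations of $V\cong L_c^{N=4}$ and of $SC\Pi(1)=(M\otimes F)/V$ inside the lattice vertex superalgebra $V_L$, and the identification $V\otimes F_{-1}=\bigoplus_{s\in\Z}\pi_s(L_{A_2}(-\tfrac{3}{2}\Lambda_0))$. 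The point is that $e^{-\delta}$, $e^{-\beta+\frac12\delta\pm\frac12\varphi}$ are honest vectors living in $M\otimes F\otimes F_{-1}\subset V_L\otimes F_{-1}$, hence we can compute the action of $e_{\alpha_i}(0),f_{\alpha_i}(0),h_{\alpha_i}(0)$ on them directly from the explicit formulas (\ref{for-e13})--(\ref{for-h2}) and the lattice OPEs.

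First I would verify that each candidate vector is $\hat{\mathfrak g}_+$--singular, i.e. killed by $e_{\alpha_1}(0),e_{\alpha_2}(0),e_\theta(0)$ and by all positive modes. For $\mathbf 1$ this is clear; for $e^{-\delta}$ note that $e^{-\delta}$ spans the lowest component $(M\otimes F)(0)=U(sl_2).e^{-\delta}\cong U_{-1}$ of the module $SC\Pi(1)$, and tensoring with $\mathbf 1\in F_{-1}$ one computes $h_{\alpha_1}(0),h_{\alpha_2}(0)$ eigenvalues directly from (\ref{for-h1})--(\ref{for-h2}): since $\la\beta,\delta\ra=\la\varphi,\delta\ra=0$ one gets $h_{\alpha_i}(0)e^{-\delta}=-\tfrac12 e^{-\delta}$, consistent with weight $-\tfrac12\Lambda_0-\tfrac12\Lambda_1-\tfrac12\Lambda_2$ at level $-\tfrac32$. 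For $e^{-\beta+\frac12\delta\pm\frac12\varphi}$ one computes the $h_{\alpha_i}(0)$--eigenvalues from the pairings $\la\beta,\beta\ra=-1$, $\la\delta,\delta\ra=1$, $\la\varphi,\varphi\ra=-1$, obtaining the weights $-\tfrac32\Lambda_1$ and $-\tfrac32\Lambda_2$ respectively; then one checks that $e_{\alpha_1}(0)$ and $e_{\alpha_2}(0)$ annihilate it using the free-field formulas, where the key input is that these lattice exponentials have the right pairing with $\alpha+\beta$, $\delta$, $\varphi$ to make the relevant residues vanish. Singularity under positive modes follows because all the generating currents act on lattice exponentials with only non-negative modes producing nonzero action in a bounded way.

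Next I would argue that each of the four cyclic modules $U(\hat{\mathfrak g}).v$ generated by these singular vectors inside $V\otimes F_{-1}$ (or inside the appropriate $\pi_s$--twist of it, which accounts for the $s$--parameter already appearing in Theorem~\ref{osnovni-ir-A2}) is irreducible: this follows because $V\otimes F_{-1}$ and its simple-current translates $\pi_s(L_{A_2}(-\tfrac32\Lambda_0))$ are simple $L_{A_2}(-\tfrac32\Lambda_0)$--modules, and the submodule generated by a single $\hat{\mathfrak g}_+$--singular vector of a fixed conformal weight in a simple module, when that module sits in category $\mathcal O$ for $\hat{\mathfrak g}$, is the irreducible highest-weight module with that weight. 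Finally, by \cite{P} (rationality and classification of category $\mathcal O$ modules for $L_{A_2}(-\tfrac32\Lambda_0)$, see also \cite{AM},\cite{Ar}) the highest weights $-\tfrac32\Lambda_0$, $-\tfrac12(\Lambda_0+\Lambda_1+\Lambda_2)$, $-\tfrac32\Lambda_1$, $-\tfrac32\Lambda_2$ are precisely the four admissible weights of level $-\tfrac32$, so these are all the irreducibles, completing the proof.

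The main obstacle I anticipate is the explicit verification that the three nontrivial exponential vectors are genuinely $\hat{\mathfrak g}_+$--singular — in particular that $e_{\alpha_1}(0)$ and $e_{\alpha_2}(0)$, which involve $\tau^{\pm},\overline\tau^{\pm}$ and hence fermionic lattice vectors like $e^\delta$ and $e^{\alpha+\beta-\delta}$, actually kill $e^{-\beta+\frac12\delta\pm\frac12\varphi}$. This requires tracking the lattice sign factors $\varepsilon(\mu,\nu)$ and confirming that the would-be output vectors $e^{\alpha+\beta-\delta}\cdot e^{-\beta+\frac12\delta\pm\frac12\varphi}$ have conformal weight too low to appear, or that the residue vanishes for degree reasons; this is a finite but somewhat delicate computation in $V_L\otimes F_{-1}$. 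Everything else — the weight computations and the appeal to rationality — is routine.
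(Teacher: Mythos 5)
Your overall strategy is the right one, and indeed the paper offers no explicit proof here: the statement is meant to follow from the classification of admissible weights in \cite{P}, rationality in category $\mathcal{O}$ from \cite{Ar}, the weight computations you carry out (which are correct), and the realization machinery of Theorem \ref{osnovni-ir-A2}. However, the step you flag as "delicate but finite" is where the argument as you set it up actually breaks. The vector $e^{-\delta}\otimes{\bf 1}$ is \emph{not} $\hat{\mathfrak g}_+$--singular in the ambient module $(M\otimes F)\otimes F_{-1}$: since $\tau^+=e^{\delta}$ and $(e^{\delta})_0 e^{-\delta}=\pm{\bf 1}$ (the basic Clifford relation $\{\Psi,\Psi^*\}=1$), one gets
\begin{equation*}
e_{\alpha_1}(0)\,(e^{-\delta}\otimes {\bf 1})\;=\;\tfrac{1}{\sqrt{2}}\,{\bf 1}\otimes e^{\varphi}\;\neq\;0,
\end{equation*}
and ${\bf 1}\otimes e^{\varphi}$ generates the spectral-flow image $\pi_1(L_{A_2}(-\tfrac{3}{2}\Lambda_0))$, which is not in category $\mathcal{O}$. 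So $U(\hg).e^{-\delta}$ computed inside $V_L\otimes F_{-1}$ is strictly larger than $L_{A_2}(-\tfrac12\Lambda_0-\tfrac12\Lambda_1-\tfrac12\Lambda_2)$. The statement only becomes correct after passing to the irreducible quotient $SC\Pi(1)=(M\otimes F)/V$, where the offending vector ${\bf 1}\otimes e^{\varphi}$ dies, and then applying the functor $\mathcal{L}_s$ for the unique $s$ that lands in category $\mathcal{O}$. You mention $SC\Pi(1)$ in passing but then compute in the lattice vertex algebra, which is exactly where the check fails. A similar caveat applies to $e^{-\beta+\frac12\delta\pm\frac12\varphi}$: these live in $g_{1/2}$--twisted modules, i.e. $\mu=\tfrac12$, which is precisely the value excluded from the paper's irreducibility theorems for twisted $V$--modules, so those two cases need a separate identification of the relevant irreducible $g_{1/2}$--twisted module rather than a citation of the existing results.

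A second, smaller issue: your irreducibility argument appeals to the ambient module being simple, but none of the relevant ambient modules ($V\otimes F_{-1}$, $(M\otimes F)\otimes F_{-1}$, $SC\Pi(1)\otimes F_{-1}$) is simple as an $L_{A_2}(-\tfrac32\Lambda_0)$--module; they all decompose (or form non-split extensions) over the spectral-flow index $s\in\Z$. The correct tools are Theorem \ref{osnovni-ir-A2} (irreducibility of $\mathcal{L}_s(U)$ for irreducible graded $U$) applied to the irreducible input modules $V$, $SC\Pi(1)$ and the appropriate twisted modules, or alternatively rationality in category $\mathcal{O}$ from \cite{Ar}, which makes any highest weight module automatically irreducible. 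With these two repairs the proof goes through, and the final appeal to \cite{P} for completeness of the list of four admissible weights is exactly right.
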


Using our realization one easily see that the category $\mathcal{O}$ is not closed under fusion.

Now we will apply Theorem \ref{osnovni-ir-A2} to  irreducible untwisted  $V$--modules from Section \ref{weight}  and twisted $V$--modules constructed in  Section \ref{twisted}.

\begin{corollary} \label{ireducibilni-2}  Assume that $0 < \mu < 1$, $\mu \ne 1/2$.
\item[(1)] For every $r \in {\C} \setminus {\Z}$ and every $s \in
\Z$, $\mathcal{L}_s (M(r))$ is an irreducible $L_{A_2}
(-\frac{3}{2}\Lambda_0)$--module.

\item[(2)]  For  every $s \in
\Z$, $\mathcal{L} _s (M \otimes F^{\mu} )$ is an irreducible $L_{A_2}
(-\frac{3}{2}\Lambda_0)$--module.

\item[(3)] For every $r \in {\C} \setminus {\Z}$ such that $r  - \mu  \notin {\Z}$ and every $s \in
\Z$, $\mathcal{L}_s (M ^{\mu} (r))$ is an irreducible $L_{A_2}
(-\frac{3}{2}\Lambda_0)$--module.

\end{corollary}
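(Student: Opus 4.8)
The plan is to deduce all three statements directly from Theorem~\ref{osnovni-ir-A2}. Recall that that theorem asserts: whenever $U$ is an irreducible $g_{\mu}$--twisted $V$--module carrying a $\Z$--graduation $U=\bigoplus_{j\in\Z}U^{(j)}$ compatible with the graduation $V=\bigoplus_{\ell\in\Z}V^{(\ell)}$ in the sense of (\ref{uvjet-grad}), then $\mathcal{L}_s(U)$ is an irreducible $L_{A_2}(-\tfrac{3}{2}\Lambda_0)$--module for every $s\in\Z$. So it is enough, for $U=M(r)$ (here with the trivial twist $\mu=0$, so that $F_{-1}^{0}=F_{-1}$), $U=M\otimes F^{\mu}$, and $U=M^{\mu}(r)$, to check two things: that $U$ is an irreducible ($g_{\mu}$--twisted) $V$--module, and that $U$ satisfies the grading condition (\ref{uvjet-grad}).

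The irreducibility inputs are already available: for $r\notin\Z$, $M(r)$ is an irreducible $V$--module by Theorem~\ref{ired-relaxed}; for $\mu\notin\tfrac{1}{2}\Z$, $M\otimes F^{\mu}$ is an irreducible $g_{\mu}$--twisted $V$--module by the irreducibility proposition in Section~\ref{twisted}; and for $r\notin\Z$, $\mu\notin\tfrac{1}{2}\Z$, $r-\mu\notin\Z$, the module $M^{\mu}(r)$ is an irreducible $g_{\mu}$--twisted $V$--module by the corresponding theorem in Section~\ref{twisted}. For the grading condition, I would use in each case the eigenspace decomposition for $\delta(0)$. This operator acts semisimply on $V_{L}$ and on the twisted modules in play, hence on each $U$; and, crucially, its spectrum on $U$ lies in a single coset of $\Z$ in $\C$: it lies in $\Z$ for $M(r)\subset V_{L}.e^{-(r+1)(\alpha+\beta)}$ (since $\delta$ is orthogonal to $\alpha$ and $\beta$, so $\delta(0)$ only sees the $F=F_{1}$--factor, where all charges are integral), and in $\mu-1+\Z$ for $M\otimes F^{\mu}$ and $M^{\mu}(r)$, whose lowest components have $\delta(0)$--eigenvalue $\mu-1$. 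Re-indexing the $\delta(0)$--eigenspaces by $\Z$ gives the desired $U=\bigoplus_{j\in\Z}U^{(j)}$; for $M^{\mu}(r)$ this is precisely the $\Z$--graduation fixed at the end of Section~\ref{twisted}. Since $[\delta(0),a_{n}]=(\delta(0)a)_{n}$ for $a\in V$ and since $V^{(\ell)}$ is by definition the $\delta(0)$--eigenspace of eigenvalue $\ell$, the compatibility $V^{(i)}.U^{(j)}\subset U^{(i+j)}$ is automatic, which verifies (\ref{uvjet-grad}).

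With both hypotheses of Theorem~\ref{osnovni-ir-A2} in hand, assertions (1), (2) and (3) follow at once. The only step that is not purely formal is the grading check, specifically confirming that the $\delta(0)$--spectrum of each module occupies a single $\Z$--coset; this is exactly where the arithmetic hypotheses $r\notin\Z$, $\mu\notin\tfrac{1}{2}\Z$, $r-\mu\notin\Z$ intervene, and since these are the same conditions already needed for the irreducibility of the input modules, once they are imposed the corollary is a direct application of Theorem~\ref{osnovni-ir-A2}.
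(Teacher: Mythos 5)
Your proof is correct and takes essentially the same route as the paper, which presents this corollary as a direct application of Theorem~\ref{osnovni-ir-A2} to the irreducible modules $M(r)$, $M\otimes F^{\mu}$ and $M^{\mu}(r)$, with condition (\ref{uvjet-grad}) supplied by the $\delta(0)$--eigenspace graduation that the paper fixes at the end of Section~\ref{twisted}. The only minor imprecision is your remark that the arithmetic hypotheses intervene in the grading check: the single--$\Z$--coset property of the $\delta(0)$--spectrum holds for all parameter values, and those hypotheses are needed only for the irreducibility of the input ($g_{\mu}$--twisted) $V$--modules, but this does not affect the argument.
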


 By construction, the Sugawara Virasoro vector in $L_{A_2} (-\frac{3}{2}\Lambda_0) $ is
\bea && \omega_{A_2}  = \omega -\frac{1}{2} \varphi(-1) ^2 {\bf 1}. \label{sug-a2}  \eea
So in Zhu's algebra we have $[\omega_{A_2}] = [\omega] - 1/2 [\varphi (-1) {\bf 1}]^2 $. Using  this relation and $[e_{\alpha_1} \circ e_{\alpha_2} ] = 0$, we get the following relation in  Zhu's algebra $A(L_{A_2} (-\frac{3}{2} \Lambda_0) )$:
\bea \label{rel-A2} [e] ([\omega_{A_2}] + 1/2 )= 0. \eea
(See also \cite{P} for analysis of Zhu's algebra of this vertex algebra). So we have analogous relations in Zhu's algebras  $A(V)$  and $A(L_{A_2} (-\frac{3}{2} \Lambda_0))$.

Among irreducible modules constructed in Corollary  \ref{ireducibilni-2} we have a family of $\Zp$--graded modules realized on
 the irreducible $\Pi(0) \otimes \widetilde{\Pi}
(0)$--module:
$$\mathcal{L}_0 (M ^{\mu} (r)  ) \cong \Pi(0) \otimes \widetilde{\Pi} (0) .  e^{ \beta - \delta - (r+1) (\alpha + \beta) + \mu (\delta + \varphi) }. $$
From technical reasons our proof  uses assumption that $\mu $ is real. But relation  (\ref{rel-A2}) enables us to present an  alternative proof of irreducibility which works  for complex parameters $\mu$.

\begin{corollary} \label{ireducibilni-3} Assume that $(r, \mu ) \in {\C} ^2$ such that  $r \notin {\Z}$,  $\mu \notin  (\tfrac{1}{2} + {\Z} )$ and $r - \mu \notin {\Z}$. Then  on the  $\Pi(0) \otimes \widetilde{\Pi}
(0)$--module
$$ \mathcal{L}_0 (M ^{\mu} (r)  ) = \Pi(0) \otimes \widetilde{\Pi} (0) .  e^{ \beta - \delta - (r+1) (\alpha + \beta) + \mu (\delta + \varphi) } $$
exists the structure of an irreducible $\Zp$--graded   $L_{A_2}
(-\frac{3}{2}\Lambda_0)$--module.

\end{corollary}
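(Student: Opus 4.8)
The plan is to prove irreducibility directly at the level of the module $\Pi(0)\otimes\widetilde{\Pi}(0).e^{\beta-\delta-(r+1)(\alpha+\beta)+\mu(\delta+\varphi)}$, bypassing the reality assumption on $\mu$ that is needed in Theorem \ref{osnovni-ir-A2}. First I would observe that this module is $\tfrac12\Zp$--graded with respect to the Sugawara $L(0)=\omega_{A_2}(0)$, and that by the explicit lattice description its lowest conformal weight component is spanned by vectors of the form $e^{\beta-\delta-(r+1+i)(\alpha+\beta)+\mu(\delta+\varphi)}$, $i\in\Z$, together with a second family differing by $-2\delta$ (the analogue of the situation in the proof of Theorem \ref{ired-relaxed}). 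I would then compute that on these vectors $e_\theta(0)=e(0)$ shifts the index $i$, while $h_{\alpha_1}(0),h_{\alpha_2}(0)$ act by scalars, so that the lowest component is, as a module for the finite Lie algebra $\mathfrak g$ of type $A_2$, generated by the $E_i$'s and is irreducible precisely because the conditions $r\notin\Z$, $r-\mu\notin\Z$ guarantee that none of the raising/lowering coefficients built from $(r+i)$ and $(r+i-\mu)$ ever vanishes; this is the same mechanism as in $U_{\mu-1,r}$.

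Next I would invoke Zhu's algebra. By Theorem \ref{zhu-corr-mod} and the structure of $A(L_{A_2}(-\tfrac32\Lambda_0))$, any graded submodule of $\mathcal L_0(M^\mu(r))$ not generated from the bottom would have to contain a singular vector of some conformal weight, which, using relation (\ref{rel-A2}), $[e]([\omega_{A_2}]+1/2)=0$, forces that vector to have $L(0)$--eigenvalue $-\tfrac12$ or to be annihilated by $e_\theta(0)$; I would then argue, exactly as in the proof of Theorem \ref{ired-relaxed}, that the only vectors of the relevant conformal weight lying in $\Pi(0)\otimes\widetilde{\Pi}(0).e^{\beta-\delta-(r+1)(\alpha+\beta)+\mu(\delta+\varphi)}$ are in the linear span of the explicit exponentials listed above (and their $-2\delta$--shifts), none of which can generate a proper $L_{A_2}(-\tfrac32\Lambda_0)$--submodule once $r,r-\mu\notin\Z$ and $\mu\notin\tfrac12+\Z$. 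Combining this with the irreducibility of the lowest component from the first step gives irreducibility of the whole module.

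The main obstacle I anticipate is the second step: one must control \emph{all} vectors of low conformal weight inside the larger lattice-type module $\Pi(0)\otimes\widetilde{\Pi}(0).e^{\cdots}$, not merely those of weight $-\tfrac12$, and verify that the hypothetical singular vector cannot be hidden among Heisenberg descendants $\alpha(-1),\beta(-1),\delta(-1),\varphi(-1)$ acting on the exponentials. This requires pinning down the conformal weights of the exponential vectors $e^{\gamma}$ appearing in the module (as quadratic functions of $r$ and $\mu$) and checking that the spacing forbids a genuine $L_{A_2}$--singular vector other than the trivial one; the condition $\mu\notin\tfrac12+\Z$ is exactly what rules out the coincidence that would otherwise produce such a vector. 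Everything else is a routine transcription of the arguments of Theorems \ref{ired-relaxed} and \ref{osnovni-ir-A2}, now with complex $\mu$ permitted because relation (\ref{rel-A2}) is an identity in $A(L_{A_2}(-\tfrac32\Lambda_0))$ independent of any positivity.
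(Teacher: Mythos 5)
Your overall strategy coincides with the paper's: show the lowest graded component is an irreducible $sl_3$--module, then use the Zhu algebra relation (\ref{rel-A2}) together with the arguments of Theorem \ref{ired-relaxed} to rule out submodules not generated from the bottom. However, your description of the lowest component contains a genuine error that would derail the key computation. The lowest component is not a single $\Z$--indexed family of exponentials plus a ``$-2\delta$--shifted'' companion; it is the two--parameter family
$E_{i,j}=e^{\beta-\delta+(-r-1-i)(\alpha+\beta)+(\mu+j)(\delta+\varphi)}$, $i,j\in\Z$, reflecting the rank--two weight lattice of $sl_3$ (these modules have all one--dimensional weight spaces). The $-2\delta$--shifted vectors you import from Theorem \ref{ired-relaxed} live there among the conformal--weight--zero vectors used to exclude hidden singular vectors, not in the lowest component. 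Irreducibility of the bottom layer is not seen from $e_\theta(0)$ and the Cartan action alone (that would only move you along one lattice direction); one must compute all four simple root vectors, which act by
$e_{\alpha_1}(0)E_{i,j}=\tfrac{1}{\sqrt2}E_{i,j+1}$, $e_{\alpha_2}(0)E_{i,j}=\tfrac{1}{\sqrt2}(1-2\mu-2j)E_{i-1,j-1}$, $f_{\alpha_1}(0)E_{i,j}=-\tfrac{1}{\sqrt2}(1-2\mu-2j)(r+i-\mu-j+1)E_{i,j-1}$, $f_{\alpha_2}(0)E_{i,j}=\tfrac{1}{\sqrt2}(r+i+1)E_{i+1,j+1}$.

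This also corrects the role you assign to the hypothesis $\mu\notin\tfrac{1}{2}+\Z$: it is not (primarily) a device for excluding coincidences of conformal weights in the ``second step,'' but is needed already in the first step, since the coefficient $1-2\mu-2j$ of $e_{\alpha_2}(0)$ and $f_{\alpha_1}(0)$ must never vanish; likewise $r\notin\Z$ kills the zeros of $r+i+1$ and $r-\mu\notin\Z$ those of $r+i-\mu-j+1$. Once the bottom layer is correctly identified and shown irreducible in this way, your appeal to (\ref{rel-A2}) and to the weight--zero analysis of Theorem \ref{ired-relaxed} for the remainder of the argument is exactly what the paper does, and your observation that this route works for complex $\mu$ because (\ref{rel-A2}) is an identity in $A(L_{A_2}(-\tfrac{3}{2}\Lambda_0))$ is precisely the point of the corollary.
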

\begin{proof} Let $E_{i, j}:= e ^{ \beta -\delta + (-r-1 -i ) (\alpha + \beta) + (\mu + j)( \delta+ \varphi) }$. Then the lowest component of $\mathcal{L}_0 (M ^{\mu} (r)  )$ is spanned by $\{ E_{i,j} \ \vert \ i, j \in {\Z} \}$. By using a direct calculation we get
\bea
e_{\alpha_1} (0) E_{i , j } & = & \frac{1}{\sqrt{2}} E_{i, j +1},  \nonumber \\
e_{\alpha_2} (0) E_{i, j } & = & \frac{1}{\sqrt{2}} (1- 2  \mu- 2 j )  E_{i-1, j -1 },  \nonumber \\
f_{\alpha_1} (0) E_{i , j} & = & -\frac{1}{\sqrt{2}} (1- 2 \mu -2 j)  (r+ i - \mu  -j +1 ) E_{i, j -1}, \nonumber \\
f_{\alpha_2} (0) E_{i, j } & = & \frac{1}{\sqrt{2}} ( r+ i + 1)  E_{i+1, j +1 }.  \nonumber
\eea
So the lowest component of $ \mathcal{L}_0 (M ^{\mu} (r)  )$ is an irreducible $A_2$--module. Now irreducibility follows easily from relation  (\ref{rel-A2}) in Zhu's algebra $A(L_{A_2} (-\frac{3}{2} \Lambda_0) )$ and similar arguments as in  the proof of Theorem \ref{ired-relaxed}.
\end{proof}

\begin{remark}
 Modules in Corollary \ref{ireducibilni-3} don't belong to the category $\mathcal{O}$. Their lowest components are irreducible $sl_3$--modules having all $1$--dimensional weight spaces (cf. \cite{BL}).
\end{remark}

Finally we have a family of logarithmic modules:

\begin{corollary}
For every $\lambda \in {\C}$
$ \widetilde{\mathcal{SV} } (\lambda) \otimes F_{-1} $ is an logarithmic $L_{A_2}
(-\frac{3}{2}\Lambda_0)$--module.
\end{corollary}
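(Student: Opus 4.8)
The plan is to obtain the module by restriction. By the theorem of Section 9 there is a conformal embedding $L_{A_2}(-\tfrac{3}{2}\Lambda_0)\subset V\otimes F_{-1}$, and by the theorem of Section 8 we have the logarithmic $V$--module $\widetilde{\mathcal{SV}}(\lambda)$. Since $\widetilde{\mathcal{SV}}(\lambda)$ is a weak $V$--module and $F_{-1}$ is a weak module over itself, the tensor product $\widetilde{\mathcal{SV}}(\lambda)\otimes F_{-1}$ carries the usual structure of a weak $V\otimes F_{-1}$--module, with $Y(a\otimes b,z)=\widetilde{Y}_{\widetilde{\mathcal{SV}}(\lambda)}(a,z)\otimes Y_{F_{-1}}(b,z)$; restricting this action along the embedding makes $\widetilde{\mathcal{SV}}(\lambda)\otimes F_{-1}$ a weak $L_{A_2}(-\tfrac{3}{2}\Lambda_0)$--module. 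It then remains to check that the Sugawara operator $L(0)$ acts with a nontrivial nilpotent part, so that the module is logarithmic in the sense of \cite{AdM-2009}.

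For this I would use (\ref{sug-a2}): inside $V\otimes F_{-1}$ the Sugawara vector of $L_{A_2}(-\tfrac{3}{2}\Lambda_0)$ equals $\omega\otimes{\bf 1}+{\bf 1}\otimes\omega_{F_{-1}}$, where $\omega_{F_{-1}}=-\tfrac{1}{2}\varphi(-1)^2{\bf 1}$ is the conformal vector of the lattice vertex superalgebra $F_{-1}$ (recall $\langle\varphi,\varphi\rangle=-1$). Consequently, on $\widetilde{\mathcal{SV}}(\lambda)\otimes F_{-1}$,
$$ L_{A_2}(0)=\widetilde{L(0)}\otimes\mathrm{Id}+\mathrm{Id}\otimes L_{F_{-1}}(0), $$
where $\widetilde{L(0)}$ is the operator of Section 8, which has $\widetilde{L(0)}$--nilpotent rank two, and $L_{F_{-1}}(0)$ is the grading operator of $F_{-1}$, which acts semisimply. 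Writing $\widetilde{L(0)}=S+N$ for its Jordan--Chevalley decomposition ($S$ semisimple, $N$ locally nilpotent, $N^{2}=0$, $[S,N]=0$, $N\neq 0$), the operator $S\otimes\mathrm{Id}+\mathrm{Id}\otimes L_{F_{-1}}(0)$ is semisimple (a sum of two commuting simultaneously diagonalizable operators), $N\otimes\mathrm{Id}$ is locally nilpotent with $(N\otimes\mathrm{Id})^{2}=0$ and $N\otimes\mathrm{Id}\neq 0$, and the two commute. By uniqueness of the Jordan--Chevalley decomposition, $L_{A_2}(0)$ has locally nilpotent part $N\otimes\mathrm{Id}\neq 0$ of square zero; hence $\widetilde{\mathcal{SV}}(\lambda)\otimes F_{-1}$ has $L_{A_2}(0)$--nilpotent rank two and is a logarithmic $L_{A_2}(-\tfrac{3}{2}\Lambda_0)$--module.

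The step that requires care, rather than being a genuine obstacle, is the formal verification that tensoring a deformed (logarithmic) $V$--module with an ordinary $F_{-1}$--module and then restricting along a conformal embedding preserves all the data demanded by the definition of a logarithmic module in \cite{AdM-2009} --- in particular that the generalized $L_{A_2}(0)$--eigenspace grading of $\widetilde{\mathcal{SV}}(\lambda)\otimes F_{-1}$ is compatible with the modes of $L_{A_2}(-\tfrac{3}{2}\Lambda_0)$; this is routine but should be recorded. One could alternatively present the construction via the functor $\mathcal{L}_{s}$ of Theorem \ref{osnovni-ir-A2} applied to $\widetilde{\mathcal{SV}}(\lambda)$ (viewed as an untwisted $V$--module with a suitable $\Z$--grading), but the direct restriction argument above is the most economical.
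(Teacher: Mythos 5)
Your argument is correct and is exactly the one the paper leaves implicit: restrict the weak $V\otimes F_{-1}$--module structure on $\widetilde{\mathcal{SV}}(\lambda)\otimes F_{-1}$ along the embedding $L_{A_2}(-\tfrac{3}{2}\Lambda_0)\subset V\otimes F_{-1}$, and use (\ref{sug-a2}) to see that the Sugawara $L_{A_2}(0)$ acts as $\widetilde{L(0)}\otimes\mathrm{Id}+\mathrm{Id}\otimes L_{F_{-1}}(0)$, whose nilpotent part is inherited from $\widetilde{L(0)}$ and is nonzero of square zero. The Jordan--Chevalley bookkeeping you include is a welcome addition, since the paper states the corollary without proof.
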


\section{ Connection with $\mathcal{W}_{A_2} (p)$--algebras: $p=2$ }

Let $\h = {\C} h_{\alpha_1} + {\C}  h_{\alpha_2}$ be the Cartan subalgebra of $\g =sl_3$.  Then the Heisenberg subalgebra of $L_{A_2}( -\frac{3}{2} \Lambda_0) $ generated by $\h$  is isomorphic to $M_{\h} (1)$. In this section we shall study the parafermion vertex operator algebra $ K(sl_3,-\tfrac{3}{2})= \mbox{Com} (M_{\h }(1), L_{A_2} (-\frac{3}{2}\Lambda_0) )$ and relate it  with vertex algebras  $\mathcal{W}_{A_2} (p)$ appearing in logarithmic conformal field theory.

We consider the lattice
$$ \sqrt{p} A_2
 = {\Z} \gamma_1 + {\Z} \gamma_2, \quad \langle \gamma_1, \gamma_1\rangle = \langle \gamma_2, \gamma_2 \rangle = 2 p,
 \ \langle \gamma_1, \gamma_2 \rangle = - p. $$

Let $M_{\gamma_1, \gamma_2} (1)$ be the standard Heisenberg vertex subalgebra of the lattice vertex algebra
 $V_{ \sqrt{p} A_2}$ generated by the Heisenberg fields $\gamma_1 (z)$ and $\gamma_2 (z)$.

 The vertex algebra $\mathcal{W}_{A_2} (p)$ is defined (cf. \cite{AdM-peking}, \cite{S1} ) as a subalgebra of the lattice vertex algebra $V_{ \sqrt{p} A_2}$ realized as
 $$ \mathcal{W}_{A_2} (p) = \mbox{Ker} _{ V_{ \sqrt{p} A_2} } e^{-\gamma_1 /p } _0 \bigcap
 \mbox{Ker} _{ V_{ \sqrt{p} A_2} } e^{-\gamma_2 /p } _0 . $$
We also have its subalgebra:
$$ \overline{M_{\gamma_1, \gamma_2} (1) } = \mbox{Ker} _{ M_{\gamma_1, \gamma_2} (1) } e^{-\gamma_1 /p } _0 \bigcap
 \mbox{Ker} _{ M_{\gamma_1, \gamma_2} (1) } e^{-\gamma_2 /p } _0   =  W_{A_2} (p) \bigcap M_{\gamma_1, \gamma_2} (1). $$
$\mathcal{W}_{A_2} (p)$ and  $\overline{M_{\gamma_1, \gamma_2} (1) }$ have a  vertex subalgebra isomorphic to the simple
$\mathcal{W}(2,3)$--algebra  with central charge $c_p=2-24 \frac{(p-1) ^{2}}{p}$.

It is expected that $\mathcal{W}_{A_2} (p)$ is a $C_2$--cofinite vertex algebra for $p \ge 2$ and that  it is  a completely reducible
$\mathcal{W}(2,3) \times sl_3$--module. Then as a vertex algebra $\mathcal{W}_{A_2} (p)$ is  strongly generated by
$\mathcal{W}(2,3)$ generators and
by $sl_3 . e ^{-\gamma_1 - \gamma_2 }$, so by $8$ primary fields for the $\mathcal{W}(2,3)$--algebra.

Note that $\mathcal{W}_{A_2} (p) $ is a generalization of the triplet vertex algebra $\mathcal{W}(p)$ and
$ \overline{M_{\gamma_1, \gamma_2} (1) }$ is a generalization of the singlet vertex subalgebra of $\mathcal{W}(p)$.

\bigskip

In the case $p=2$ we shall embed all vertex algebras defined above in the lattice vertex algebra $V_L$. Define
$$\gamma_1 = -2 \alpha, \quad \gamma_2 = \alpha + \beta - 2 \delta. $$
Then $\Z \gamma_1 + \Z \gamma_2 \cong \sqrt{2} A_2$.

\begin{lemma} We have:
$$
 K(sl_3,-\tfrac{3}{2}) \subset  \overline{M_{\gamma_1, \gamma_2} (1) } .$$
\end{lemma}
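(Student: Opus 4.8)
The plan is to show that every element of $K(sl_3,-\tfrac{3}{2})$ is simultaneously annihilated by the two zero-mode screening operators $e^{-\gamma_1/2}_0$ and $e^{-\gamma_2/2}_0$ acting on $M_{\gamma_1,\gamma_2}(1)$, and to identify $K(sl_3,-\tfrac{3}{2})$ as a subalgebra of $M_{\gamma_1,\gamma_2}(1)$ in the first place. First I would check that the Heisenberg vertex subalgebra $M_{\h}(1)$ of $L_{A_2}(-\tfrac{3}{2}\Lambda_0)$ generated by $\h = {\C}h_{\alpha_1}+{\C}h_{\alpha_2}$, under the explicit realization $(\ref{for-h1})$--$(\ref{for-h2})$ inside $V_L$ (with the extra $\varphi$ direction), sits inside the rank-two Heisenberg algebra $M_{\gamma_1,\gamma_2}(1)$ attached to $\gamma_1=-2\alpha$, $\gamma_2=\alpha+\beta-2\delta$; a quick rank count and a comparison of the Gram matrices ($\langle\gamma_i,\gamma_i\rangle=4$, $\langle\gamma_1,\gamma_2\rangle=-2$ versus the $sl_3$ Cartan form at level $-\tfrac32$) should show these two lattices of Heisenberg generators span the same subalgebra up to the normalization $p=2$. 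Consequently $K(sl_3,-\tfrac32)=\mbox{Com}(M_{\h}(1),L_{A_2}(-\tfrac32\Lambda_0))$ is contained in $\mbox{Com}(M_{\gamma_1,\gamma_2}(1),V_L)$, which contains in particular $M_{\gamma_1,\gamma_2}(1)$'s own ``opposite'' Heisenberg directions.

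The second, and main, step is to locate the screening operators. The key observation is that the two screenings $Q = e^{\alpha+\beta-2\delta}_0$ and $\widetilde Q = e^{-\tfrac12(\alpha+\beta-2\delta)}_0$ from Section \ref{real-sl2}, together with the analogous screening in the $\varphi$-direction coming from $F_{-1}=V_{\Z\varphi}$, all commute with the $\widehat{sl_3}$-action on $V\otimes F_{-1}$: $Q$ and $\widetilde Q$ commute with $\widehat{sl_2}$ hence with all the generators $(\ref{for-e13})$--$(\ref{for-h2})$ built from $N=4$ data and $e^{\pm\varphi}$, since $e^{\pm\varphi}$ is not touched by $Q,\widetilde Q$; and the $\varphi$-screening commutes because $\varphi$ is orthogonal to $\delta$, $\alpha$, $\beta$. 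Expressing $e^{-\gamma_1/2}=e^{\alpha}$ and $e^{-\gamma_2/2}=e^{-\tfrac12(\alpha+\beta-2\delta)}$ in terms of the lattice $L$ (plus the $\varphi$-direction), I would show the corresponding zero modes are (up to rescaling and composition with the $\varphi$-screening) precisely the operators that define $\mathcal{W}_{A_2}(2)$, and that each of them commutes with $\h$ — so they act on $K(sl_3,-\tfrac32)$. Then the inclusion $K(sl_3,-\tfrac32)\subset\mbox{Ker}\,e^{-\gamma_1/2}_0\cap\mbox{Ker}\,e^{-\gamma_2/2}_0$ restricted to $M_{\gamma_1,\gamma_2}(1)$ amounts to showing that $K(sl_3,-\tfrac32)$ is actually contained in $M_{\gamma_1,\gamma_2}(1)$ and is annihilated by both screenings.

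To get $K(sl_3,-\tfrac32)\subseteq M_{\gamma_1,\gamma_2}(1)$ I would argue as follows: $K(sl_3,-\tfrac32)$ consists of vectors killed by $h(n)$ for $h\in\h$, $n\ge 0$; using Corollary \ref{characterization} and the realization $L_{A_2}(-\tfrac32\Lambda_0)\subset M\otimes\widetilde\Pi(0)\subset\Pi(0)\otimes\widetilde\Pi(0)$, such vectors have total charge zero under the relevant Heisenberg currents and hence cannot involve any genuine $e^{\lambda(\alpha+\beta)}$ or $e^{m\varphi}$ lattice part beyond what is forced to cancel; a weight/charge bookkeeping argument (the $\delta(0)$, $\varphi(0)$ and $(\alpha+\beta)(0)$ gradings must all vanish on a coset element, but the $\widehat{sl_3}$ weight lattice relative to $\h$ is exactly $\sqrt{2}A_2^{\vee}$) pins the coset down to the Heisenberg subalgebra generated by $\gamma_1,\gamma_2$. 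That $K(sl_3,-\tfrac32)$ lies in the kernel of each $e^{-\gamma_i/2}_0$ then follows because these screening operators are intertwiners that, restricted to $M_{\gamma_1,\gamma_2}(1)$, map the vacuum module into a nontrivial charged module, while any coset element generates (under the screening) a vector which must again be $\h$-invariant of the same conformal weight — and a short argument with the character / conformal weight, exactly as in the proof of Theorem \ref{ired-relaxed}, shows there is no room for such a vector other than $0$.

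\textbf{Main obstacle.} I expect the delicate point to be the precise matching of the two rank-two Heisenberg lattices — i.e. verifying that the image of $M_{\h}(1)$ under the explicit realization genuinely \emph{equals} $M_{\gamma_1,\gamma_2}(1)$ (not merely a sublattice or a rotated copy), including getting the factor $p=2$ normalizations and the signs in $\gamma_1=-2\alpha$, $\gamma_2=\alpha+\beta-2\delta$ to line up with $(\ref{for-h1})$--$(\ref{for-h2})$; once that identification is nailed down, the containment $K(sl_3,-\tfrac32)\subset\overline{M_{\gamma_1,\gamma_2}(1)}$ is essentially the statement that the coset is screening-invariant, which is forced by the commutativity of the screenings with $\widehat{sl_3}$ already established in Section \ref{real-sl2}.
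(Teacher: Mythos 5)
There is a genuine error in your first step, and it propagates through the whole argument. You claim that the image of $M_{\h}(1)$ under (\ref{for-h1})--(\ref{for-h2}) \emph{equals} $M_{\gamma_1,\gamma_2}(1)$, and you flag this identification as the main point to be checked. It is false: with $\gamma_1=-2\alpha$, $\gamma_2=\alpha+\beta-2\delta$ and $h_{\alpha_1}=-\beta-\tfrac32\varphi+\tfrac12\delta$, $h_{\alpha_2}=-\beta+\tfrac32\varphi+\tfrac12\delta$ one computes $\langle h_{\alpha_i},\gamma_j\rangle=0$ for all $i,j$ (e.g. $\langle h_{\alpha_1},\gamma_2\rangle=-\langle\beta,\beta\rangle-\langle\delta,\delta\rangle=1-1=0$), so $\h$ and ${\C}\gamma_1+{\C}\gamma_2$ are \emph{orthogonal complements} inside the rank-four space spanned by $\alpha,\beta,\delta,\varphi$; their Gram matrices are not even proportional ($\langle h_{\alpha_i},h_{\alpha_i}\rangle=-3$ versus $\langle\gamma_i,\gamma_i\rangle=4$). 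Your consequence ``$K(sl_3,-\tfrac32)\subset\mbox{Com}(M_{\gamma_1,\gamma_2}(1),V_L)$'' is therefore the wrong containment: the lemma asserts $K(sl_3,-\tfrac32)\subset M_{\gamma_1,\gamma_2}(1)$, and this holds precisely \emph{because} $\gamma_1,\gamma_2$ span the orthocomplement of $\h$. The correct mechanism, which your third paragraph gropes toward but does not pin down, is the factorization $\Pi(0)\otimes\widetilde{\Pi}(0)\cong{\C}[{\Z}(\alpha+\beta)+{\Z}(\delta+\varphi)]\otimes M_{\h}(1)\otimes M_{\gamma_1,\gamma_2}(1)$ together with $L_{A_2}(-\tfrac32\Lambda_0)\subset\Pi(0)\otimes\widetilde{\Pi}(0)$: a coset vector is killed by $h(0)$, $h\in\h$, which forces trivial charge in the lattice factor, and is killed by $h(n)$, $n>0$, which removes the $M_{\h}(1)$ factor, leaving it inside $1\otimes 1\otimes M_{\gamma_1,\gamma_2}(1)$.

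Your last step is also more complicated than it needs to be, and the ``no room'' argument you sketch for the screenings is not a proof. You correctly identify $e^{-\gamma_1/2}=e^{\alpha}$ and $e^{-\gamma_2/2}=e^{-\frac12(\alpha+\beta-2\delta)}$ (the latter giving $\widetilde{Q}$), but there is no need to argue that a screening applied to a coset element ``has nowhere to go'': both zero modes annihilate the \emph{whole} of $L_{A_2}(-\tfrac32\Lambda_0)$ by construction, since $L_{A_2}(-\tfrac32\Lambda_0)\subset M\otimes\widetilde{\Pi}(0)$ with $M=\mbox{Ker}_{\Pi(0)}e^{\alpha}_0$, and $L_{A_2}(-\tfrac32\Lambda_0)\subset V\otimes F_{-1}$ with $V=\mbox{Ker}_{M\otimes F}\widetilde{Q}$ (Corollary \ref{characterization}). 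Combining this with the containment in $M_{\gamma_1,\gamma_2}(1)$ gives $K(sl_3,-\tfrac32)\subset\mbox{Ker}_{M_{\gamma_1,\gamma_2}(1)}e^{-\gamma_1/2}_0\cap\mbox{Ker}_{M_{\gamma_1,\gamma_2}(1)}e^{-\gamma_2/2}_0=\overline{M_{\gamma_1,\gamma_2}(1)}$ immediately. So the outline of your plan (land in $M_{\gamma_1,\gamma_2}(1)$, then check the two screenings) matches the paper, but the identification $M_{\h}(1)=M_{\gamma_1,\gamma_2}(1)$ must be replaced by orthogonality, and the screening step should be the trivial observation above rather than a character estimate.
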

\begin{proof}
First we notice that:
$$ L(-\frac{3}{2} \Lambda_0) \subset \Pi(0) \otimes \widetilde{\Pi}(0) \cong {\C}[ {\Z} (\alpha + \beta) + {\Z} (\delta + \varphi) ] \otimes
M_{\h  }(1) \otimes M_{\gamma_1, \gamma_2} (1). $$
This implies that
$$ \mbox{Com} (M_{\h  }(1), L_{A_2} (-\frac{3}{2}\Lambda_0) ) \subset  M_{\gamma_1, \gamma_2} (1),  $$
where $ M_{\gamma_1, \gamma_2} (1)$ is identified with the subalgebra $ 1 \otimes 1 \otimes   M_{\gamma_1, \gamma_2} (1)$.
The assertion of the lemma follows from fact that $e ^{ \alpha}_0$ and $\widetilde{Q}$ act trivially on  $L(-\frac{3}{2} \Lambda_0) $.
\end{proof}

By using the following   realization of the Weyl vertex algebra $$ M= \mbox{Ker} _{\Pi (0)} e^{\alpha}_0$$
(see \cite{A-2007}, \cite{efren} for details) we conclude that
$$ \mbox{Ker} _{ M_{\gamma_1, \gamma_2} (1)} e^{-\frac{1}{2} \gamma_1}_0 = M \otimes F \bigcap M_{\gamma_1, \gamma_2} (1).$$

Since $ V = \mbox{Ker}_{M \otimes F} e^{-\frac{1}{2} \gamma_2}_0$, we get
$$ \overline{M_{\gamma_1, \gamma_2} (1) } = V \cap M_{\gamma_1, \gamma_2} (1) \subset V ^{ (0) }  \subset L_{A_2} (-\frac{3}{2}\Lambda_0) ). $$

So we have proved:
\begin{theorem} \label{coset-a2} We have:
$$ \overline{M_{\gamma_1, \gamma_2} (1) } \cong
 K(sl_3,-\tfrac{3}{2}) .$$
\end{theorem}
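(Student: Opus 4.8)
The plan is to combine the inclusions already established with a dimension/character comparison to upgrade the containment $K(sl_3,-\tfrac{3}{2}) \subset \overline{M_{\gamma_1,\gamma_2}(1)}$ (proved in the preceding lemma) to an equality. First I would recall that the preceding lemma gives
$$ K(sl_3,-\tfrac{3}{2}) = \mbox{Com}(M_{\h}(1), L_{A_2}(-\tfrac{3}{2}\Lambda_0)) \subset \overline{M_{\gamma_1,\gamma_2}(1)}, $$
so the only thing remaining is the reverse inclusion. For that I would use the chain of identifications developed just before the theorem: from the realization $M = \mbox{Ker}_{\Pi(0)} e^{\alpha}_0$ one gets
$$ \mbox{Ker}_{M_{\gamma_1,\gamma_2}(1)} e^{-\frac{1}{2}\gamma_1}_0 = (M\otimes F)\cap M_{\gamma_1,\gamma_2}(1), $$
and then, since $V = \mbox{Ker}_{M\otimes F} e^{-\frac{1}{2}\gamma_2}_0$ by Corollary \ref{characterization} (note $\widetilde Q = e^{-\frac{1}{2}\gamma_2}_0$ under the present identification), one obtains
$$ \overline{M_{\gamma_1,\gamma_2}(1)} = V \cap M_{\gamma_1,\gamma_2}(1). $$

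Next I would locate the right-hand side inside $L_{A_2}(-\tfrac{3}{2}\Lambda_0)$. Using the embedding $L_{A_2}(-\tfrac{3}{2}\Lambda_0) \subset M\otimes \widetilde\Pi(0) \subset \Pi(0)\otimes\widetilde\Pi(0)$ from the Corollary following the realization theorem, together with the decomposition $\Pi(0)\otimes\widetilde\Pi(0) \cong {\C}[{\Z}(\alpha+\beta)+{\Z}(\delta+\varphi)]\otimes M_{\h}(1)\otimes M_{\gamma_1,\gamma_2}(1)$, the subalgebra $V\cap M_{\gamma_1,\gamma_2}(1)$ sits inside the degree-zero part $V^{(0)}$, hence inside $L_{A_2}(-\tfrac{3}{2}\Lambda_0)$, and manifestly it is annihilated by $h(n)$ for $h\in\h$, $n\ge 0$ (elements of $M_{\gamma_1,\gamma_2}(1)$ commute with the complementary Heisenberg $M_{\h}(1)$). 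Therefore $V\cap M_{\gamma_1,\gamma_2}(1)\subset \mbox{Com}(M_{\h}(1), L_{A_2}(-\tfrac{3}{2}\Lambda_0)) = K(sl_3,-\tfrac{3}{2})$. Combining this with the lemma gives both inclusions and hence the isomorphism.

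The main obstacle I anticipate is making the bookkeeping of the two commuting Heisenberg factors fully rigorous: one must check that the Heisenberg subalgebra of $L_{A_2}(-\tfrac{3}{2}\Lambda_0)$ generated by $\h$ really corresponds, under the lattice realization, to the ${\C}[{\Z}(\alpha+\beta)+{\Z}(\delta+\varphi)]\otimes M_{\h}(1)$ factor (so that its commutant is exactly the $M_{\gamma_1,\gamma_2}(1)$ factor intersected with $L_{A_2}$), and that the screening operators $e^{-\frac{1}{2}\gamma_1}_0$, $e^{-\frac{1}{2}\gamma_2}_0$ genuinely restrict from $V_L$ to the operators $e^{\alpha}_0$ (up to the change of lattice basis) and $\widetilde Q$ appearing in $M = \mbox{Ker}_{\Pi(0)} e^{\alpha}_0$ and $V = \mbox{Ker}_{M\otimes F}\widetilde Q$. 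Once these identifications of Heisenberg factors and screenings are confirmed, the argument is a short chain of kernel and intersection manipulations with no further analytic input.
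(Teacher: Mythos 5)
Your argument is correct and is essentially the paper's own proof: the forward inclusion comes from the preceding lemma, and the reverse inclusion from the chain $\overline{M_{\gamma_1,\gamma_2}(1)} = \mbox{Ker}_{M_{\gamma_1,\gamma_2}(1)}e^{-\frac{1}{2}\gamma_1}_0 \cap \mbox{Ker}\,\widetilde{Q} = V\cap M_{\gamma_1,\gamma_2}(1)\subset V^{(0)}\subset L_{A_2}(-\tfrac{3}{2}\Lambda_0)$, which then lands in the commutant of $M_{\h}(1)$ because it lies in the $M_{\gamma_1,\gamma_2}(1)$ factor. You merely make explicit the final commutant step and the identification of the screenings, which the paper leaves implicit.
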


\begin{remark}
 In the case $g = sl_2$ we have $K(sl_2, -\frac{1}{2}) \cong W(2,3)_{-2}$  (cf. \cite{R}, \cite{W}) and  $K(sl_2, -\frac{4}{3}) \cong W(2,5)_{-7}$ (cf. \cite{A-JPAA}) where
  $W(2,3)_{-2}$ and  $ W(2,5)_{-7}$  are singlet vertex algebras realized as subalgebras of  triplet vertex algebras $\mathcal{W}(p)$ for $p=2,3$. Theorem \ref{coset-a2} shows that for admissible vertex algebras associated to $\widehat{sl_3}$ at level $k=-3/2$ we have  interpretation of the coset $K(sl_3,k)$ in the framework of vertex algebras which are  higher rank generalizations of the triplet vertex algebras.
\end{remark}

\section{Generalizations and future work}

We  shall discuss some possible generalizations of the present work.

Bx Corollary \ref{characterization} we have that the simple $N=4$ vertex superalgebra with $c=-9$ $L_{c} ^{N=4}$ (denoted  here  by $V$ )  is isomorphic to
$\mbox{Ker}_{  M \otimes F } \widetilde{Q}. $
  We have seen that in the case $k+2 =\frac{1}{p}$
affine vertex algebra $L_{A_1} (k \Lambda_0)$  is realized  inside of the  generalized vertex superalgebra $M \otimes F_{p/2}$ where
$F_{p/2} = V_{ {\Z} \frac{p}{2} \delta }$ and we have introduced the following (generalized) vertex algebras:
$$ {\mathcal V} ^{(p)} = \mbox{Ker}_{  M \otimes F_{p/2} } \widetilde{Q}. $$ We conjecture  that in this case ${\mathcal V}  ^{(p)}$ is strongly generated by
generators $e,f,h$ of  $L_{A_1} (k \Lambda_0)$ and
\bea  && \tau_{(p)} ^+  = e^{ \frac{p}{2} \delta}, \nonumber \\
&&  \overline{\tau}_{(p)} ^+  = Q  e^{ \frac{p}{2} \delta},  \nonumber \\
&&     {\tau}_{(p)} ^- = f(0) e^{ \frac{p}{2} \delta}, \nonumber \\
&&       \overline{\tau}_{(p)} ^- = -f(0)  Q e^{ \frac{p}{2} \delta}. \nonumber \eea

\begin{remark}
Drinfeld-Sokolov functor sends $L_{A_1} (k \Lambda_0)$ to the simple Virasoro vertex algebra $L(c_{1,p}, 0)$ with central charge of $(1,p)$--models. This suggests that  ${\mathcal V}^{(p)}$ is mapped to the doublet vertex algebra $\mathcal{A}(p)$ and that a $\Z_2$ orbifold of ${\mathcal V} ^{(p)}$ is naturally mapped to the triplet vertex algebra $\mathcal{W}(p)$. These  vertex algebras  can be considered as logarithmic extensions of $L_{A_1} (k  \Lambda_0)$. It is expected that their representation-categories are connected  with Nichols algebras studied by A. M.  Semikhatov and I. Yu Tipunin \cite{ST}.
\end{remark}

Based on the case $p=2$ we expect that the following conjecture holds:

\begin{conjecture}
For every $p \ge 3$ ${\mathcal V} ^{(p)}$ has finitely many irreducible modules in the category $\mathcal{O}$. There exists non-semisimple $ {\mathcal V} ^{(p)}$--modules from the category $\mathcal{O}$.
\end{conjecture}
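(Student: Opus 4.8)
The natural strategy is to generalize, step by step, the analysis carried out for $p=2$, where ${\mathcal V}^{(2)}=V=L_c^{N=4}$ with $c=-9$. The first ingredient is the strong generation statement conjectured above: that ${\mathcal V}^{(p)}$ is strongly generated by the $\widehat{sl_2}$--generators $e,f,h$ of $L_{A_1}(k\Lambda_0)$ (with $k+2=1/p$) together with $\tau_{(p)}^{\pm}$ and $\overline{\tau}_{(p)}^{\pm}$. Granting it, I would compute Zhu's algebra $A({\mathcal V}^{(p)})$ exactly as in the proof of Proposition \ref{zhu-str}: the four fields $\tau_{(p)}^{\pm},\overline{\tau}_{(p)}^{\pm}$ lie in the $\tfrac{p}{2}\delta$--shifted sector, so one expects that in the appropriate (possibly $\sigma$--twisted) Zhu algebra their images vanish, whence $A({\mathcal V}^{(p)})$ is generated by $[e],[f],[h]$ subject to the $sl_2$ relations together with finitely many further relations: (i) those coming from the fact that $e,f,h$ generate the \emph{admissible} vertex algebra $L_{A_1}(k\Lambda_0)$, whose Zhu algebra is $U(sl_2)$ modulo the ideal generated by the image of the affine singular vector and carries only finitely many irreducible modules; and (ii) an analogue of Lemma \ref{lemma-c}, extracted by expanding the products $\overline{\tau}_{(p)}^{\pm}{}_n\,\tau_{(p)}^{\pm}$ inside $L_{A_1}(k\Lambda_0)$ and reducing modulo $O({\mathcal V}^{(p)})$. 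Because such a quotient of $U(sl_2)$ has finitely many irreducible highest--weight modules, Theorem \ref{zhu-corr-mod} yields finitely many irreducible $\tfrac{1}{2}\Zp$--graded --- hence category $\mathcal O$ --- modules for ${\mathcal V}^{(p)}$.

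For the non--semisimplicity claim I would exhibit an explicit reducible indecomposable object of category $\mathcal O$, mimicking the $p=2$ picture in which $M\otimes F$ is an indecomposable $V$--module with submodule $SC\Lambda(1)=V$ and quotient $SC\Pi(1)$. The ambient space $M\otimes F_{p/2}$ is itself a ${\mathcal V}^{(p)}$--module, it is $\tfrac{1}{2}\Zp$--graded with finite--dimensional lowest component --- a highest--weight $sl_2$--module built from the vectors $e^{j\frac{p}{2}\delta}$ --- so it lies in $\mathcal O$, and, by the analogue of Corollary \ref{characterization}, ${\mathcal V}^{(p)}=\mbox{Ker}_{M\otimes F_{p/2}}\widetilde Q$ is a proper submodule. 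Thus
$$ 0\longrightarrow {\mathcal V}^{(p)}\longrightarrow M\otimes F_{p/2}\longrightarrow \widetilde Q\bigl(M\otimes F_{p/2}\bigr)\longrightarrow 0 $$
is a short exact sequence of category $\mathcal O$ modules, and it does not split because its lowest--weight space is an indecomposable non--simple $A({\mathcal V}^{(p)})$--module, which one checks directly from the $sl_2$--action on the span of $\{e^{j\frac{p}{2}\delta}\}$. Alternatively, a deformation by Li's $\Delta$--operator as in Proposition \ref{novi-moduli}, applied along the lines of the logarithmic construction used for $p=2$, produces further non--split self--extensions within $\mathcal O$.

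The principal obstacle is the strong generation conjecture itself, which governs $A({\mathcal V}^{(p)})$; without it the count of $\mathcal O$--modules is out of reach. Even assuming it, step (ii) is genuinely hard: the affine singular vectors of $L_{A_1}(k\Lambda_0)$ and the products $\overline{\tau}_{(p)}^{\pm}{}_n\,\tau_{(p)}^{\pm}$ grow combinatorially with $p$, and one needs at least one clean relation of the type $[e]\bigl([\omega]+c_p\bigr)=0$ to pin down the precise quotient of $U(sl_2)$ and rule out spurious highest--weight modules. Finally, verifying non--splitness of the displayed sequence for every $p\ge3$ requires understanding $(M\otimes F_{p/2})(0)$ as an $A({\mathcal V}^{(p)})$--module well enough to see it is \emph{not} a direct sum of the irreducibles classified in the first step. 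An independent check could come from the Drinfeld--Sokolov side, transporting the (conjectural) structure of the doublet algebra $\mathcal A(p)$ and the triplet algebra $\mathcal W(p)$ back to ${\mathcal V}^{(p)}$.
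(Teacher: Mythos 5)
The statement you are asked to prove is stated in the paper as a \emph{conjecture}; the author offers no proof, only the remark that it is expected ``based on the case $p=2$''. So there is no paper proof to compare against, and the real question is whether your proposal closes the gap. It does not: it is a strategy outline whose every essential step is conditional. The finiteness claim rests on (a) the strong generation of ${\mathcal V}^{(p)}$ by $e,f,h,\tau_{(p)}^{\pm},\overline{\tau}_{(p)}^{\pm}$, which the paper itself only conjectures, (b) the vanishing of $[\tau_{(p)}^{\pm}],[\overline{\tau}_{(p)}^{\pm}]$ in the appropriate Zhu algebra, which for odd $p$ is delicate because $M\otimes F_{p/2}$ and ${\mathcal V}^{(p)}$ are then only \emph{generalized} vertex algebras (the paper notes ${\mathcal V}^{(p)}$ is a vertex superalgebra only for even $p$), so the twisted Zhu machinery of Section 2 does not apply verbatim, and (c) a $p$--analogue of Lemma 5.2 producing a relation of the form $[e]([\omega]+c_p)=0$, which you correctly flag as genuinely hard and do not supply. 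Granting (a) and (b) you would at least get that $A({\mathcal V}^{(p)})$ is a quotient of $A(L_{A_1}(k\Lambda_0))$ for the admissible level $k=1/p-2$, and finiteness of highest--weight modules would then follow from the known structure of that Zhu algebra without needing (c); making that reduction explicit would strengthen your first step, but it still hangs on the unproven generation statement.

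For non--semisimplicity, the candidate $0\to{\mathcal V}^{(p)}\to M\otimes F_{p/2}\to \mathrm{coker}\to 0$ is the right analogue of the $p=2$ picture, but you have not verified that $\widetilde Q$ is nonzero on $M\otimes F_{p/2}$ for $p\ge 3$ (so that the inclusion is proper), nor that the sequence fails to split; your stated reason --- that the lowest component is an indecomposable non--simple $A({\mathcal V}^{(p)})$--module --- is asserted, not checked, and the parenthetical claim that this lowest component is finite--dimensional is false already for $p=2$, where $(M\otimes F)(0)\cong U_{-1}$ is the infinite--dimensional irreducible highest--weight module of highest weight $-1$. In short, your proposal is a reasonable research plan consistent with why the author left this as a conjecture, but it is not a proof, and you are candid about exactly the points where it fails to be one.
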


In \cite{A-JPAA}, we studied relations between admissible affine vertex algebra $L_{A_1} (-\frac{4}{3} \Lambda_0 )$ and vertex algebras associated to $(1,3)$--models for the Virasoro algebra (singlet, doublet and triplet vertex algebras). Some constructions of  \cite{A-JPAA} were  generalized (mostly conjecturally) in \cite{CRW} where the authors found a connection between  $(1,p)$ models and Feigin-Semikhatov $W$-algebras $W_2 ^{(n)}$.

 In our case the realization of the admissible simple affine vertex operator algebra $L_{A_2} (- \frac{3}{2} \Lambda_0)$ also admits a natural  generalization. Let $F_{-p/2}$ denotes the generalized lattice vertex algebra associated to the lattice $\Z  (\frac{p}{2} \varphi)$ such that
 $$ \la \varphi , \varphi \ra = - \frac{2}{p}. $$

Let   $\mathcal{R} ^{ (p) }$ by the subalgebra of $ {\mathcal V} ^{(p)} \otimes F_{-p/2}$ generated by
$ x= x(-1) {\bf 1} \otimes 1$, $x \in \{ e, f, h \} $, $1 \otimes \varphi(-1) {\bf 1}$  and

\bea
e_{\alpha_1,p} &:=&  \frac{1}{\sqrt{2}}\  \tau^{+}_{(p)}  \otimes e^{ \tfrac{p}{2}\varphi} \label{for-e12-p} \\
f_{\alpha_1,p }  &:=& \frac{1}{\sqrt{2}} \  \overline{\tau} ^{-}_{(p)}  \otimes e^{- \tfrac{p}{2} \varphi} \label{for-f12-p} \\
e_{\alpha_2,p }  &:=& \frac{1}{\sqrt{2}} \ \overline{\tau} ^{+} _{(p)} \otimes e^{- \tfrac{p}{2} \varphi} \label{for-e23-p} \\
f_{\alpha_2,p }&:=& \frac{1}{\sqrt{2}}\ {\tau} ^{-} _{(p) } \otimes e^{\tfrac{p}{2} \varphi} \label{for-f23-p}
\eea
Clearly, $\mathcal{R} ^{(2)} \cong L_{A_2} (-\frac{3}{2} \Lambda_0). $ In general, $\mathcal{R} ^{(p)}$ is an extension of
$$ L_{A_1} ( (\tfrac{1}{p}   - 2) \Lambda_0) \otimes M_{\varphi} (1) $$ by $4$ fields of conformal weight  $p/2$.

We believe that $\mathcal{R} ^{(p)}$  for $p \ge 3$ is also part of a series of generically existing vertex (super)algebras.

In order to present some evidence for this statement, we consider the universal  affine $\mathcal{W}$ algebras $\mathcal{W} ^k ({\g}, f_{\theta})$ associated with
$({\g}, f_{\theta})$ where $\g$ is a simple Lie algebra and  $f_{\theta}$ is a root vector associated to the  lowest root -$\theta$. Let  $\mathcal{W} _k ({\g}, f_{\theta})$ be its simple quotient.
Let ${\g} = sl_4$. Then by Proposition 4.1. of \cite{KWR}, $\mathcal{W}^k (sl_4, f_{\theta})$ is generated by $4$ four vectors  of conformal weight  one which generate affine vertex algebra associated to  $ \widehat{gl_2}$ at level $k +1$, Virasoro vector  and four even vectors of conformal weight $3/2$. By using concepts from \cite{AP} (slightly generalized for $\mathcal{W}$--algebras) one can easily show that there is a conformal embedding of $L_{gl(2) } (-\frac{5}{3} \Lambda_0)$ into simple vertex algebra $\mathcal{W}_k (sl_4, f_{\theta})$. Therefore $\mathcal{W}_k (sl_4, f_{\theta})$ for $k=-8/3$ is also an extension of $$ L_{A_1} (-\frac{5}{3} \Lambda_0) \otimes M_{\varphi} (1) $$
by four fields of conformal weight $3/2$.

This supports  the following conjecture:

\begin{conjecture} We have:
$$  \mathcal{R} ^{(3)} \cong \mathcal{W}_k (sl_4, f_{\theta})  \quad \mbox{for} \ k = - \frac{8}{3}. $$
\end{conjecture}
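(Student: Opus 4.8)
\medskip

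\noindent The plan is to exhibit both $\mathcal{R}^{(3)}$ and $\mathcal{W}_{-8/3}(sl_4,f_\theta)$ as extensions of one and the same vertex algebra and to realise the conjectured isomorphism by a single explicit homomorphism. As recalled above, the conformal embedding $L_{gl_2}(-\tfrac53\Lambda_0)\hookrightarrow\mathcal{W}_{-8/3}(sl_4,f_\theta)$ together with Proposition 4.1 of \cite{KWR} shows that $\mathcal{W}_{-8/3}(sl_4,f_\theta)$ is an extension of $L_{A_1}(-\tfrac53\Lambda_0)\otimes M_\varphi(1)$ by four fields of conformal weight $\tfrac32$, and (by its very definition) so is $\mathcal{R}^{(3)}$, with the four fields given by $e_{\alpha_1,3},f_{\alpha_1,3},e_{\alpha_2,3},f_{\alpha_2,3}$ from (\ref{for-e12-p})--(\ref{for-f23-p}). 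It therefore suffices to match these two extensions.

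First I would construct a vertex algebra homomorphism
$$\Phi:\ \mathcal{W}^{k}(sl_4,f_\theta)\ \longrightarrow\ \mathcal{V}^{(3)}\otimes F_{-3/2},\qquad k=-\tfrac83,$$
sending the four $gl_2$--currents of $\mathcal{W}^k(sl_4,f_\theta)$ to suitably normalised copies of $e,h,f$ and $\varphi(-1){\bf 1}$, the Virasoro field to $\omega_{(3)}=\omega^{\mathrm{Sug}}-\tfrac12\varphi(-1)^2{\bf 1}$ (where $\omega^{\mathrm{Sug}}$ is the Sugawara vector of $L_{A_1}(-\tfrac53\Lambda_0)$; this is the $p=3$ analogue of $\omega_{A_2}$ in (\ref{sug-a2})), and the four weight--$\tfrac32$ generators to $e_{\alpha_1,3},f_{\alpha_1,3},e_{\alpha_2,3},f_{\alpha_2,3}$. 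A direct computation in the lattice realisation gives the conformal weights and $gl_2$--weights of these images, and one checks they coincide with the data dictated by \cite{KWR}, so that $\Phi$ is a legitimate candidate. To upgrade it to a homomorphism one must verify the remaining operator product expansions of $\mathcal{W}^k(sl_4,f_\theta)$ on the images, using that $Q$ acts as a derivation on $\mathcal{V}^{(3)}$, that $\overline{\tau}^+_{(3)}=Q\,\tau^+_{(3)}$, the $p=3$ analogue of Lemma \ref{lemma-c}, and the structure of $U(\widehat{sl_2}).\tau^+_{(3)}$; the factors $\tfrac1{\sqrt2}$ in (\ref{for-e12-p})--(\ref{for-f23-p}) are chosen precisely so that the structure constants come out as in $\mathcal{W}^k(sl_4,f_\theta)$.

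Once $\Phi$ is known to be a nonzero homomorphism, I would finish as in the $p=2$ case. Arguing with the $(\delta+\varphi)(0)$--grading exactly as for $V\otimes F_{-1}$ and in the proof of Theorem \ref{simplicity}, one shows that $\mathcal{R}^{(3)}$ is a simple vertex algebra. Since the image of $\Phi$ contains all the generators above, it equals $\mathcal{R}^{(3)}$ and is therefore simple; hence $\ker\Phi$ is the (unique) maximal ideal of $\mathcal{W}^k(sl_4,f_\theta)$, and $\Phi$ induces the desired isomorphism $\mathcal{W}_{-8/3}(sl_4,f_\theta)\cong\mathcal{R}^{(3)}$.

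The main obstacle is the operator product verification in the second step: the $\lambda$--brackets among the four weight--$\tfrac32$ generators of $\mathcal{W}^k(sl_4,f_\theta)$ have intricate, $k$--dependent structure constants, and matching them with the lattice computation of the brackets among $\tau^\pm_{(3)},\overline{\tau}^\pm_{(3)}$ relies on a good description of $\mathcal{V}^{(3)}$ — so a complete proof would first have to establish the conjecture that $\mathcal{V}^{(3)}$ is strongly generated by $e,f,h$ and $\tau^\pm_{(3)},\overline{\tau}^\pm_{(3)}$. A way to soften this is an abstract uniqueness argument: if the module generated over $L_{A_1}(-\tfrac53\Lambda_0)\otimes M_\varphi(1)$ by the weight--$\tfrac32$ fields is a simple current (which can in principle be read off from the fusion rules of $L_{A_1}(-\tfrac53\Lambda_0)$--modules together with the lattice data), then both $\mathcal{R}^{(3)}$ and $\mathcal{W}_{-8/3}(sl_4,f_\theta)$ are the unique simple--current extension of $L_{A_1}(-\tfrac53\Lambda_0)\otimes M_\varphi(1)$ by that module and the isomorphism follows with no computation of structure constants; the risk is that this module need not be a simple current, in which case one has to invoke the finer rigidity of the relevant module category.
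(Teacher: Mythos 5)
This statement is stated in the paper as a \emph{conjecture}: the author offers no proof, only the supporting observation (via Proposition 4.1 of \cite{KWR} and a conformal embedding of $L_{gl(2)}(-\tfrac{5}{3}\Lambda_0)$ into $\mathcal{W}_{-8/3}(sl_4,f_{\theta})$) that both sides are extensions of $L_{A_1}(-\tfrac{5}{3}\Lambda_0)\otimes M_{\varphi}(1)$ by four fields of conformal weight $\tfrac{3}{2}$. Your first step reproduces exactly this evidence, and your overall strategy (build an explicit homomorphism from the universal $\mathcal{W}$--algebra, show the image is simple, conclude it factors through the simple quotient) is the natural one. But what you have written is a program, not a proof, and you say so yourself: the decisive step --- matching the $\lambda$--brackets of the four weight--$\tfrac{3}{2}$ generators of $\mathcal{W}^{-8/3}(sl_4,f_{\theta})$ with those of $\tau^{\pm}_{(3)},\overline{\tau}^{\pm}_{(3)}$ --- is not carried out, and it rests on the paper's \emph{other} open conjecture that $\mathcal{V}^{(3)}$ is strongly generated by $e,f,h$ and these four vectors. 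Until that computation (or the strong-generation statement) is in hand, no homomorphism $\Phi$ has been produced.

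Two further points deserve care. First, the simplicity of $\mathcal{R}^{(3)}$ is not a formal consequence of the $p=2$ argument: for $p=2$ simplicity of $V$ was proved via an explicit description of the Zhu algebra $A(V)$ of the $N=4$ algebra, and simplicity of $V\otimes F_{-1}$ then used the $(\delta+\varphi)(0)$--grading together with the identification of the degree-zero component with the span of the generators. For $p=3$ the ambient object $\mathcal{V}^{(3)}\otimes F_{-3/2}$ is only a \emph{generalized} vertex algebra (the paper notes $\mathcal{V}^{(p)}$ is a vertex superalgebra only for $p$ even), its simplicity is not established, and no Zhu-algebra computation is available; so ``arguing exactly as for $V\otimes F_{-1}$'' is an assertion, not an argument. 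Second, your fallback via uniqueness of simple-current extensions has the same status: whether the weight--$\tfrac{3}{2}$ module is a simple current for $L_{A_1}(-\tfrac{5}{3}\Lambda_0)\otimes M_{\varphi}(1)$, and whether the relevant category is rigid enough for the uniqueness theorem, are both unverified. In short, your proposal is a reasonable roadmap that is consistent with (and slightly more detailed than) the heuristic the paper gives, but it does not close the conjecture, and you should present it as such.
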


It is clear that these vertex algebras also admit logarithmic representations and have interesting fusion rules. We plan to investigate the representation  theory of these vertex algebras in our forthcoming papers.

\end{document}